\newtheoremstyle{custom}
  {3pt}
  {3pt}
  {\slshape}
  {}
  {\bfseries}
  {.}
  { }
   {}
\theoremstyle{custom}
\newtheorem{theorem}{Theorem}[section]
\newtheorem{proposition}[theorem]{Proposition}
\newtheorem{proposition/definition}[theorem]{Proposition/Definition}
\newtheorem{lemma}[theorem]{Lemma}
\newtheorem{corollary}[theorem]{Corollary}
\theoremstyle{definition}
\newtheorem{example}[theorem]{Example}
\theoremstyle{remark}
\newtheorem{remark}[theorem]{Remark}
\newtheoremstyle{exercise}
  {3pt}
  {6pt}
  {}
  {}
  {\bfseries}
  {:}
  { }
   {}
\theoremstyle{exercise}
\newtheorem{exercise}[theorem]{Exercise}
\newtheoremstyle{exercises}
  {3pt}
  {6pt}
  {}
  {}
  {\bfseries}
  {:}
  {\newline}
   {}
\theoremstyle{exercise}
\newtheorem{exercises}[theorem]{Exercises}
\def\boxit#1{\vbox{\hrule height1pt\hbox{\vrule width1pt\kern3pt
  \vbox{\kern3pt#1\kern3pt}\kern3pt\vrule width1pt}\hrule height1pt}}
\def\BC{\mathbb C}\def\BF{\mathbb F}\def\BO{\mathbb O}\def\BS{\mathbb S}
\def\BP{\mathbb P}
\def\pp#1{\mathbb P^{#1}}
\def\pp#1{{\mathbb P}^{#1}}
\def\tdim{{\rm dim}}
\def\hd{,...,}
\def\ww{\wedge}
\def\cE{{\mathcal E}}
\def\cS{{\mathcal S}}
\def\CC{\mathbb C}
\def\11{\mathbf 1}
\def\PP{\mathbb P}
\def\FF{\mathbb F}
\def\fg{{\mathfrak g}}
\def\l{\lambda}
\def\a{\alpha}
\def\o{\omega}
\def\b{\beta}
\def\s{\sigma}
\def\ot{{\mathord{ \otimes } }}
\def\op{{\mathord{\,\oplus }\,}}
\def\otc{{\mathord{\otimes\cdots\otimes}\;}}
\def\ra{{\mathord{\;\rightarrow\;}}}
\def\dim{{\rm dim}\;}
\def\op{\oplus}
\def\BO{\mathbb{O}}\def\BF{\mathbb{F}}
\def\op{\oplus}
\def\s{\sigma}
\def\t{\tau}
\def\a{\alpha}
\def\b{\beta}
\def\l{\lambda}
\def\ol{\overline}
\def\BP{\mathbb  P}
\def\BC{\mathbb  C}
\def\pp#1{\mathbb  P^{#1}}
\def\tcodim{\text{codim}}
\def\BS{\mathbb  S}
\def\fg{\mathfrak  g}
\def\hd{, \hdots ,}
\def\pp#1{\mathbb  P^{#1}}
\def\brank{\underline {\mathbf{ R}}}
\def\ur{\brank}
\def\ra{\rightarrow}
\def\tdet{\operatorname{det}}
\def\tim{\operatorname{Image}}
\def\tdim{\operatorname{dim}}
\def\tlim{\lim}
\def\tmod{\operatorname{mod}}
\def\ud#1{\underline d^{#1}}
\def\ww{\wedge}
\def\ctimes{\times \cdots\times}
\def\be{\begin{equation}}
\def\ene{\end{equation}}
\def\aaa{{\mathbf{a}}}
\def\bbb{{\mathbf{b}}}
\def\ccc{{\mathbf{c}}}
\newcommand{\Id}{\operatorname{Id}}
\newcommand{\Spec}{\operatorname{Spec}}
\numberwithin{equation}{section}
\numberwithin{theorem}{section}
\newenvironment{red}{\color{red}}{}
\newcommand{\bred}{\begin{red}}
\newcommand{\ered}{\end{red}}
\newenvironment{blue}{\color{blue}}{}
\newcommand{\bblue}{\begin{blue}}
\newcommand{\eblue}{\end{blue}}
\newenvironment{green}{\color{green}}{}
\newcommand{\bgreen}{\begin{green}}
\newcommand{\egreen}{\end{green}}
\def\set#1{\left\{#1\right\}}
\def\fromto#1#2{#1, \dotsc, #2}
\def\setfromto#1#2{\set{\fromto{#1}{#2}}}
\def\ud{\mathrm{d}}
\newcommand{\JaBu}{Jaros\L{}aw Buczy\'n{}ski}
\newcommand{\shortJaBu}{J.~Buczy\'n{}ski}
\DeclareMathOperator{\Pf}{Pf}
\DeclareMathOperator*{\Times}{\times}
\newcommand{\Wedge}[1]{{\textstyle{\bigwedge\nolimits}^{\! #1}}}
\newcommand{\basept}{o}
\newcommand{\baseptaff}{\bar{\basept}}
\newcommand{\baseptline}{\hat{\basept}}
\newcommand{\prj}[1]{{\tilde #1}{}}
\newcommand{\inV}[1]{{#1}}
\newcommand{\nrml}[1]{{#1_N}}
\newcommand{\ccI}{{\mathcal{I}}}
\newcommand{\ccJ}{{\mathcal{J}}}
\newcommand{\ccT}{{\mathcal{T}}}
\renewcommand{\theenumi}{(\roman{enumi})}
\renewcommand{\labelenumi}{\theenumi}
\newcommand{\ptstypeii}{J(X,\t(X))}
\newcommand{\ptstypeiii}{\operatorname{Osc}(X)}
\newcommand{\ptstypeiv}{\operatorname{Z}(X)}
\title{On the third secant variety}
\author[\shortJaBu{} \& J.M.~Landsberg]{\JaBu \and  J.M. Landsberg}
\begin{document}

\begin{abstract}
We determine normal forms and ranks of tensors
of border rank at most three. We present a differential-geometric
analysis of limits of secant planes in a more general context. In particular
there are at most four types of points on limiting trisecant planes for cominuscule varieties such
as Grassmannians. We also show the singular locus of the secant varietites $\s_r(Seg(\pp n\times \pp m\times \pp q))$
has codimension at least two for $r=2,3$.
\end{abstract}
 \thanks{\shortJaBu{} supported by Marie Curie Outgoing Fellowship ``Contact Manifolds'',
         Landsberg supported by NSF grant   DMS-1006353}
\email{jabu@mimuw.edu.pl, jml@math.tamu.edu}
\address{Institut Fourier \\ Universit\'e Grenoble I \\
100 rue des Maths, BP 74\\
38402 St Martin d'H\`eres cedex, France\\
and
Institute of Mathematics of the
Polish Academy of Sciences\\
ul. \'Sniadeckich 8\\
P.O. Box 21\\
00-956 Warszawa, Poland}

\address{Department of Mathematics\\
Texas A\&M University\\
Mailstop 3368\\
College Station, TX 77843-3368, USA}

\maketitle

\section{Introduction}

Throughout the paper we work over the field of complex numbers $\BC$.

Motivated by applications, there has been a considerable amount of recent research on ranks
 and border ranks of tensors, see, e.g., \cite{Ltensorbook,MR2535056} and
references therein.
In signal processing one is interested in
determining ranks of tensors, see, e.g., \cite{Como02:oxford} and
references therein. In computational complexity, one looks
for exotic algorithms via limits of tensors of a given rank,
 see \cite{MR623057}. There are adequate tests
to determine the border ranks of tensors of small
border rank, however the possible ranks
of such tensors are not well understood.
In this article we present normal forms  for tensors of
border rank three. Already in this case the problem becomes subtle. We work in
the more general setting of secant varieties.

\subsection{Definitions, notational conventions} For a projective variety $X\subset \BP V$ not contained in
a hyperplane, the
\emph{$X$-rank} of $p\in \BP V$, $R_X(p)$,  is defined to be the smallest $r$ such that
there exist $x_1\hd x_r\in X$ such that $p$ is in the span of
of  $x_1\hd x_r$,
and the \emph{$X$-border rank} $\ur_X(p)$ is defined to be the smallest
$r$ such that there exist curves $x_1(t)\hd x_r(t)\in X$ such that $p$ is in the span of
 the limiting plane  $\tlim_{t\ra 0}\langle  x_1(t)\hd x_r(t)\rangle$.
Let $\s_r(X)\subset \BP V$ denote the set of points of
$X$-border rank at most $r$. When $X=Seg(\BP A_1\ctimes \BP A_n)
\subset \BP (A_1\otc A_n)$ is the set of rank one tensors in a space
of tensors, the $X$-rank and border rank agree with the usual
notions of tensor rank and border rank.
The set of points of $X$-rank   $r$
contains a Zariski open subset of $\s_r(X)$ and we are interested
in the complement of this set.

 We let  $\s_r^0(X)$ denote the points of $\s_r(X)$ of
rank $r$. The tangential variety of a smooth variety $X\subset \BP V$, $\t(X)\subset \BP V$, consists of all points
on all embedded tangent $\pp 1$'s. For varieties $X,Y\subset \BP V$, define
$$J(X,Y):=\ol{\{ p\in \BP V\mid \exists x\in X,\  y\in Y {\rm such\ that\ } p\in
\langle x,y\rangle\} },
$$
the {\it join} of $X$ and $Y$. Note that $J(X,X)=\s_2(X)$.
For a set $Z\subset \BP V$, $\hat Z\subset V$ denotes
the cone over it and $\langle Z\rangle$ its linear span. For a variety $Y\subset \BP V$, $Y_{sing}$ denotes the singular points of $Y$.
The affine tangent space to a variety $X\subset \BP V$ at a smooth point $x$ is denoted $\hat T_xX\subset V$.

Throughout the paper we assume $\fromto{A_1}{A_n}, A, B, C$ are complex vector spaces
    of dimension  at least $2$.

\subsection{Results on  ranks and normal forms
for tensors}

The following proposition was probably \lq\lq known to the experts\rq\rq\ but we did not find it
in the literature, so we include a statement and proof.

\renewcommand{\theenumi}{(\alph{enumi})}
\renewcommand{\labelenumi}{\theenumi}

\begin{proposition}\label{lastthm}
Let $X=Seg(\BP A_1\times \dots \times \BP A_n)\subset \BP (A_1\otc A_n)$ be a Segre variety.
There is a normal form for points  $x\in \hat\s_2(X)$:
\begin{enumerate}
 \item $x=a_1^1\otc a_1^n$ for a point of $X$, which has rank $1$,
 \item $x=a_1^1\otc a_1^n+ a_2^1\otc a_2^n$ for a point  on
       a secant line to $X$ (here  we   require at least two of  the $a^i_2$ to
       be independent of the  corresponding   $a^i_1$), which has rank $2$,
 \item  and  for
         each  $J\subseteq \setfromto{1}{n}$, $|J|>2$,    the normal form
       \be\label{tprimevect}
           x=\sum_{j \in J}a_1^1\otc a_{1}^{j -1}\ot a_2^{j }\ot a_1^{j +1}\otc a_1^n
       \ene
       where each
      $a^j_2$ is independent of  the corresponding  $a^j_1$. This case  has rank $|J|$.
\end{enumerate}
In particular, all ranks from $1$ to $n$ occur for elements of $\s_2(X)$.
\end{proposition}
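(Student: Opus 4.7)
The strategy is to use the classical geometric decomposition $\hat\s_2(X) = \hat X \cup J(X,X) \cup \hat\t(X)$ of border-rank-$2$ points, read off normal forms from each stratum, and compute ranks, with the main work being the rank calculation on the tangential stratum.

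Forms (a) and (b) fall out of the first two strata. For (b), if only one of the $a_2^i$ in the second summand were independent of the corresponding $a_1^i$, the remaining $a_2^i = c_i a_1^i$ could be absorbed so that both summands collapse into a single elementary tensor of rank $1$; so genuine rank $2$ requires at least two mismatched factors, and a flattening to $A_i$ for a mismatched slot then exhibits rank exactly $2$. For a tangent line at $p = a_1^1 \otc a_1^n$, any element of $\hat T_p X$ has the form
\[
x = \sum_{j=1}^n a_1^1 \ot \dotsb \ot b^j \ot \dotsb \ot a_1^n, \qquad b^j \in A_j.
\]
I would split $b^j = \gamma_j a_1^j + a_2^j$ with $a_2^j$ either zero or independent of $a_1^j$, set $J = \{j : a_2^j \ne 0\}$, and collect the diagonal $\gamma_j$-terms into a single multiple $\delta\, p$. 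When $|J| \ge 1$, the substitution $a_2^j \rightsquigarrow a_2^j - (\delta/|J|)\, a_1^j$ for $j \in J$ absorbs this multiple (preserving independence) and produces the formula of (c). The cases $|J| \le 1$ (a single elementary tensor) and $|J| = 2$ (a rank-$2$ tensor with two mismatched factors after relabeling) place $x$ into (a) and (b) respectively; $|J| \ge 3$ is the form (c) literally.

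The substantive step is to verify $\mathrm{rank}(x) = |J|$ in (c). The upper bound is built into the formula. For the lower bound I would reduce to a ``$W$-state'' by applying projections $\pi_j : A_j \to A_j$ with image $\langle a_1^j, a_2^j \rangle$ for $j \in J$ and $\langle a_1^j \rangle$ otherwise: since $(\pi_1 \ot \dotsb \ot \pi_n)(x) = x$ and any rank decomposition of $x$ maps under $\pi_1 \ot \dotsb \ot \pi_n$ to a rank decomposition of the same or smaller size, rank is preserved, and the one-dimensional factors for $j \notin J$ may be discarded, reducing the problem to showing that
\[
T_m = \sum_{j=1}^m e_1 \ot \dotsb \ot e_2 \ot \dotsb \ot e_1 \in (\BC^2)^{\ot m} \qquad (e_2 \text{ in slot } j,\ m = |J|)
\]
has tensor rank $m$. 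I would prove this by induction on $m$, trivial at $m = 2$. For the step, write $T_m = e_1 \ot T_{m-1} + e_2 \ot e_1^{\ot(m-1)}$; given a decomposition $T_m = \sum_{k=1}^r u_k \ot v_k$ with each $v_k$ rank-one, set $u_k = \alpha_k e_1 + \beta_k e_2$, so $\sum_k \alpha_k v_k = T_{m-1}$ and $\sum_k \beta_k v_k = e_1^{\ot(m-1)}$. Picking any $\beta_r \ne 0$, solving for $v_r$ from the second equation and substituting into the first exhibits $T_{m-1} - c\, e_1^{\ot(m-1)}$ as a sum of $r - 1$ rank-one tensors. The basis change $e_2 \rightsquigarrow e_2 + (c/(m-1))\, e_1$ in each of the $m-1$ factors shows $T_{m-1} - c\, e_1^{\ot(m-1)}$ is $\GL$-equivalent to $T_{m-1}$, hence of rank $m - 1$, forcing $r \ge m$.

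The principal obstacle is this final inductive step: naive flattenings of $T_m$ give only the rank lower bound $2$, and the argument hinges on the ``gauge freedom'' that $T_m$ is equivalent to $T_m + c\, e_1^{\ot m}$ under a diagonal substitution, bootstrapping the weak bound into the sharp one. Once case (c) is settled, the final claim that all ranks $1, \dotsc, n$ arise in $\s_2(X)$ follows by taking $|J| = k$ for $1 \le k \le n$.
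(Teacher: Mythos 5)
Your proposal is correct and the heart of the argument is the same as the paper's: an induction on the number of tensor factors whose key step is the ``gauge'' observation that subtracting a multiple of $a_1^1\otimes\dots\otimes a_1^n$ (or of its slice) can be absorbed by the substitution $a_2^j\mapsto a_2^j-\lambda a_1^j$, which preserves the normal form and hence, by induction, the rank. The only packaging difference is that the paper runs the induction via the rank of the slice $x(A_1^*)\subset A_2\otimes\dots\otimes A_n$, whereas you first project to the $W$-state in $(\mathbb{C}^2)^{\otimes m}$ and eliminate one term of a putative rank decomposition directly; both are valid and essentially equivalent.
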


Our main result is the analogous classification for points in the third secant variety of the Segre product:

\renewcommand{\theenumi}{(\roman{enumi})}
\renewcommand{\labelenumi}{\theenumi}

\begin{theorem}\label{s3nformthm}
Assume $n\geq 3$
 and let $X := Seg(\BP A_1\ctimes \BP A_n)$.
Let $p=[v]\in \s_3(X)\setminus \s_2(X)$.
Then $v$ has one of the following normal forms:
\begin{enumerate}
\item\label{item_main_thm_normal_form_honest_secant}
      $v=x+y+z$ with $[x], [y], [z] \in X$,
\item\label{item_main_thm_normal_form_point_plus_tangent}
      $v= x'+y$, with $[x],[y]\in X$ and $x'\in \hat T_{[x]}X$,
\item\label{item_main_thm_normal_form_third_order_pt}
      $v = x'+x''$, where $[x(t)]\subset X$ is a curve and $x'=x'(0)$, $x''=x''(0)$, or
\item\label{item_main_thm_normal_form_two_tangents}
      $v=x'+y'$, where $[x],[y]\in X$ are distinct points that
lie on a line contained in  $X$,
  $x'\in \hat T_{[x]}X$, and $y'\in \hat T_{[y]}X$.
\end{enumerate}
  The points of type \ref{item_main_thm_normal_form_honest_secant}
     contain a Zariski open subset of $\s_3(X)\setminus \s_2(X)$.
  If $\dim A_i \ge 3$, then
     those of type \ref{item_main_thm_normal_form_point_plus_tangent}
     have codimension one in $\s_3(X)$,
  those of type \ref{item_main_thm_normal_form_third_order_pt}
      are contained in the closure of those of type \ref{item_main_thm_normal_form_point_plus_tangent}
      and have codimension two in $\s_3(X)$,
  those of type \ref{item_main_thm_normal_form_two_tangents}
      are in the closure of the set of points of type \ref{item_main_thm_normal_form_third_order_pt}
      and have codimension four in $\s_3(X)$.
  There are  $n$ distinct components of  points of type \ref{item_main_thm_normal_form_two_tangents}.
  A general point of each type is not a point of any of the other types.
\end{theorem}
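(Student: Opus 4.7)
The plan is to use the definition of border rank directly: for $[v]\in \s_3(X)\setminus \s_2(X)$ there exist analytic curves $x_1(t),x_2(t),x_3(t)$ on the affine cone $\hat X$ with $v=\lim_{t\to 0}\bigl(x_1(t)+x_2(t)+x_3(t)\bigr)$ (scalar coefficients can be absorbed into the curves), and the limit plane $P_0:=\lim_{t\to 0}\langle x_1(t),x_2(t),x_3(t)\rangle$ exists in the Grassmannian $G(3,V)$ with $v\in P_0$. First I would classify $v$ according to the coincidence pattern of the $[x_i(0)]\in \BP V$ and the vanishing orders of the underlying curves.

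The two easier cases I would dispose of first. If $[x_1(0)],[x_2(0)],[x_3(0)]$ are three distinct points of $X$, then $P_0$ is their span and $v$ has form (i). If exactly two limits agree, say $x_2(0)=c\cdot x_1(0)$, then writing $x_2(t)-c\,x_1(t)=t^k\bigl(a+O(t)\bigr)$ with $a\in \hat T_{[x_1(0)]}X\setminus\{0\}$ and rescaling by $t^{-k}$, the plane $P_0$ is spanned by $x_1(0)$, $a$, and $x_3(0)$; a direct inspection shows $v$ has form (ii), with $x':=\alpha x_1(0)+a\in \hat T_{[x_1(0)]}X$ and $y:=\gamma x_3(0)$ for appropriate scalars, the hypothesis $[v]\notin \s_2(X)$ forcing the coefficients of $a$ and of $x_3(0)$ to be nonzero so no further degeneration occurs.

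The heart of the proof is the triple-collision case $[x_1(0)]=[x_2(0)]=[x_3(0)]=:[x]$. I would expand $x_i(t)=x+ta_i+t^2 b_i+O(t^3)$, after a common rescaling and reparametrization, with $a_i\in \hat T_{[x]}X$, and the subcases are governed by $d:=\dim\langle a_1,a_2,a_3\rangle$. When $d=2$, elementary row operations on $(x_1,x_2,x_3)$ turn the family into a single curve $y(t)\subset \hat X$ whose Taylor data fills out $P_0$ as the osculating plane $\langle y(0),y'(0),y''(0)\rangle$; the assumption $[v]\notin \s_2(X)$ forces the $y''(0)$-coefficient to be nonzero, and the freedom in the reparametrization $y(t)\mapsto \mu(t)y(\phi(t))$ absorbs the $y(0)$-component, giving form (iii). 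When $d=1$ the three tangent directions are all parallel, and a second-order analysis of the $b_i$ shows that either the situation collapses back to form (iii), or the three curves spread along a line $\ell\subset X$ through $[x]$, in which case $P_0\subset \hat T_{[x]}X+\hat T_{[y]}X$ for a second point $[y]\in \ell$ and $v$ has form (iv). I expect this second-order analysis, and in particular showing that the only way to escape form (iii) is for $X$ to contain such a line along which the curves spread, to be the main obstacle; it will rely on the Segre geometry of $X$, specifically that a nonzero tangent direction at $[x]$ whose integral behaviour is straight to second order must correspond to a line through $[x]$ in $X$.

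The remaining assertions are dimension-theoretic. I would parametrize each type by its natural incidence variety and compute dimensions: form (ii) is parametrized by $J(X,\t(X))\setminus \s_2(X)$, of codimension one in $\s_3(X)$ when every $\dim A_i\ge 3$ (using $\dim \s_3(X)=3\dim X+2$ in this range); form (iii) is parametrized by the osculating variety $\ptstypeiii$, of codimension two; and form (iv) is parametrized by pairs $([x],[y])$ on a common line in $X$ together with a tangent vector chosen at each. Since a line of $X=Seg(\BP A_1\times\dots\times\BP A_n)$ is obtained by fixing one point in each factor except one and choosing a line in the remaining factor, such lines split into $n$ families according to the varying index, yielding the $n$ irreducible components of type (iv), of codimension four. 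The closure containments $\mathrm{(iv)}\subseteq\overline{\mathrm{(iii)}}\subseteq\overline{\mathrm{(ii)}}\subseteq\overline{\mathrm{(i)}}=\s_3(X)$ are verified by exhibiting explicit one-parameter degenerations (letting $[y]\to[x]$ inside form (ii), then further degenerating the resulting osculating plane to a pair of tangents along a line in $X$), and the claim that a generic point of each type lies in no other type follows from the strict inequalities among these four dimensions.
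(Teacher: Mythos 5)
Your overall architecture (classify by the coincidence pattern of the limit points, then by vanishing orders) is the right shape, and it parallels the paper's proof of Theorem~\ref{thm_points_in_Sigma}, but two of your case assignments have genuine gaps. First, your claim that three \emph{distinct} limit points $[x_1(0)],[x_2(0)],[x_3(0)]$ force form (i) is false: the three points can be distinct but collinear, and since $X$ is cut out by quadrics any trisecant line of $X$ lies in $X$, so this happens exactly when the three limits sit on a line $\ell\subset X$. Then $\dim\langle x_1(0),x_2(0),x_3(0)\rangle=2$, the limit plane picks up first-order data, and the resulting point is generically of type (iii) or (iv), not (i). The same degeneration infects your ``exactly two limits agree'' case when the third limit point lies on a line of $X$ through the doubled point (so that $x_1(0)$, $a$, $x_3(0)$ are dependent). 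The paper routes all of these configurations through the condition $II(\prj v_0^2)=0$ (the leading difference direction is tangent to a line on $X$) rather than through the multiplicity pattern of the limit points alone; your case tree has no branch for them.

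Second, in the triple-collision case your expansion $x_i(t)=x+ta_i+t^2b_i+O(t^3)$ ``after a common rescaling and reparametrization'' is not available in general: the three curves can separate from one another at different and arbitrarily high orders in $t$, and a single reparametrization cannot normalize all of them to leading orders $1$ and $2$. Moreover, once you allow higher vanishing orders you must also control the contribution of the degree-$\geq 3$ terms of the graph parametrization of $X$ (the forms $F_s$, $s\ge 3$), since a priori these could dominate the leading term of $x_1\wedge x_2\wedge x_3$; the paper needs the prolongation property of fundamental forms (Lemma~\ref{lem_vanishing_of_II_gives_vanishing_of_F_s}) precisely to rule this out, and your proposal has no substitute for it. Two further remarks: the paper first uses homogeneity to freeze one of the three curves (Lemma~\ref{lem_one_curve_is_constant}), which substantially reduces the bookkeeping you would face with three moving curves; and your closing claim that genericity-disjointness of the types ``follows from the strict inequalities among these four dimensions'' needs the additional observation that the general points of each type form single orbits (or $n$ orbits for type (iv)) under $GL(A_1)\times\dots\times GL(A_n)$, as otherwise a general point of a small type could still be a special point of a larger one.
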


When $n=2$,  all points on $ \s_3(Seg(\BP A_1\times \BP A_2))\backslash  \s_2(Seg(\BP A_1\times \BP A_2))$ are of type \ref{item_main_thm_normal_form_honest_secant}.

The following result  may also have been  \lq\lq known to the experts\rq\rq\ but we did not find it
in the literature either:

\begin{theorem}\label{txsmooth}
A general point of $\t(Seg(\BP A\times \BP B\times \BP C))$, i.e., a
point with the normal form \eqref{tprimevect} with $|J|=3$, is a smooth point
of $\s_2(Seg(\BP A\times \BP B\times \BP C))$.
In particular
\[
\tcodim(\s_2(Seg(\BP A\times \BP B\times \BP C))_{sing},
\s_2(Seg(\BP A\times \BP B\times \BP C)))\geq 2.
\]
\end{theorem}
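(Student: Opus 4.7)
The plan is to bound the Zariski tangent space $T_p\hat\s_2(X)$ from above by the tangent space to a convenient ambient scheme and to show this bound equals $\dim\hat\s_2(X) = 2(a+b+c)-4$, with $a=\dim A$, $b=\dim B$, $c=\dim C$. Since $\dim T_p\hat\s_2(X)\geq\dim\hat\s_2(X)$ automatically, a matching upper bound forces smoothness. The ambient scheme is $W\subseteq A\otimes B\otimes C$ defined by the $3\times 3$ minors of the three flattenings $T_A,T_B,T_C$; these minors vanish on every rank-two tensor and hence on $\hat\s_2(X)$, giving $\hat\s_2(X)\subseteq W$ scheme-theoretically and $T_p\hat\s_2(X)\subseteq T_pW$.

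Fix bases of $A,B,C$ extending the vectors of the normal form $p=a_2\otimes b_1\otimes c_1+a_1\otimes b_2\otimes c_1+a_1\otimes b_1\otimes c_2$ and write $t=\sum t_{ijk}\,a_i\otimes b_j\otimes c_k$. A direct inspection gives $\im T_A|_p = \mathrm{span}(a_1,a_2)$ and $\ker T_A|_p = \{M\in B^*\otimes C^* : M_{11}=0,\ M_{12}+M_{21}=0\}$, with symmetric descriptions for $T_B$ and $T_C$. By the standard tangent-space formula $T_M\{\mathrm{rk}\leq 2\} = \{N : N(\ker M)\subseteq \im M\}$ at a rank-two point $M$, the $T_A$-equations for $T_pW$ say that for each $i\geq 3$ the $b\times c$ slice $(t_{ijk})_{j,k}$ lies in the two-dimensional subspace $\mathrm{span}(E_{11},\,E_{12}+E_{21})$; symmetric conditions come from $T_B$ (for $j\geq 3$) and $T_C$ (for $k\geq 3$).

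Grouping entries $t_{ijk}$ by how many indices are ``large'' ($\geq 3$) versus ``small'' ($\in\{1,2\}$): all $8$ entries with three small indices are free; each single-large-index slice contributes exactly two free parameters (the other entries of that slice being forced to zero); and every entry with two or more large indices is forced to vanish. Summing, $\dim T_pW = 8 + 2(a-2) + 2(b-2) + 2(c-2) = 2a+2b+2c-4 = \dim\hat\s_2(X)$, using non-defectivity of $\s_2$ for three-factor Segres, which proves that $p$ is smooth.

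For the codimension claim, the rank-two orbit (b) of Proposition~\ref{lastthm} is open in $\s_2(X)$ and, being a single $GL(A)\times GL(B)\times GL(C)$-orbit, is entirely smooth by Terracini's lemma; the $|J|=3$ tangential orbit is a single orbit of codimension one in $\s_2(X)$ (open and dense in $\t(X)$), entirely smooth by the computation above; the remaining orbits ($|J|=2$ tangential and the rank-one locus $X$ itself) have codimension at least two. Therefore $\s_2(X)_{sing}$ is contained in this union of lower-dimensional orbits and has codimension $\geq 2$. The main obstacle in executing the plan is the simultaneous bookkeeping of the three slice constraints and the verification that they interact consistently on entries with multiple large indices; the permutation symmetry of the three factors provides a useful consistency check on the parameter count.
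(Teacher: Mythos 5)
Your argument is correct, but it takes a genuinely different route from the paper. The paper's proof is a two-line reduction: it identifies $\s_2(Seg(\BP A\times \BP B\times \BP C))$ with the subspace variety $Sub_{2,2,2}$, invokes the Kempf-type desingularization $\BP\mathcal{E}\to Sub_{2,2,2}$ (which is an isomorphism away from $Sub_{1,2,2}\cup Sub_{2,1,2}\cup Sub_{2,2,1}$), and observes that the tangential normal form does not lie in any of those smaller subspace varieties; smoothness and the codimension bound follow at once. You instead compute the Zariski tangent space directly, sandwiching $T_p\hat\s_2$ inside the common kernel of the differentials of the $3\times 3$ minors of the three flattenings and counting parameters to get exactly $2(\dim A+\dim B+\dim C)-4$. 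Your computation checks out: the annihilator of $\ker p_A$ is indeed spanned by $b_1\ot c_1$ and $b_1\ot c_2+b_2\ot c_1$, the slice conditions kill all entries with two or more ``large'' indices, and the count $8+2(\dim A-2)+2(\dim B-2)+2(\dim C-2)$ matches $\dim\hat\s_2$. Your approach is more elementary and self-contained (it needs only the classical tangent-space formula for determinantal loci and non-defectivity of $\s_2$), whereas the paper's approach is shorter and reuses the $Sub_{i,j,k}$ machinery it needs anyway for the $\s_3$ singularity analysis; your method also has the virtue of not requiring any prior knowledge of the singular locus of $Sub_{2,2,2}$.

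A few small inaccuracies worth fixing, none fatal. First, in the slice count the entries $t_{i12}$ and $t_{i21}$ of a large-$i$ slice are forced \emph{equal}, not zero; the two free parameters are $t_{i11}$ and the common value $t_{i12}=t_{i21}$. Second, in the codimension argument: type (b) of Proposition~\ref{lastthm} is not a single orbit (it contains the degenerate rank-two points lying in a $Sub_{2,2,1}$, etc.), and there is no ``$|J|=2$ tangential'' stratum when $n=3$ since case (c) requires $|J|>2$. The correct list of remaining strata is $X$ itself, the non-generic rank-two orbits, and the boundary of the tangential orbit inside $\t(X)$; one should check these have codimension at least two --- e.g.\ $Sub_{2,2,1}$ has codimension $\dim C+1\geq 3$ in $\s_2$, and $X$ has codimension $\dim A+\dim B+\dim C-2\geq 4$ --- which is true but should be stated rather than asserted. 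Finally, smoothness along the open orbit follows simply because the smooth locus is open, dense, and $GL(A)\times GL(B)\times GL(C)$-invariant; Terracini's lemma is not needed there.
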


We prove an analogous result for $\s_3(Seg(\BP A\times \BP B\times \BP C))$:

\begin{theorem}\label{s3sing}
Let $p \in \s_3(Seg(\BP A\times \BP B\times \BP C))$.
If $p$ is a general point of type \ref{item_main_thm_normal_form_point_plus_tangent}
         or \ref{item_main_thm_normal_form_third_order_pt},
  or a general point of any component of points of type \ref{item_main_thm_normal_form_two_tangents},
  then $p$ is a nonsingular point of $\s_3(Seg(\BP A\times \BP B\times \BP C))$.
Moreover, if $\dim A, \dim B,\dim C \ge 3$,
   and $p$ is a general point in the set of the points contained in some $\PP(\CC^2 \otimes \CC^3 \otimes \CC^3)$,
   then $p$ is a nonsingular point of $\s_3(Seg(\BP A\times \BP B\times \BP C))$,
     and similarly for permuted statements.

In particular $\tcodim(\s_3(Seg(\BP A\times \BP B\times \BP C))_{sing},
\s_3(Seg(\BP A\times \BP B\times \BP C)))\geq 2$.
\end{theorem}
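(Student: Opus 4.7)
Since $\s_3(X)$ is irreducible, any $p \in \s_3(X)$ satisfies $\dim T_p \s_3(X) \ge \dim \s_3(X)$, with equality iff $p$ is smooth. When $\dim A, \dim B, \dim C \ge 3$, $\s_3(X)$ is non-defective of dimension $3(a+b+c)-7$; the expected dimension is similarly attained in the $\PP(\CC^2 \otimes \CC^3 \otimes \CC^3)$ setting. So for each claim I only need to establish the upper bound $\dim T_p \s_3(X) \le \dim \s_3(X)$.

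For each type I would compute $\hat T_p \s_3(X)$ by a Terracini-style analysis extended to limiting configurations: a tangent vector at $p = [v]$ takes the form $\frac{d}{ds}\big|_{s=0} c(s)$ for a curve $c \colon (\BC,0) \to \hat\s_3(X)$ with $c(0) = v$, which one writes as $c(s) = \lambda_1(s) x_1(s) + \lambda_2(s) x_2(s) + \lambda_3(s) x_3(s)$ with $[x_i(s)] \in X$, allowing poles in the $\lambda_i$ to be cancelled by collisions of the $x_i$. For $p$ of type \ref{item_main_thm_normal_form_point_plus_tangent}, written $p = x'(0) + y$, this analysis yields $\hat T_p \s_3(X) = \hat T_{x'} \tau(X) + \hat T_{[y]} X + W$, where $W$ is a small correction accounting for scaling interactions between the length-$2$ fat part at $[x]$ and the reduced point $[y]$; Theorem \ref{txsmooth} ensures $\hat T_{x'} \tau(X)$ has the expected dimension. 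Type \ref{item_main_thm_normal_form_third_order_pt} is handled similarly using the $2$-jet of a single curve in $X$; type \ref{item_main_thm_normal_form_two_tangents} requires a symmetric treatment of each of the $n$ components, taking into account that the coupling of the two tangent directions is mediated by the embedded line. The $\PP(\CC^2 \otimes \CC^3 \otimes \CC^3)$-case reduces to the preceding analyses applied inside the smaller Segre.

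The main obstacle is the upper bound: one must rule out tangent vectors to $\s_3(X)$ at $p$ outside the explicitly constructed span. I would handle this either using known defining equations of $\s_3(X)$ (for instance, $4 \times 4$ minors of flattenings and Strassen's commutation conditions, which set-theoretically cut out $\s_3(X)$ in many ranges) and computing their Jacobian rank at $p$, or by describing $\s_3(X)$ locally near $p$ as the image under a regular morphism of a smooth parameter space --- a resolution of the incidence $\{(Z,v) : Z \subset X \text{ of length } 3,\ v \in \langle Z\rangle\}$, whose smoothness is controlled by that of $X^{[3]}$ --- and checking that the differential is an isomorphism at the fibre over $p$. The verification is most delicate for type \ref{item_main_thm_normal_form_point_plus_tangent}, which sits in codimension one and hence leaves no slack.

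Once the four cases are settled, the codimension statement $\tcodim(\s_3(X)_{sing}, \s_3(X)) \ge 2$ is immediate from Theorem \ref{s3nformthm}: type \ref{item_main_thm_normal_form_point_plus_tangent} is the unique codimension-one stratum and its general point is now smooth, while the remaining potentially non-smooth strata all sit in codimension at least two.
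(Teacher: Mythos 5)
Your overall frame (bound $\dim T_p\s_3(X)$ above by $\dim \s_3(X)$, then deduce the codimension statement from Theorem \ref{s3nformthm}) is right, and one of your two proposed tools --- computing the rank of the Jacobian of explicit equations vanishing on $\s_3(X)$ --- is exactly what the paper does, but only once: at a general point of type \ref{item_main_thm_normal_form_two_tangents} in the $3\times 3\times 3$ case, using the $27$ Strassen quartics, and then lifting to arbitrary $A,B,C$ via the desingularization $\BP\cE\to Sub_{333}$. The structural point you miss is that this single computation suffices for all of types \ref{item_main_thm_normal_form_point_plus_tangent}--\ref{item_main_thm_normal_form_two_tangents}: the singular locus is closed and type \ref{item_main_thm_normal_form_two_tangents} lies in the closure of both type \ref{item_main_thm_normal_form_third_order_pt} and type \ref{item_main_thm_normal_form_point_plus_tangent}, so smoothness at a general type-\ref{item_main_thm_normal_form_two_tangents} point forces smoothness at general points of the larger strata. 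This is why the paper never has to confront the case you correctly flag as ``most delicate'' (the codimension-one stratum \ref{item_main_thm_normal_form_point_plus_tangent}); your plan to treat each stratum separately, especially via a ``Terracini-style analysis at limiting configurations,'' only manufactures vectors lying in $T_p\s_3(X)$ and hence only lower bounds --- it cannot close the argument on its own, as you half-acknowledge.

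The genuine gap is your treatment of the $\PP(\CC^2\otimes\CC^3\otimes\CC^3)$ case. It does \emph{not} reduce to ``the preceding analyses applied inside the smaller Segre'': a general point $p$ of $Sub_{233}$ is an honest rank-$3$ point (type \ref{item_main_thm_normal_form_honest_secant}), and inside the smaller Segre $\s_3$ is all of $\PP^{17}$, which says nothing about $T_p\s_3(Seg(\BP A\times\BP B\times\BP C))$ in the ambient space. The difficulty is that $Sub_{233}$ is a divisor in $\s_3$ over which the natural parametrization $\phi:(A\times B\times C)^{\times 3}\dashrightarrow\hat\s_3(X)$ has positive-dimensional fibers. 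The paper's argument here is the longest part of the proof: one first shows (using that $Sub_{233}\not\subset\ptstypeii$ and Theorem \ref{s3nformthm}) that a whole neighborhood of $p$ in $\s_3$ consists of type-\ref{item_main_thm_normal_form_honest_secant} points, then explicitly identifies $\phi^{-1}(p)$ as six copies of $(\CC^*)^7$, checks the image of $d\phi$ is the same $(3(\dim A+\dim B+\dim C)-7)$-dimensional space at every point of the fiber, and concludes by a constant-rank argument. Your alternative route through a resolution of the incidence variety would have to carry out essentially this same fiber analysis; asserting that its smoothness is ``controlled by that of $X^{[3]}$'' and that one checks the differential ``at the fibre over $p$'' skips precisely the content that makes this case work.
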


Normal forms for Theorem \ref{s3nformthm} when $n=3$ are as follows:
\begin{enumerate}
\item $a_1\ot b_1\ot c_1+ a_2\ot b_2\ot c_2+ a_3\ot b_3\ot c_3$
\item $a_1\ot b_1\ot c_2+ a_1\ot b_2\ot c_1+ a_2\ot b_1\ot c_1+ a_3\ot b_3\ot c_3$
\item $a_1\ot b_2\ot c_2+ a_2\ot b_1\ot c_2+a_2\ot b_2\ot c_1+a_1\ot b_1\ot c_3 +a_1\ot b_3\ot c_1 +a_3\ot b_1\ot c_1$
\item $ a_2\ot b_1\ot c_2 + a_2\ot b_2\ot c_1+a_1\ot b_1\ot c_3+a_1\ot b_3\ot c_1+a_3\ot b_1\ot c_1$.
\end{enumerate}
For type \ref{item_main_thm_normal_form_two_tangents} there are two other normal forms,
   where the role of $a$ is switched with that of $b$ and $c$.
These normal forms are depicted in terms of \lq\lq slices\rq\rq\  in
  Table~\ref{table_orbits_general} on page~\pageref{table_orbits_general}.
(In the tensor literature, $3$-way tensors $T\in A\ot B\ot C$ are often studied by their
images $T(A^*)\subset B\ot C$ etc... and these images are studied in terms of bases, resulting
in a parametrized subspace of a space of matrices.
These parametrized spaces of matrices are called {\it slices}.)
Here $a_j,b_j,c_j$ need not be independent vectors, so
to parametrize the spaces, fix bases of each space and
write the $a_j,b_j,c_j$ as arbitrary linear combinations
of basis vectors. (However there are some independence requirements.)

Here are normal forms for all $n$:
\begin{align}
\label{nf1}& p_{(i)}= a^1_{1}\otc a^n_{1}+ a^1_{2}\otc a^n_{2}+
a^1_{3}\otc a^n_{3} \\
&\label{nf2} p_{(ii)}= \sum_{i} a^1_{1}\otc a^{i-1}_{1}\ot a^i_2\ot a^{i+1}_{1}\otc
                    a^n_{1}
              + a^1_{3}\otc a^n_{3}\\
\label{nf3}
&
p_{(iii)}=\sum_{  i<j } a^1_{1}\otc a^{i-1}_{1}\ot a^i_2
\ot a^{i+1}_1 \otc
a^{j-1}_{1}\ot a^j_2
\ot a^{j+1}_1
\otc
 a^n_{1}
 \\ &\nonumber
+
\sum_{ i } a^1_{1}\otc a^{i-1}_{1}\ot a^i_3
\ot a^{i+1}_1 \otc
 a^n_{1}
\\
\label{nf4}
&p_{(iv)}=
\sum_{ s=2 }^{n} a^1_{2}\ot a^2_{1}\otc a^{s-1}_{1}\ot a^s_2
\ot a^{s+1}_1 \otc a^n_{1}.
\\
&\nonumber +
\sum_{ i=1 }^n a^1_{1}\otc a^{i-1}_{1}\ot a^i_3
\ot a^{i+1}_1 \otc
 a^n_{1}
\end{align}
Again, \eqref{nf4} has $n-1$ other normal forms, where the role of $a^1_{*}$ is exchanged with $a^i_{*}$.
Also, the vectors need not all be linearly independent.

\begin{remark}
In contrast to  case (iv)  above, already with four points on a three factor Segre spanning a three dimensional
vector space, one can obtain new limits by taking a second derivative, even when the limiting
points are distinct.
Consider the points $x_1=a_1\ot b_1\ot c_1$, $x_2=a_2\ot b_2\ot c_1$,
$x_3=\frac 12(a_1+a_2)\ot( b_1-b_2)\ot c_1$, $x_4=\frac 12(a_1-a_2)\ot( b_1+b_2)\ot c_1$.
Note that $x_1=x_2+x_3+x_4$. Here both first and second derivatives of curves give
new points.
More generally,  consider
\[
  Seg(v_2(\pp 1)\times \underbrace{\pp 0\ctimes \pp 0}_{(n-2)\text{ factors}}) \subset Seg(\BP A_1\otc \BP A_n).
\]
 Any four
points lying on
 $
    Seg(v_2(\pp 1)\times \pp 0\ctimes \pp 0)
 $
  will be linearly dependent.
Exceptional limit points turn out to be important - an exceptional limit in
$\s_5(Seg(\BP A\times \BP B\times \BP C))$ is used in Bini's
approximate algorithm to multiply $2\times 2$ matrices with an entry  zero,
and an exceptional limit in  $ \s_7(Seg(\BP A\times \BP B\times \BP C)) $ is used in Sch\"onhage's
approximate algorithm to multiply $3\times 3$ matrices using $21$ multiplications,
see \cite[\S4.4]{jabu_ginensky_landsberg_Eisenbuds_conjecture}.
\end{remark}

\renewcommand{\theenumi}{(\arabic{enumi})}
\renewcommand{\labelenumi}{\theenumi}

Since there are only finitely many configurations of triples of  points in $A_i$ up to the action of $GL(A_i)$, we   conclude:
\begin{corollary}
   There are only finitely many orbits of the action of
       $GL(A_1) \times \dots \times GL (A_n)$ on $\sigma_3 (Seg(\BP A_1\times \dots \times \BP A_n))$.
\end{corollary}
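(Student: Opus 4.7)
The plan is to reduce to a finite counting problem. By Theorem~\ref{s3nformthm} and Proposition~\ref{lastthm}, every point of $\s_3(X)$ lies in one of the finitely many explicit normal forms (four in $\s_3\setminus\s_2$ as in \eqref{nf1}--\eqref{nf4}, plus the three types in $\s_2$). In each case the tensor is assembled from rank-one and tangent pieces involving, for each factor index $i$, at most three vectors $a_1^i,a_2^i,a_3^i\in A_i$. It therefore suffices to bound, for each normal form, the number of orbits of such assemblies under $GL(A_1)\times\cdots\times GL(A_n)$.

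The key observation is that $GL(A_1)\times\cdots\times GL(A_n)$ acts on the data $\{(a_1^i,a_2^i,a_3^i)\}_{i=1}^n$ factorwise: an element $g_i\in GL(A_i)$ transforms the triple in $A_i$ independently of the other factors, while the combinatorial shape of the normal form is preserved. Orbits of triples of vectors in a single $A_i$ under $GL(A_i)$ are generally infinite (for instance, parameterized by ratios of proportionality constants when the vectors are linearly dependent), but the internal scaling symmetries of the normal form absorb exactly this ambiguity. For type \ref{item_main_thm_normal_form_honest_secant}, the expression $\sum_{j=1}^3 a_j^1\otc a_j^n$ is invariant under $a_j^i\mapsto\lambda_j^i a_j^i$ whenever $\prod_i\lambda_j^i=1$ for each $j$; combined with the $GL$-action this reduces the data to the $PGL(A_i)$-orbit of the projective triple $([a_1^i],[a_2^i],[a_3^i])$ in $\PP A_i$. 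Entirely analogous scaling freedoms are present in each of the other three normal forms.

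Finally, triples of projective points in $\PP A_i$ form at most six orbits under $PGL(A_i)$, classified by coincidences among the points and, for three distinct points, by whether they are collinear. Hence the total orbit count in $\s_3(X)$ is bounded by (number of normal forms) $\cdot\,6^n$, which is finite for each fixed $n$.

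The main obstacle is the bookkeeping in the scaling absorption step: for each of the four normal forms one must verify that the explicit invariance group of the expression, together with $GL(A_1)\times\cdots\times GL(A_n)$, suffices to kill all remaining scalar ratios, even at degenerate configurations where some $a_j^i$ coincide or vanish, while also tracking the discrete choices inherent in the normal form itself (such as the distinguished index in type~\ref{item_main_thm_normal_form_two_tangents}, which is responsible for the $n$ components there). Once this case-by-case analysis is in place for each type in Theorem~\ref{s3nformthm}, the corollary follows by multiplying out the finite tallies.
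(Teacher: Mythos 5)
Your overall strategy is the one the paper uses: deduce the corollary from the finitely many normal forms of Theorem~\ref{s3nformthm} plus a factor-by-factor finiteness claim for configurations of triples (the paper's entire proof is one sentence asserting exactly this). But the specific reduction you propose --- that the internal scaling symmetries of each normal form, combined with the $GL$-action, cut the data down to the $PGL(A_i)$-orbit of the projective triple $([a_1^i],[a_2^i],[a_3^i])$, yielding at most $6^n$ orbits per normal form --- is false, and it fails precisely at the degenerate configurations you relegate to ``bookkeeping''. Concretely, take $n=3$ and
\[
T_\mu = a_1\ot b_1\ot c_1 + a_2\ot b_2\ot c_2 + \mu\,(a_1+a_2)\ot(b_1+b_2)\ot(c_1+c_2),\qquad \mu\in\CC^*.
\]
For every $\mu$ the projective configuration in each factor is the same (three distinct collinear points), so your invariant cannot distinguish these tensors. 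Yet the two slices give $\det(sX_1+tX_2)=\mu s^2+(1+2\mu)st+\mu t^2$ with discriminant $1+4\mu$: for generic $\mu$ the tensor is in the dense rank-two orbit of $\CC^2\ot\CC^2\ot\CC^2$, while for $\mu=-1/4$ it is concise with a double root, hence in the tangential (rank-three) orbit. The mechanism is that once the $a^i_j$ satisfy a linear relation, the only rescalings $a^i_j\mapsto\lambda^i_j a^i_j$ realizable inside $GL(A_i)$ are those compatible with that relation (here $\lambda^i_1=\lambda^i_2=\lambda^i_3$), so the scalar moduli are not absorbed and your map from orbits to $n$-tuples of projective configuration types is not injective. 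Finiteness of the target then proves nothing about the source.

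The corollary is not thereby false --- in the example the family $T_\mu$ still meets only finitely many orbits --- but closing the gap requires handling the degenerate strata by a different argument. This is what the paper actually does in the three-factor case (\S\ref{sec_ranks_and_orbits}): whenever the triple in some factor is linearly dependent, the tensor lies in a subspace variety $Sub_{\aaa',\bbb',\ccc'}$ with that factor replaced by $\CC^2$ or $\CC^1$, and the orbits there are enumerated directly ($33$ orbits via \cite{BLtensor}, plus the six rows of Table~\ref{table_orbits_general} for the non-degenerate configurations). Your argument is sound for the open, non-degenerate case, where the triples are independent in every factor and the scalings really are free; for the degenerate case you need either such an explicit enumeration or an induction on the dimensions of the factors, not the $6^n$ count.
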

In the three factor case, there are $39$ orbits, see  \S\ref{sec_ranks_and_orbits}.

\begin{remark}
Points of the form $y+y'+y''$
   where $y(t)$ is a curve on $\hat Seg(\BP A_1\otc \BP A_n)$ have
rank at most $\binom {n+1}2$ because
all such points are of the form \eqref{nf3} (perhaps with linearly dependent variables).
The bound $R_{Seg(\BP A_1\otc \BP A_n)}(y+y'+y'') \le \binom{n+1}{2}$ is not  tight,
  as for $n=3$ the following theorem
shows $R_{Seg(\BP A\times \BP B \times \BP C)}(y+y'+y'')$ is at most five.
\end{remark}

\begin{theorem}\label{lastcor} The rank of a general point of the form $[y+y'+y'']$ of
$ \s_3(Seg(\BP A\times \BP B\times \BP C))$  as well as the
rank of a general point of the form $[x'+y']$ where $[x],[y]$ lie on a line in $Seg(\BP A\times \BP B\times \BP C)$,
 is $5$.
All other points of $ \s_3(Seg(\BP A\times \BP B\times \BP C))$
have rank less than five, so in particular,   the maximum rank of any point of $\s_3(Seg(\BP A\times \BP B\times \BP C))$ is $5$.
\end{theorem}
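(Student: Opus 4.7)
The plan is to establish matching upper and lower bounds of five for the rank of generic points of types \ref{item_main_thm_normal_form_third_order_pt} and \ref{item_main_thm_normal_form_two_tangents}, and to show that every other point of $\s_3(Seg(\BP A\times\BP B\times\BP C))$ has rank at most four. For the upper bound, the $n=3$ normal form of type \ref{item_main_thm_normal_form_two_tangents} already displays the point as a sum of five rank-one tensors. For type \ref{item_main_thm_normal_form_third_order_pt} the natural normal form has six summands, so the task is to reduce by one; I plan to use the identity
\[
(a_1+a_2)\otimes(b_1+b_2)\otimes(c_1+c_2)=a_1\otimes b_1\otimes c_1+a_2\otimes b_2\otimes c_2+U_1+U_2,
\]
where $U_1\in\hat T_{[a_1\otimes b_1\otimes c_1]}X$ is the tangent vector with direction $(a_2,b_2,c_2)$ and $U_2\in\hat T_{[a_2\otimes b_2\otimes c_2]}X$ is the tangent vector with direction $(a_1,b_1,c_1)$. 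Since $U_2$ equals the first three summands of the type \ref{item_main_thm_normal_form_third_order_pt} normal form, substituting and combining the remaining three summands with $-U_1$ produces a tangent vector at $[a_1\otimes b_1\otimes c_1]$ with direction $(a_3-a_2,b_3-b_2,c_3-c_2)$, of rank three; absorbing the leftover $-a_1\otimes b_1\otimes c_1$ into one of its summands preserves this count, giving a total of $1+1+3=5$ rank-one summands.

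For the lower bound, I will apply a pencil analysis to the slices. Suppose $R(v)\le 4$ and $v=\sum_{i=1}^4 u_i\otimes v_i\otimes w_i$. Since the $A$-flattening rank of the type \ref{item_main_thm_normal_form_third_order_pt} normal form equals three, three of the $u_i$ must form a basis of $A$; say $u_1,u_2,u_3$ with $u_4=\alpha u_1+\beta u_2+\gamma u_3$. The three $A$-slices of $v$ in the basis $\{u_1,u_2,u_3\}$ each take the form $v_i\otimes w_i+c_i\,v_4\otimes w_4$, of rank at most two, so the $A$-slice pencil of the normal form must contain three linearly independent rank-at-most-two matrices. Computing the determinant of the generic pencil element in the natural basis, $\det(\alpha M_1+\beta M_2+\gamma M_3)=-\alpha^3$, and hence the rank-at-most-two locus is only two-dimensional, a contradiction. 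Therefore $R\ge 5$. The analogous $C$-slice analysis for the type \ref{item_main_thm_normal_form_two_tangents} normal form yields the identical determinant $-\alpha^3$, and so $R\ge 5$ holds there as well.

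For the final claim, Theorem~\ref{s3nformthm} reduces the analysis to the four types. Type \ref{item_main_thm_normal_form_honest_secant} has rank three by definition, and type \ref{item_main_thm_normal_form_point_plus_tangent} is a rank-three tangent vector plus one rank-one summand, giving rank at most four. Non-generic configurations of types \ref{item_main_thm_normal_form_third_order_pt} and \ref{item_main_thm_normal_form_two_tangents}, in which the nine vectors $a_i,b_i,c_i$ satisfy extra linear dependencies, admit rank-at-most-four decompositions by direct inspection---either because some tangent component degenerates in rank or because two summands collapse into one. The main obstacle lies in the lower bound pencil analysis: one must carefully dispatch the case in which the decomposition vectors $u_i$ fail to span $A$, which is handled by observing that the $A$-flattening rank of the normal form equals three and so forces the $u_i$'s to span.
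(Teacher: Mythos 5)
Your treatment of the two rank-five cases is correct but follows a genuinely different route from the paper's. The paper (Proposition~\ref{lastprop}) fixes the $A$-slice spaces of Table~\ref{table_orbits_general} and argues directly that a four-dimensional space containing them cannot be spanned by rank-one matrices: for type \ref{item_main_thm_normal_form_third_order_pt} by an explicit column reduction, and for type \ref{item_main_thm_normal_form_two_tangents} --- where the $A$-slice pencil $\left(\begin{smallmatrix} t&s&u\\ s&&\\ u&&\end{smallmatrix}\right)$ has identically vanishing determinant, which is exactly why the paper calls that case ``more delicate'' --- by showing $(sf_3-uf_2)^2$ and $(sg_3-ug_2)^2$ lie in the ideal of $2\times 2$ minors. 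Your argument, that a rank-$\le 4$ decomposition forces three spanning slices of rank $\le 2$ while the determinant of the pencil is the perfect cube $-\alpha^3$ (so the rank-$\le 2$ locus is a hyperplane and cannot contain three independent spanning elements), is cleaner and uniform, \emph{provided} one picks the right flattening: your switch to the $C$-slices for type \ref{item_main_thm_normal_form_two_tangents} is essential and does work --- the $C$-slice pencil of the row-34 normal form is $\left(\begin{smallmatrix}\beta&\alpha&0\\ \alpha&0&0\\ \gamma&0&\alpha\end{smallmatrix}\right)$ with determinant $-\alpha^3$. Your upper bound for type \ref{item_main_thm_normal_form_third_order_pt} via the identity $(a_1+a_2)\otimes(b_1+b_2)\otimes(c_1+c_2)$ is also correct (I checked the bookkeeping: one gets $1+1+3=5$ summands) and replaces the paper's appeal to \cite{BLtensor} for the rank-four sub-pencil.

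The genuine gap is in ``all other points have rank less than five.'' The points that are neither generic of type \ref{item_main_thm_normal_form_third_order_pt} nor generic of type \ref{item_main_thm_normal_form_two_tangents} include every border-rank-$\le 3$ tensor lying in $Sub_{233}\cup Sub_{323}\cup Sub_{332}$, and your ``direct inspection'' of degenerate configurations is not carried out. This is not a formality: the paper's proof of this clause rests on the complete orbit classification --- the $33$ orbits inside the subspace varieties, whose ranks (all at most four) are computed in \cite[\S 6]{BLtensor}, together with orbits $38$ and $39$ of Table~\ref{table_orbits_general}. A degenerate type-\ref{item_main_thm_normal_form_third_order_pt} or -\ref{item_main_thm_normal_form_two_tangents} configuration need not visibly collapse two summands or degenerate a tangent vector; what one actually needs is that every tensor of border rank at most three in $\CC^2\otimes\CC^3\otimes\CC^3$ (and permutations) has rank at most four. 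To complete your proof you must either import that classification or supply an independent rank bound on the boundary orbits.
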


\begin{remark} Theorem \ref{lastcor} seems to have been a \lq\lq folklore\rq\rq\ theorem
in the tensor literature. For example, in \cite{MR2535056}, Table 3.2 the result
is stated and refers to \cite{MR1088949}, but in that paper the result is stated
and a paper that never appeared is referred to. Also, there appear to have been privately circulating
proofs, one due to R. Rocci from 1993 has been shown to us.
We thank M. Mohlenkamp for these historical remarks.
\end{remark}

The Comon conjecture on ranks says that for
$T\in S^dV\subset V^{\ot d}$
the symmetric tensor rank of $T$ equals the tensor rank of $T$.

\begin{corollary}\label{comoncor} The Comon conjecture holds for
$T\in \hat \s_3(v_3(\BP V))$.
\end{corollary}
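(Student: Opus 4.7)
The plan is to combine the classification of $\sigma_3(X)$ in Theorem~\ref{s3nformthm}, applied with $X = Seg(\BP V \times \BP V \times \BP V)$, with the inequality $R(T) \leq R_{sym}(T)$ that holds automatically for any symmetric tensor $T$. The task is then to verify the reverse inequality $R_{sym}(T) \leq R(T)$ for every $T \in \hat\sigma_3(v_3(\BP V))$, and I would do this by exhibiting, for each symmetric $T$, an explicit symmetric decomposition of size $R(T)$.

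I first observe that type~\ref{item_main_thm_normal_form_two_tangents} cannot occur for symmetric $T$: its normal form requires two distinct points of $X$ joined by a line contained in $X$, but the cubic Veronese $v_3(\BP V)$ contains no projective line (any line on $v_d(\BP V)$ forces $d = 1$). Thus only types~\ref{item_main_thm_normal_form_honest_secant}, \ref{item_main_thm_normal_form_point_plus_tangent}, and \ref{item_main_thm_normal_form_third_order_pt} remain. For type~\ref{item_main_thm_normal_form_honest_secant}, which has tensor rank $3$, I would verify that a symmetric $T = x + y + z$ admits a symmetric decomposition: generically the factorization is unique (by Kruskal's uniqueness criterion applied to the three-factor Segre in general position), forcing each summand to lie in $\hat v_3(\BP V)$; for boundary cases one checks symmetric rank $\leq 3$ directly via apolarity. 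For type~\ref{item_main_thm_normal_form_point_plus_tangent}, the symmetric reduction of \eqref{nf2} is $v^2 w + u^3$, whose symmetric rank is at most $4$ by combining Sylvester's theorem (which gives symmetric rank $3$ for $v^2 w$ viewed as a binary cubic in $S^3 \operatorname{span}(v,w)$) with one extra cube $u^3$; this matches the tensor-rank bound of $4$ implied by Theorem~\ref{lastcor}. For type~\ref{item_main_thm_normal_form_third_order_pt}, the symmetric reduction is the osculating form $3v^2w + 6vw^2 + 6v^2u$ arising from $x(t) = (v + tw + t^2 u)^3$; an explicit symmetric decomposition of size $5$ can be written by choosing five points on a rational normal curve of degree $3$ through the osculating scheme at $t = 0$, matching the tensor rank $5$ from Theorem~\ref{lastcor}.

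The main obstacle I anticipate is the rigidity step required to conclude, in types~\ref{item_main_thm_normal_form_point_plus_tangent} and~\ref{item_main_thm_normal_form_third_order_pt}, that a symmetric $T$ admits a \emph{symmetric} choice of underlying vectors, rather than merely being expressible in the Segre normal form with asymmetric pieces that happen to symmetrize to $T$. The finiteness of orbits under $\prod_i GL(A_i)$ on $\sigma_3(X)$ (the $39$ orbits noted above in the three-factor case) makes this tractable: one picks a symmetric representative in each of the finitely many orbits that meet $\hat v_3(\BP V)$ and verifies the equality of ranks orbit by orbit.
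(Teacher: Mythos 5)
Your overall strategy---enumerate the symmetric normal forms of border rank at most three and match symmetric ranks against tensor ranks---is the same one the paper uses, which disposes of the corollary in one line by comparing the normal forms and ranks established here with those of \cite{LTrank}. But two steps of your argument as written do not go through. First, your exclusion of type \ref{item_main_thm_normal_form_two_tangents} argues about the wrong variety: you invoke Theorem \ref{s3nformthm} with $X=Seg(\BP V\times\BP V\times\BP V)$, and in that classification the line in type \ref{item_main_thm_normal_form_two_tangents} is required to lie on the \emph{Segre}, which contains many lines, so the absence of lines on $v_3(\BP V)$ is beside the point. The exclusion becomes correct only if you classify $T$ by its type with respect to $X=v_3(\BP V)$ (Theorem \ref{thm_points_in_Sigma} applied to the Veronese, i.e.\ the example from \cite{LTrank} quoted in \S\ref{s3exsect}); the hypothesis $T\in\hat\s_3(v_3(\BP V))$ hands you that classification---$x^3+y^3+z^3$, $x^2y+z^3$, $x^2y+xz^2$, plus the $\s_2$ strata---for free, and there is then nothing to exclude.

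Second, for types \ref{item_main_thm_normal_form_point_plus_tangent} and \ref{item_main_thm_normal_form_third_order_pt} you compare an \emph{upper} bound on $R_{sym}(T)$ with an \emph{upper} bound on $R(T)$, which proves nothing: from $R(T)\le R_{sym}(T)\le 4$ one cannot rule out $R=3$, $R_{sym}=4$, in which case the Comon conjecture would fail for that point. What you need is the exact tensor rank of the specific symmetric representatives, i.e.\ the lower bounds. For $x^2y+z^3$ with $x,y,z$ independent the tensor rank is exactly $4$ because this tensor lies in the dense orbit of $\ptstypeii$ (row $38$ of Table~\ref{table_orbits_general}); for $x^2y+xz^2$ it is exactly $5$ by Proposition \ref{lastprop} (row $37$). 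Once those exact values are in hand, the symmetric decompositions you propose do close the argument, and they reproduce the symmetric ranks $4$ and $5$ computed in \cite{LTrank}. You should also record the strata $T\in\hat\s_2(v_3(\BP V))$, which the corollary covers: there the only nontrivial case is $x^2y$, with symmetric rank $3$ by Sylvester and tensor rank $3$ by Proposition \ref{lastthm}. Finally, in the type \ref{item_main_thm_normal_form_honest_secant} case, note that uniqueness of the rank-three decomposition only forces the $S_3$-action to permute the three summands; a short extra argument is needed to see that each summand is individually fixed, hence symmetric.
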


Corollary \ref{comoncor} follows by comparing the normal forms and ranks of this paper with those
of \cite{LTrank}.
\smallskip

In \S\ref{gencomin} we generalize Theorem \ref{s3nformthm} to generalized cominuscule varieties,
a class of homogeneous varieties which includes Grassmannians and spinor varieties.
See \S\ref{gencomin} for the definition of a generalized cominuscule variety,  and \S 2 for the definition of the second fundamental form $II$.

\begin{theorem}\label{thm_points_in_Sigma} Let $X\subset \BP V$ be generalized cominuscule.
   Then $[\inV{p}] \in \sigma_3(X)$
   if and only if
   at least
   one of the following situations occurs:
   \begin{itemize}
    \item[\ref{item_main_thm_normal_form_honest_secant}] 
           $\inV{p} = \inV{\xi} + \inV{\eta} +\inV{\zeta}$
           for some linearly independent $\inV{\xi}, \inV{\eta}, \inV{\zeta} \in \hat X$ ($\inV{p}$ is on an honest $3$-secant plane),
    \item[\ref{item_main_thm_normal_form_point_plus_tangent}] 
           $\inV{p} = \inV{\xi'} +\inV{\eta}$ for some $\inV{\xi}, \inV{\eta} \in \hat X^0$
           and $\inV{\xi'} \in \hat{T}_{[\inV{\xi}]} X$,
    \item[\ref{item_main_thm_normal_form_third_order_pt}]
           $\inV{p} = \inV{\xi'} + II( (\eta')^2)$ for some $\inV{\xi} \in \hat X$,
           $\inV{\xi'} \in  {\hat T}_{[\inV{\xi}]} X$,  $\inV{\eta'} \in  {T}_{[\inV{\xi}]} X$, or
    \item[\ref{item_main_thm_normal_form_two_tangents}]  
           $\inV{p} = \inV{\xi'} + \inV{\eta'}$ for some $\inV{\xi},\inV{\eta}  \in \hat X$,
           $\inV{\xi'}  \in  { \hat T}_{[\inV{\xi}]} X$,
           $\inV{\eta'}  \in  {\hat  T}_{[\inV{\eta}]} X$ with the line $\BP \langle \xi,\eta\rangle$ contained in $X$.
   \end{itemize}
To make sense of elements of the tangent and normal spaces as elements of $V$ we
have chosen a splitting  $V=\hat x\op T\op N$  as described in
   \S\ref{taylorsect}.
\end{theorem}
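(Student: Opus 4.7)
The plan is to prove both directions of the equivalence. The reverse direction is straightforward: in each of (i)--(iv) one writes $p$ as an explicit limit of honest $3$-secant planes. For (ii), take a stationary point at $\eta$ together with two points on a curve $\xi(t) \subset \hat X$ collapsing at $\xi$; for (iii), take three points collapsing along a common tangent direction $\eta'$ with coefficients of order $1/t^2$ tuned so that the first-order tangent contributions cancel and the surviving $t^2$ term yields $\xi' + II((\eta')^2)$; for (iv), take a tangent curve at $[\xi]$, a tangent curve at $[\eta]$, and a third point collapsing onto the line $\BP\langle \xi,\eta\rangle \subset X$.

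For the forward direction, take $[p] \in \sigma_3(X)$ and write $p = \lim_{t \to 0}\bigl(c_1(t)\xi(t) + c_2(t)\eta(t) + c_3(t)\zeta(t)\bigr)$ for curves $\xi,\eta,\zeta \subset \hat X$ and scalar functions $c_i(t)$. Stratify by the limiting configuration of $[\xi(0)], [\eta(0)], [\zeta(0)] \in X$: if all three are distinct and linearly independent, case (i) applies; if exactly two coincide at a point $[\xi_0]$, then subtracting the uncollapsed contribution and rescaling reduces the analysis to a single colliding pair, whose Taylor expansion produces a tangent vector at $[\xi_0]$, yielding case (ii); the remaining possibility is that all three collapse to one point $[\xi_0]$.

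In this last case, fix the splitting $V = \hat\xi_0 \oplus T \oplus N$ from \S\ref{taylorsect} and use the defining structural feature of a generalized cominuscule variety: curves on $\hat X$ admit Taylor expansions of the form $x(t) = \xi_0 + t\alpha(t) + \tfrac12 t^2\, II(\alpha(t),\alpha(t)) + O(t^3)$ in which all higher fundamental forms vanish, so every normal-direction contribution is expressible through $II$ applied to tangent vectors. Substituting this into the expression for $p$ and carrying out the leading-order analysis, the surviving first-order contribution in $T$ is a tangent vector $\xi'$, and the surviving second-order contribution in $N$ is a polynomial in $II$ applied to the tangent directions of the three curves. Either this yields $II((\eta')^2)$ for a genuine tangent direction $\eta'$, giving case (iii), or the tangent directions that actually contribute all lie in a subspace on which $II$ degenerates.

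The main obstacle is this last dichotomy and its geometric interpretation. One must show that in the degenerate subcase, the tangent directions involved necessarily arise as tangents to a line through $[\xi_0]$ contained in $X$, which then matches case (iv) after recognizing that the combination of the two surviving tangent contributions assembles as $\xi' + \eta'$ with $\xi'\in \hat{T}_{[\xi]}X$, $\eta'\in \hat{T}_{[\eta]}X$, and $\BP\langle\xi,\eta\rangle\subset X$. This rests on a characterization, specific to generalized cominuscule $X$, of the kernel of $II$ at a point: by homogeneity it can be identified with the directions tangent to lines through the base point lying in $X$, so any degenerating second-order contribution automatically produces a configuration of the form in case (iv). Combining this structural fact with the Taylor analysis gives the four-way classification, and uniqueness of the type (up to closure inclusions already noted in Theorem~\ref{s3nformthm}) follows from the exclusivity of the limiting configurations just described.
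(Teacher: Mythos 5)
There are genuine gaps in your forward direction. First, your stratification by the coincidence pattern of the three limit points is neither exhaustive nor decisive. You omit the configuration in which the three limit points are distinct but collinear; for a generalized cominuscule variety any trisecant line lies in $X$, so this configuration really occurs and cannot be folded into case (i). Worse, the coincidence pattern does not determine the type: when the line joining two of the limit points is contained in $X$ (equivalently $II$ vanishes on the square of the relevant tangent direction), a limit with exactly two coinciding points, or with three distinct collinear points, can produce a point of type (iii) or (iv) rather than (ii). The paper's proof is organized not by which limit points coincide but by the orders of vanishing $k,l$ of the two moving curves (after using homogeneity to freeze one curve, Lemma~\ref{lem_one_curve_is_constant}) together with the condition $II(\prj{v_0}^2)=0$ versus $\ne 0$; that is the dichotomy that actually separates the cases, and your argument never isolates it except in the all-collapse stratum.

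Second, your structural claim about generalized cominuscule varieties is false as stated: the higher fundamental forms do \emph{not} vanish (for $G(k,n)$ one has $\FF_s(M^s)=\Wedge{s}M$ for $s$ up to $k$; for an $n$-factor Segre they are nonzero up to degree $n$). What holds is only that the Fubini forms reduce to fundamental forms, i.e.\ $F_s=\FF_s$ maps into a summand $N_s$. One must therefore \emph{prove} that the terms $\FF_s$, $s\ge 3$, do not affect the leading term of $\baseptaff\wedge y(t)\wedge z(t)$; this is exactly Lemma~\ref{lem_vanishing_of_II_gives_vanishing_of_F_s} combined with the prolongation property, and it is not automatic. Finally, your concluding dichotomy (``either $II((\eta')^2)$ for a genuine direction, or the directions lie where $II$ degenerates'') is too coarse: the surviving normal contribution is in general only an element of $\overline{II}$, the closure of the image of $v\mapsto II(v^2)$, and one needs Lemmas~\ref{lem_alternative_iv} and~\ref{lem_form_of_points_in_closure_of_II} — in particular the closedness of the type~\ref{item_main_thm_normal_form_two_tangents} locus — to conclude that boundary elements of $\overline{II}$ with higher-order degeneration ($m>1$ in the paper's notation) still yield points of type~\ref{item_main_thm_normal_form_two_tangents}. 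Identifying $\ker II$ with tangents to lines through the base point is correct and is used in the paper, but by itself it does not close this last step.
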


\subsection{Overview}
In \S\ref{diffgsect} we review facts from projective differential geometry.
In \S\ref{gencomin} we prove Theorem \ref{thm_points_in_Sigma}.
In \S\ref{s3exsect} we apply Theorem \ref{thm_points_in_Sigma} to cominuscule varieties,
   including     Grassmannians  and   spinor varieties.
In  \S\ref{smallsecsegsect} we analyze the case of the Segre variety  in detail,
  and we give two  proofs of Theorem~\ref{s3nformthm}, a short proof by computing the
Lie algebras of the stabilizers of the points $p_{(*)}$, and a longer   proof that contains
more precise information which is of  interest in its own right.
In \S\ref{sec_ranks_and_orbits} we restrict attention to the three-factor Segre variety,
   and   prove   Theorems~\ref{txsmooth}, \ref{s3sing} and \ref{lastcor}.

\subsection{Acknowledgments}
We thank   M. Mohlenkamp for   pointing out
an error  in an  earlier version of this article,
 related to  the rank of
$y+y'+y''$ in Theorem~\ref{lastcor}.
This paper grew out of questions raised at the
2008 AIM workshop {\it Geometry and representation theory of tensors
for computer science, statistics and other areas}, and the authors
thank AIM and the conference participants for inspiration.
The mathematics in this paper was finally completed while the authors were
guests at the Mittag-Leffler Institut in Spring 2011 and we gratefully
thank the institute for providing a wonderful environment for
doing mathematics.
 We truly appreciate the help of the referee,
      his careful proof reading of the article,
      and his many thoughtful comments.

\section{Curves in submanifolds of projective space}\label{diffgsect}

\subsection{Fubini forms, fundamental forms, and the prolongation property}\label{taylorsect}
Let $X^n\subset  \BP V$ be a subvariety and let $\basept \in X$ be a smooth point.
We may choose a splitting
\be\label{splitting}
V = \baseptline \oplus T \oplus N,
\ene
  such that $\baseptline \simeq \CC$
  is the one dimensional linear subspace corresponding to $\basept \in \PP V$,
  and $\baseptline \oplus T$ is the affine tangent space $\hat T_{\basept} X$.

We will abuse notation and identify $T$ with  the Zariski tangent space
$T_\basept X= \hat o^*\ot (\hat T_{\basept}X/\hat o)$ and $N$ with the normal space
$N_\basept X:=T_{\basept}\BP V/T_{\basept}X$. Since we are working at a point, the twist by the line
bundle will not matter. Our choice of splitting will not effect the end
results of the calculations.

Any  point  $[\inV{v}] \in \PP V$ has a  lift to a point $\inV{v} \in V$
    of the form  $(\baseptaff, \prj{v}, \nrml{v})$ or $(0, \prj{v}, \nrml{v})$,
  where $0$ and $\baseptaff$ are points in $\baseptline \simeq \CC$,
  and $\prj{v} \in T$, $\nrml{v} \in N$.
In an analytic neighborhood of $\basept$ we may write $X$ as a graph, that is,  for $\inV{x} \in X$ near $\basept$,
the vector  $\nrml{x}$ depends holomorphically on the vector  $\prj{x}$ and we expand this holomorphic map into a Taylor series:
\begin{equation}\label{taylore}
\nrml{x}=\nrml{x}(\prj{x})=
II_{\basept}(\prj{x}^2) + F_{3, \basept}(\prj{x}^3) + F_{4, \basept}(\prj{x}^4)+\dotsb
\end{equation}
 Here $\prj{x}\in T$ and $\prj{x}^s\in S^sT$.
 Later we will study curves $x(t) \in X$, and express the whole curve using \eqref{taylore},
writing $\prj{x}(t)$ to be the curve in $T$,  $\prj{x}^s(t) \in S^sT$.
Note that by our choice of splitting there is no constant or linear term in  \eqref{taylore}.
The quadratic part $II_{\basept}= F_{2,\basept}$ gives rise to a well defined tensor in $S^2 T^*_{\basept}X\ot N_{\basept} X \simeq S^2 T^*\ot N$,
   called the \emph{second fundamental form}.
Further, $F_{s,{\basept}} \in S^sT^*\ot N$
  are called the {\it Fubini forms},
  but they depend on the choice of splitting $V = \baseptline \oplus T \oplus N$.
See \cite[Chap. 3]{IvL} for more details.

\smallskip

One can extract   tensors from the Fubini forms, called  \emph{fundamental forms}.
Let
$$N_{s,{\basept}}:=
N_{\basept}X\tmod \tim(F_{2,{\basept}}\hd F_{s-1,{\basept}}),
$$
the tensor   $\BF_{s,{\basept}}:= \bigl( F_{s,{\basept}} \tmod \tim(F_{2,{\basept}}\hd F_{s-1,{\basept}})\bigr)
             \in S^sT^*_{\basept}X\ot  N_{s,{\basept}}$
  is well-defined (independent of the choice of splitting \eqref{splitting}) and called the {\it $s$-th fundamental form} of $X$ at~${\basept}$.
Fundamental forms satisfy a {\it prolongation property} (see \cite[Chap. 3]{IvL}):
if ${\basept}\in X$ is a general point, then for all $f_1\in S^{s_1} T$ and $f_2 \in S^{s_2} T$ we have
\begin{equation}\label{equ_prolongation}
  \BF_{s_1,{\basept}}(f_1) = 0  \Longrightarrow \BF_{s_1+s_2,{\basept}}(f_1 f_2) =0.
\end{equation}
We write $III_{\basept}=\BF_{3,\basept}$.
If there is no risk of confusion, we will often omit the base point and write $II := II_{\basept}$, $F_s:= F_{s,\basept}$, etc.

\subsection{When taking limits, we may assume one curve is stationary}
\begin{lemma}\label{lem_one_curve_is_constant}
   Let $G$ be a connected algebraic group and $P$ a parabolic subgroup.
   Let $X=G/P\subset \BP V$ be a homogeneously embedded homogeneous
variety and let  $p \in \s_r(X)$. Then there exist a point $\inV{\xi} \in  \hat{X}$
       and $r-1$ curves $\inV{y_j}(t) \in \hat X $  such that
       $p \in  \lim_{t \to 0} \langle \inV{\xi}, \inV{y_1}(t)\hd  \inV{y_{r-1}}(t)\rangle$.
\end{lemma}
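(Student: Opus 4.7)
The plan is to exploit $G$-homogeneity to translate one of the $r$ moving curves back to a constant point, while leaving the limit of the spans in the Grassmannian $Gr(r,V)$ unchanged. Unpacking the definition of $\s_r(X)$, one begins with curves $x_1(t), \ldots, x_r(t) \in \hat X$ (defined for $t$ near $0$) whose limiting $r$-plane
\[
  L_0 := \lim_{t \to 0} \langle x_1(t), \ldots, x_r(t)\rangle \in Gr(r,V)
\]
contains $\hat p$. The goal is to replace this family by one of the form $\langle \xi, y_1(t), \ldots, y_{r-1}(t)\rangle$ with $\xi \in \hat X$ constant and $y_j(t) \in \hat X$, having the same limit.

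First I would lift the curve $[x_1(t)] \subset X$ to a curve $h(t) \in G$ with $h(0) = \id$ and $h(t) \cdot [x_1(t)] = [x_1(0)]$. Since $X = G/P$ is a quotient by a parabolic subgroup, the projection $\pi \colon G \to X$ is a locally trivial principal $P$-bundle in the analytic category, so there is an analytic local section $s$ near $[x_1(0)]$ with $s([x_1(0)]) = \id$; then $h(t) := s([x_1(t)])^{-1}$ does the job.

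Next, since $G$ acts linearly on $V$ and preserves the affine cone $\hat X$ (as $X \subset \BP V$ is homogeneously embedded), there is a scalar $\mu(t)$ with $\mu(0)=1$ (hence nonzero for $t$ near $0$) satisfying $h(t) \cdot x_1(t) = \mu(t)\, \xi$, where $\xi := x_1(0) \in \hat X$. Applying $h(t)$ to the span then yields
\[
  h(t) \cdot \langle x_1(t), \ldots, x_r(t)\rangle = \langle \xi, y_1(t), \ldots, y_{r-1}(t)\rangle,
\]
with $y_j(t) := h(t) \cdot x_{j+1}(t) \in \hat X$.

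Finally, continuity of the $G$-action on $Gr(r,V)$ together with $h(t) \to \id$ implies that this new family of $r$-planes has the same limit $L_0$, which still contains $\hat p$. The only real technical point is the existence of the analytic local section of $\pi$; once this is in hand, the rest is a routine bookkeeping of how the $G$-action interacts with the limit of spans.
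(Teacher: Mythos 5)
Your proposal is correct and follows essentially the same route as the paper: translate the first curve back to its limit point by a curve $g_t\in G$ with $g_0=\id$, observe that applying $g_t$ to the spans does not change the limiting $r$-plane, and rename the translated remaining curves. You supply slightly more detail than the paper (the local analytic section of $G\to G/P$ and the scalar $\mu(t)$ on the cone), but the underlying idea is identical.
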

\begin{proof}
   Since $p \in \s_r(X)$, there exist $r$ curves $\inV{x}(t), \inV{y_1}(t)\hd  \inV{y_{r-1}}(t) \in \hat{X}$
      such that
$$\inV{p} \in \lim_{t \to 0} \langle \inV{x}(t), \inV{y_1}(t)\hd  \inV{y_{r-1}}(t)\rangle.$$
   Choose  a curve $g_t \in G$, such that $g_t(\inV{x}(t)) = \inV{x}_0 = \inV{x}(0)$ for all $t$  and $g_0=Id$.
  We have  \begin{align*}
     \langle \inV{x}(t), \inV{y_1}(t)\hd  \inV{y_{r-1}}(t)\rangle & =
         {g_t}^{-1} \cdot \langle \inV{x}_0, g_t \cdot\inV{y_1}(t)\hd  g_t \cdot\inV{y_{r-1}}(t)\rangle \text{ and}\\
     \lim_{t \to 0} \langle  \inV{x}(t), \inV{y_1}(t)\hd  \inV{y_{r-1}}(t)  \rangle & =
         \lim_{t \to 0} \bigl({g_t}^{-1} \cdot \langle \inV{x}_0, g_t \cdot\inV{y_1}(t)\hd  g_t \cdot\inV{y_{r-1}}(t)\rangle \bigr) \\
       &   {=}
          \lim_{t \to 0}  \langle \inV{x}_0, g_t \cdot\inV{y_1}(t)\hd  g_t \cdot \inV{y_{r-1}}(t) \rangle . \\
  \end{align*}
  Set  $\inV{\xi} = \inV{x_0}$   and appropriately modify the  $\inV{y_j}(t)$
to complete the proof.
\end{proof}

We remark, that for non-homogeneous $X$, an analogous statement is rarely true.
If $r=2$, and $X$ is smooth, then it is true, see Proposition~\ref{prop_case_r=2}.
But already if $r=2$ and $X$ is singular, one often needs both curves moving
   (a cuspidical rational curve embedded in $\PP^3$ is an example).
Also if $r=3$, and $X$ has a trisecant line
  (for example $X$ is a high degree rational normal curve projected from a general point on a trisecant plane),
  then one  also needs three curves moving to obtain some of the points on the third secant variety.

\subsection{Dimension counting and higher order invariants}
Since $\tdim \s_r(X)\leq r\tdim X+r-1$, one can use a parameter
count to see what one expects in choosing a point of the boundary.
Suppose $\tdim X>1$, $X$ is not a cone and
the third fundamental form
  is nonzero --- for example $X= Seg(\BP A\times \BP B\times \BP C)$.
One can predict that the third fundamental form does not arise when computing
   a point of $\s_3(X)$   which is  on a plane obtained as a limit of spans of $3$ points converging to the same  general point of $X$.
This is because the third fundamental form is only well defined modulo the second osculating space,
  which will have dimension greater than $\tdim X$.
In the case of the three factor Segre variety
  the second  osculating space has dimension $\aaa\bbb+\aaa\ccc+\bbb\ccc$,
  and the third fundamental form is only well defined modulo
  the second osculating space.
So were there a term  $III(v^3)$  appearing in an expression
  for a point on $\s_3(Seg(\BP A\times \BP B\times \BP C))$, with no restrictions
  on $v$, then the resulting variety would
  have to have dimension at least $\aaa\bbb+\aaa\ccc+\bbb\ccc$
  for the term to be well defined.
If the dimensions of the vector spaces are sufficiently large, this contradicts the dimension count.
Such heuristics can be useful in calculations.

The following lemma will allow us to  eliminate higher fundamental forms from our considerations
  when studying $\s_3(X)$.
It illustrates the dimension counting principle.

\begin{lemma}\label{lem_vanishing_of_II_gives_vanishing_of_F_s}
Let $X\subset \BP V$ be a variety and let $\basept\in X$ be a general point. Adopt the notations
of \S\ref{taylorsect}.
  Suppose $\prj{v}(t) \subset T$ is a curve such that $II(\prj{v}(t)^2)$ vanishes at $t=0$ up to order $m-1$,
    that is $II(\prj{v}(t)^2) = t^m (\cdots)$.
  If $m>0$ and $s\ge 2$, then $\FF_s(\prj{v}(t)^s)$ vanishes at $t=0$ up to order $m+s-3$,
 that is $\FF_s(\prj{v}(t)^s)= t^{m+s-2} (\cdots)$.
\end{lemma}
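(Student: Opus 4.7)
The plan is to recast the statement as a claim about Taylor coefficients and then apply the prolongation property. Writing $\prj{v}(t) = \sum_{i \ge 0} t^i v_i$ in $T$, set
\[
V_k := \sum_{i+j=k} v_i v_j \in S^2 T, \qquad W^{(s)}_\ell := \sum_{i_1+\dots+i_s=\ell} v_{i_1}\cdots v_{i_s} \in S^s T,
\]
the Taylor coefficients of $\prj{v}(t)^2$ and $\prj{v}(t)^s$ respectively. The hypothesis is equivalent to $II(V_k) = 0$ for $0 \le k \le m-1$, and the conclusion is equivalent to $\FF_s(W^{(s)}_\ell) = 0$ for $0 \le \ell \le m+s-3$. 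By the prolongation property \eqref{equ_prolongation}, $\FF_s$ vanishes on any product $V_k \cdot g$ with $k \le m-1$ and $g \in S^{s-2} T$, so it suffices to prove the purely algebraic claim that $W^{(s)}_\ell$ lies in the ideal $I := (V_0, \dots, V_{m-1}) \subset S^\bullet T$ in degree $s$ for every $\ell \le m+s-3$.

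I would prove this algebraic claim by induction on $s$, with base case $s=2$ being immediate from $W^{(2)}_\ell = V_\ell$. For the inductive step, the factorization $\prj{v}(t)^s = \prj{v}(t)\cdot \prj{v}(t)^{s-1}$ gives $W^{(s)}_\ell = \sum_{i+j=\ell} v_i\, W^{(s-1)}_j$, and the inductive hypothesis places $W^{(s-1)}_j \in I$ for every $j \le m+s-4$. The only uncovered contribution is the boundary term $v_0 \cdot W^{(s-1)}_{m+s-3}$, which appears only at $\ell = m+s-3$. To handle it I would switch to the alternative factorization $\prj{v}(t)^s = \prj{v}(t)^{s-2}\cdot \prj{v}(t)^2$, rewriting $W^{(s)}_{m+s-3} = \sum_{i+k=m+s-3} U^{(s-2)}_i\, V_k$. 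Terms with $k \le m-1$ lie in $I$ automatically; for $k \ge m$ one has $i \le s-3$ and every monomial of $U^{(s-2)}_i$ is divisible by $v_0^{s-2-i}\ge v_0$. Monomials with $v_0^{\ge 2}$ are absorbed by $v_0^2 = V_0 \in I$, and the lone remaining monomial $v_0\, v_1^{s-3}$ (occurring for $i = s-3$) contains the factor $v_0 v_1 = \tfrac12 V_1 \in I$ whenever $s \ge 4$ and $m \ge 2$.

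The edge case $m=1$ is trivial: every monomial of $W^{(s)}_\ell$ with $\ell \le s-2$ has $v_0$-degree at least $s-\ell\ge 2$, so $W^{(s)}_\ell \in (V_0) = I$. What remains is the genuinely delicate case $s=3$ with $m \ge 2$, which reduces to the algebraic identity $v_0 V_m \in (V_0, \dots, V_{m-1})$. I expect to prove this by a nested induction on $m$, using the Pl\"ucker-type identities $v_i V_j - v_j V_i \in I$ (valid for $i,j \le m-1$) to rewrite the $v_0$-free cubic residue of $v_0 V_m$ as a combination of the $V_k$'s and thereby close the induction. The main obstacle of the entire argument lies precisely in this $s=3$ boundary case, since naive prolongation alone leaves a $v_0$-free cubic residue and one must combine several Pl\"ucker identities to eliminate it.
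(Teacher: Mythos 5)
Your reduction of the lemma to the purely algebraic claim that $W^{(s)}_\ell$ lies in the degree-$s$ part of $I=(V_0,\dots,V_{m-1})$ for $\ell\le m+s-3$ is sound, and your case analysis disposes of everything except the case you yourself flag as the main obstacle: $s=3$, $m\ge 2$, i.e.\ the claim $v_0V_m\in(V_0,\dots,V_{m-1})$. That claim is true, but what you offer for it is a hope, not a proof, and the tool you name does not obviously supply one. The ``Pl\"ucker-type identities'' $v_iV_j-v_jV_i\in I$ with $i,j\le m-1$ are vacuous as stated --- each of $v_iV_j$ and $v_jV_i$ already lies in $I$ separately --- and the naive unweighted manipulations genuinely go in circles: writing $2v_0v_i\equiv -\sum_{a+b=i,\,a,b\ge1}v_av_b \pmod I$ for $i\le m-1$ and substituting into $v_0V_m$, or summing $\sum_{c=1}^{m}v_cV_{m-c}\in I$, both produce the \emph{same} single relation between $v_0V_m$ and the $v_0$-free cubic $\sum_{a+b+c=m,\,a,b,c\ge1}v_av_bv_c$, so you cannot solve for either. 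A ``nested induction on $m$'' also gives nothing for free, since the statement for $m-1$ only reasserts $v_0V_{m-1}\in I$, which is already trivial.

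The missing ingredient is a \emph{weighted} identity, and it is exactly the paper's device: differentiate in $t$. From $\tfrac{d}{dt}\bigl(\prj{v}(t)^3\bigr)=3\,\prj{v}'(t)\,\prj{v}(t)^2$ one reads off $m\,W^{(3)}_m=3\sum_{i=1}^{m}i\,v_iV_{m-i}$, in which every term lies in $I$ because $m-i\le m-1$; hence $W^{(3)}_m\in I$ for $m\ge1$ and $v_0V_m=W^{(3)}_m-\sum_{i=1}^{m}v_iV_{m-i}\in I$, explicitly $v_0V_m=\sum_{i=1}^{m}\tfrac{3i-m}{m}\,v_iV_{m-i}$. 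Once you have this, note that the same trick makes your entire case analysis unnecessary: the paper works in $S[[t]]$ with $\ccJ_k$ the ideal generated by $\ccI$ and $t^k$, and inducts on $s$ via $\partial_t(\prj{v}^s)=s\,\prj{v}'\,\prj{v}^{s-1}\in\ccJ_{m+s-3}$, recovering $\prj{v}^s\in\ccJ_{m+s-2}$ from the derivative together with the constant term $\prj{v}_0^{\,s}=\prj{v}_0^{\,s-2}V_0\in\ccI$, uniformly in $s$, $\ell$ and $m$. I would either adopt that argument outright or at minimum replace your $s=3$ paragraph with the displayed weighted identity.
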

\begin{proof}
  Let $\ccI^d:=\set{f \in S^d T \mid \FF_d(f) =0}$.
  Since $\ccI^d$ is a linear subspace of $S^d T$,
     the prolongation property \eqref{equ_prolongation} implies  $\ccI^{d_1} \cdot S^{d_2} T \subset \ccI^{d_1+d_2}$.
  Thus, if $S:=\bigoplus_{d=0}^{\infty} S^d T$ is the symmetric algebra,
    and $\ccI := \bigoplus_{d=0}^{\infty} \ccI^d$, then $\ccI$ is a homogeneous ideal.

  Consider $S[[t]]$,  the   power series ring with coefficients in  $S$,   and let $\ccJ_k$
    be the ideal generated by $\ccI$ and $t^k$.
  The curve $\prj{v} = \prj{v}(t) = \prj{v}_0 + t \prj{v}_1 + t^2 \prj{v}_2 +\dotsb$
     is naturally an element in $S[[t]]$.
  In this interpretation,
    $\FF_s(\prj{v}(t)^s) = t^k   (\cdots)$  if and only if $\prj{v}(t)^s \in \ccJ_{k}$.
  In particular, our assumptions are:
  \begin{itemize}
     \item  $\prj{v}(t)^2 \in \ccJ_{m}$ and
     \item  the constant coefficient $\prj{v}_0^2 \in \ccI$ (because $m >0$), thus also $\prj{v}_0^s \in \ccI$ for $s \ge 2$.
  \end{itemize}
  To show  that $\prj{v}(t)^s \in \ccJ_{m+s-2}$ for $s\ge 2$,
  we argue by induction on $s$.
Consider $\frac{\partial}{\partial t} \left(\prj{v}(t)^s\right) = s \frac{\partial \prj{v}}{\partial t} \prj{v}^{s-1}$.
  By the inductive assumption,  $\prj{v}^{s-1}\in \ccJ_{m+s-3}$,
    so    $\frac{\partial}{\partial t} \left(\prj{v}(t)^s\right) \in \ccJ_{m+s-3}$.
  Since the constant coefficient $\prj{v_0}^s \in \ccI$, it follows,
    that $\prj{v}(t)^s \in \ccJ_{m+s-2}$ as claimed.
\end{proof}

\subsection{Points on $\s_2(X)$}

We reprove the standard fact that a point
on a secant variety  to a smooth variety $X$  is either  on $X$, on
an  honest secant line, or on a tangent line to $X$.
The proof we present prepares the way for new results.
Recall that if a point of $\s_2(X)$ is not on an
honest secant line, it must arise from a point on a limiting $\pp 1$ which is obtained
by a curve of $\pp 1$'s,  $\langle x(t), y(t)\rangle$ where $[x(0)]=[y(0)]$.

\begin{proposition}\label{prop_case_r=2}
Let $X\subset \BP V$ be a smooth variety and let $[z]\in \s_2(X)\backslash \s_2(X)^0$. Then $z$
may be obtained from first order information, that is,
 $z=u'$ for some  $[u]\in X$ and $u'\in \hat T_{[u]}X$.
\end{proposition}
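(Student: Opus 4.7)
Plan: My approach follows the classical dichotomy---either the two curves $x(t), y(t) \in \hat X$ whose span approaches $[z]$ limit to distinct points of $X$ (giving an honest secant line) or to the same point $[u]$ (giving a tangent line). The interesting case is the second, where I would use the Taylor expansion of $\hat X$ at $u$ from \S\ref{taylorsect} to show that the limiting line lies in $\hat T_{[u]}X$.

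More concretely, write $z = \lim_{t\to 0}(a(t)x(t)+b(t)y(t))$ for curves $x(t),y(t)\in\hat X$ and scalar-valued functions $a(t),b(t)$ chosen so this limit exists. If $[x(0)] \ne [y(0)]$, the limiting span is the honest secant line $\langle x(0),y(0)\rangle$, so $[z]$ has $X$-rank at most $2$. Since $[z]\notin\s_2^0(X)$ by hypothesis, this forces $[z]\in X$, and the conclusion is trivial (take $u=z$, $u'=z\in\hat T_{[u]}X$). So assume $[x(0)]=[y(0)]=[u]$; after rescaling the affine representatives by analytic scalar functions we may take $x(0)=y(0)=u$.

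For this remaining case I would adopt the splitting $V=\hat u\oplus T\oplus N$ and write each curve in normalized Taylor form $x(t)=(1,\tilde x(t),f(\tilde x(t)))$ and $y(t)=(1,\tilde y(t),f(\tilde y(t)))$, where the graphing map $f(\tilde v)=II(\tilde v^2)+F_3(\tilde v^3)+\dotsb$ has no constant or linear term. Let $k\ge 1$ be the order of vanishing at $t=0$ of $\tilde y(t)-\tilde x(t)$; this is finite provided the two projective curves are distinct (and if they happen to coincide, one can replace $y(t)$ by an arbitrarily close curve inside $\hat X$ without changing the limit $z$). Then
\[
\frac{y(t)-x(t)}{t^k} \;=\; \Bigl(0,\ \frac{\tilde y(t)-\tilde x(t)}{t^k},\ \frac{f(\tilde y(t))-f(\tilde x(t))}{t^k}\Bigr),
\]
and by construction its limit has $T$-component some nonzero $w\in T$.

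The crucial estimate is that the $N$-component tends to zero: since $f(0)=0$, $df(0)=0$, and $\tilde x(t),\tilde y(t)=O(t)$, a standard Taylor-remainder argument gives $f(\tilde y(t))-f(\tilde x(t))=O(t\cdot t^{k})=O(t^{k+1})$. Consequently $\lim_{t\to 0}(y(t)-x(t))/t^k=(0,w,0)\in T$, and the limiting line
\[
\lim_{t\to 0}\langle x(t),y(t)\rangle \;=\; \langle u,(0,w,0)\rangle \;\subset\; \hat u\oplus T \;=\; \hat T_{[u]}X
\]
is a tangent line at $[u]$. Since $z$ lies on this limit line, $z=u'$ for some $u'\in\hat T_{[u]}X$, which is the desired conclusion. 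The main technical point is the Taylor-remainder estimate on the $N$-component, which amounts to the classical observation that the normal graph $f$ has vanishing linear term at the origin---equivalently, that $\hat X$ is tangent to $\hat u\oplus T$ at $u$; the rest is bookkeeping with the splitting.
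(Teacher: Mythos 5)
Your proposal is correct and follows essentially the same route as the paper: reduce to the case $[x(0)]=[y(0)]=[u]$, choose the splitting $V=\hat u\oplus T\oplus N$, let $k$ be the order of vanishing of $\tilde y(t)-\tilde x(t)$, and observe that since the graphing map has no constant or linear term the $N$-component of $(y(t)-x(t))/t^k$ vanishes in the limit, so the limiting plane is $\langle u,w\rangle\subset\hat T_{[u]}X$. The paper realizes your Taylor-remainder estimate by factoring $\tilde y^s-\tilde x^s=(\tilde y-\tilde x)(\tilde x^{s-1}+\dots+\tilde y^{s-1})$ inside each Fubini form, which is the same bound; the only content it adds beyond yours is the closing remark that the same limiting plane is achieved with one of the two curves held stationary.
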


\begin{proof} There exist curves
  $[\inV{x}(t)],[\inV{y}(t)]\subset X$   with
  $\inV{x}(0) =  \inV{y}(0) =\baseptline \in \basept \setminus \set{0}$,
  such that  $[z]$   may be obtained
  as a point of the limiting $\pp 1=\BP(\tlim_{t\ra 0} \langle \inV{x}(t),\inV{y}(t)\rangle)$.

   Consider a splitting $V = \hat \basept \oplus T \oplus N$
       and the   curves $\prj{x}(t), \prj{y}(t) \in T$ as above.
   Write:
   \begin{align*}
      \prj{x}(t) & =   \prj{x_1} t + \prj{x_2} t^2 + \dotsb + \prj{x_{k-1}} t^{k-1} + \prj{x_k} t^{k} + \prj{x_{k+1}} t^{k+1} + \dotsb\\
      \prj{y}(t) & =   \prj{x_1} t + \prj{x_2} t^2 + \dotsb + \prj{x_{k-1}} t^{k-1} + \prj{y_{k}} t^{k} + \prj{y_{k+1}} t^{k+1} + \dotsb
   \end{align*}
where $\prj{x_j},\prj{y_j}\in T$
   and   $k$ is the smallest integer such  that $\prj{v_0} := \prj{y_k}- \prj{x_k} \ne 0$.
Let $\prj{v}(t) := t^{-k}(\prj{y}(t) -\prj{x}(t)) = (\prj{y_k}-\prj{x_k}) + (\prj{y_{k+1}}-\prj{x_{k+1}}) t + \dotsc$.
   Then:
   \begin{align*}
      \inV{y}(t) &- \inV{x}(t)
           = (\baseptaff  + \prj{y}(t) + II(\prj{y}(t)^2) + F_3(\prj{y}(t)^3) + \dotsb)
            - (\baseptaff  + \prj{x}(t) + II(\prj{x}(t)^2) + F_3(\prj{x}(t)^3) + \dotsb) \\
          & = t^k \prj{v}(t) + II\left(\prj{y}(t)^2 - \prj{x}(t)^2\right) + F_3\left(\prj{y}(t)^3 - \prj{x}(t)^3\right) + \dotsb \\
          & = t^k \prj{v}(t) + II\left((\prj{y}(t) - \prj{x}(t))(\prj{x}(t)+\prj{y}(t))\right) + F_3\left((\prj{y}(t) - \prj{x}(t))(\prj{x}(t)^2 + \prj{x}(t)\prj{y}(t) + \prj{y}(t)^2)\right) + \dotsb \\
          & = t^k \prj{v}(t) + II\left(t^k \prj{v}(t) (\prj{x}(t)+\prj{y}(t))\right) + F_3\left(t^k \prj{v}(t)(\prj{x}(t)^2 + \prj{x}(t)\prj{y}(t) + \prj{y}(t)^2)\right) + \dotsb
   \end{align*}
   Since $\prj{x}(t)$ and $\prj{y}(t)$ have no constant terms, we obtain:
   \begin{align*}
      \inV{y}(t) - \inV{x}(t) & = t^k \prj{v_0} + t^{k+1} (\dotsb) \text{ and} \\
      \inV{x}(t) \wedge \inV{y}(t)
           & = \inV{x}(t) \wedge (\inV{y}(t) - \inV{x}(t)) \\
           & = \left( \baseptaff + t(\dotsc) \right) \wedge  \left( t^k \prj{v_0} + t^{k+1} (\dotsb) \right)\\
           & = t^k \left( \baseptaff  \wedge   \prj{v_0} \right) +  t^{k+1}(\dotsc).
   \end{align*}
   Recall that $\prj{v_0} \ww \baseptaff \ne 0$.
   Thus the limiting affine plane $\tlim_{t\ra 0} \langle \inV{x}(t),\inV{y}(t)\rangle)$ is equal to
        $\langle\baseptaff, \prj{v_0} \rangle$.

   Set $\prj{z}(t) := t\prj{v}(t) \in T$ and $\inV{z}(t) := \baseptaff + t \prj{v}(t) + II(t^2 \prj{v}(t)^2) + \dotsb \in \hat X$.
   Then the same affine plane can be obtained as
      $\tlim_{t\ra 0} \langle \baseptaff,\inV{z}(t)\rangle$,
      thus one point is fixed and the other approaches the first one from the direction of $\prj{v_0}$.
\end{proof}

\section{Generalized cominuscule varieties: proof of theorem \ref{thm_points_in_Sigma} }\label{gencomin}

Following \cite{LWtan}, a  homogeneously embedded homogeneous variety $G/P\subset \BP V$ is called \emph{generalized
cominuscule} if there is a choice of splitting (at any point) such that the Fubini forms
reduce to fundamental forms, that is:
\begin{equation}\label{equ_splitting_for_gen_cominuscule}
 V = \baseptline \oplus T \oplus N_2 \oplus N_3 \oplus \dotsb \oplus N_f
\end{equation}
  with $F_s(S^s T) \subset N_s$ and thus $F_s = \FF_s$ for all $s \in \setfromto{2}{f}$,
  and $F_s = \FF_s = 0$ for all $s > f$.
Generalized cominuscule varieties may be characterized intrinsically
as  the homogeneously embedded $G/P$ where the unipotent radical of $P$ is abelian.    A generalized cominuscule variety is cominuscule
 if and only if   $G$ is simple and the embedding is the minimal homogeneous one.
For those familiar with representation theory, a homogeneously embedded homogeneous variety $G/P\subset \BP V$
is cominuscule if $V$ is a fundamental representation $V_{\o_i}$ where $\o_i$ is a cominuscule weight, that is,
the highest root of $\fg$ has coefficient one on the simple root $\a_i$. Generalized cominuscule varieties
are Segre-Veronese embeddings of products of cominuscule varieties.

Grassmannians $G(k,W)$, projective spaces $\PP^n$ and
   products of projective spaces in any homogeneous embedding
   (in particular, respectively, $G(k,W)$ in the Pl\"ucker embedding, Veronese varieties,  and    Segre varieties)
    are generalized cominuscule.

Throughout this section we assume $X$ is generalized cominuscule.
When studying points of $\s_3(X)$, one has to take into account
curves limiting to points on a trisecant line of $X$. When $X$ is cut out
by quadrics, as with homogeneous varieties,
any trisecant line of $X$ will be contained in $X$.
Theorem~\ref{thm_points_in_Sigma}
shows such points are already accounted for
by curves with just one or two limit points, and that higher order differential invariants
do not appear,
as was hinted at in  Lemma \ref{lem_vanishing_of_II_gives_vanishing_of_F_s}.

We commence the proof of Theorem \ref{thm_points_in_Sigma}  with an observation about the freedom of choice of splitting as in \eqref{equ_splitting_for_gen_cominuscule}.

\begin{lemma}\label{lem_can_pick_splitting}
    Let $X$ be generalized cominuscule and let $x$, $\fromto{y_1}{y_{r-1}}$ be $r$ points on $X$.
    Then there exists a choice of splitting as in \eqref{equ_splitting_for_gen_cominuscule} (so $F_s(S^sT) \subset N_s$ for all $s$),
      such that $x = \basept$ is the center of this splitting and none of the points $\fromto{y_1}{y_{r-1}}$
      lies  on the hyperplane $T\oplus N_2 \oplus N_3\oplus \dotsb$.
\end{lemma}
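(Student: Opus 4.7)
The plan is to produce the desired splitting by starting from any splitting of the form \eqref{equ_splitting_for_gen_cominuscule} centered at $x = \basept$ (which exists by the definition of a generalized cominuscule variety) and then deforming it via the action of the unipotent radical $U$ of $P := \mathrm{Stab}_G([x])$, so that the complementary hyperplane $H = T \oplus N_2 \oplus \dots \oplus N_f$ misses the finitely many points $y_1, \dots, y_{r-1}$.

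First, I note that every choice of Levi subgroup $L \subset P$ yields such a splitting: the $L$-weight grading $V = \bigoplus V_i$ has $V_0 = \hat{o}$ and $V_0 \oplus V_1 = \hat{T}_{[o]}X$, and by $L$-equivariance of the Fubini forms it satisfies $F_s(S^s T) \subset N_s$. Since any two Levi factors of $P$ are conjugate by a unique element of $U$, for each $u \in U$ the splitting $\{uV_i\}$ is again of this form, and its hyperplane is $uH_0$. It thus suffices to find $u \in U$ such that $y_i \notin uH_0$ for every $i$.

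For each fixed $i$ the set $\{u \in U : y_i \in uH_0\}$ is Zariski closed in $U$, and a finite union of proper closed subvarieties of the irreducible variety $U$ is still proper. So I need only check that for each nonzero $y \in V$ this locus is a proper subvariety, or equivalently that
\[
\bigcap_{u \in U} u H_0 = \{0\}.
\]
The intersection $W := \bigcap_u uH_0$ is evidently a $U$-invariant subspace of $H_0$. The key representation-theoretic input I will invoke is the identification $V^U = \hat{o}$; this is classical, since $V$ is the irreducible $G$-module whose highest weight line is exactly the $P$-stable line $\hat{o}$. Granting this, any nonzero $U$-invariant subspace of $V$ must contain a nonzero $U$-fixed vector (because $U$ is unipotent), hence meets $V^U = \hat{o}$, and hence contains $\hat{o}$. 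Since $\hat{o} \not\subset H_0$, this forces $W = 0$. The one technical ingredient needed is the equality $V^U = \hat{o}$, which follows from classical highest-weight theory; this is the principal step but not a real obstacle.
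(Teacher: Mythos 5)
Your proposal is correct and follows essentially the same route as the paper: both fix one admissible splitting, translate its hyperplane $H_0$ by the stabilizer of $x$ (you use the unipotent radical $U$, the paper the full parabolic $P$), and show $\bigcap u\cdot H_0=0$ because any nonzero invariant subspace of the irreducible module $V$ must contain the highest weight line $\hat{\basept}$, which is not in $H_0$. The only cosmetic differences are that you invoke Kolchin's fixed-point theorem together with $V^U=\hat{\basept}$ where the paper cites the $P$-invariance statement from Fulton--Harris, and you conclude by a finite-union-of-proper-closed-sets argument where the paper invokes Bertini.
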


\begin{proof}
    Let $G$ be the automorphism group of $X$ and $P \subset G$ be the parabolic subgroup preserving $x$.
    Let $Y \subset X\times \BP V^*$    be the set of those pairs $(\basept, H)$, where $\basept \in X$ and $H \subset V$
      is a hyperplane, such that $V = \baseptline \oplus H$
      and there exists a splitting $H= T \oplus N_2 \oplus N_3\oplus \dotsb$,
      making the splitting of $V$ as in \eqref{equ_splitting_for_gen_cominuscule}.
    Since $X$ is generalized cominuscule, $Y$ is non-empty.
    It is also $G$-invariant, under the natural action $g\cdot (x, H) = (g\cdot x, g \cdot H)$.
    Let $Y_x \subset \PP(V^*)$ be the fiber over $x$.
    It is also non-empty, because $G$ acts on $X$ transitively, and it is $P$-invariant.
    Since the Lie algebra of $P$ contains all positive root spaces,
       and $\hat x$ is the highest weight space,
       the line $\hat x$ is contained in every $P$-invariant
       linear subspace of $V$ (see, e.g., \cite[Prop.~14.13]{FH}).

    Fix $H_0 \in Y_x$ and consider
      the intersection $B:= \bigcap_{p \in P} p \cdot H_0$.
    This is a linear subspace of $V$, which is invariant under $P$.
    So either $B=0$ or $\hat x \subset B$. The latter is however impossible, as $\hat x \cap H_0 =0$ by our assumptions.
    So $B=0$.
    The set of hyperplanes $\set{p \cdot H_0 \in \PP V^* \mid  p \in P}$ is non-empty, irreducible with trivial base locus,
       so its dimension is positive and by a trivial instance of Bertini's Theorem there exists at least one hyperplane $H$
       in this set that avoids all points $\fromto{y_1}{y_{r-1}}$.
\end{proof}

Since there are only finitely many non-zero Fubini forms,  the parameterization:
\begin{align*}
  \phi \colon T & \to \hat X\\
        \prj{v} & \mapsto \baseptaff + \prj{v} + II(\prj{v}^2) + \dotsb
\end{align*}
is polynomial.

\begin{remark}\label{rem_every_pt_is_in_infty_or_in_image}
  Suppose $X$ is the closure of the image of a map
  \begin{align*}
  \phi \colon T & \to \PP V \\
        \prj{v} & \mapsto \baseptaff + \prj{v} + \nrml{v}(\prj{v})
  \end{align*}
  with $V = \baseptline \oplus T \oplus N$, $\baseptaff  \in \baseptline \setminus \set{0}$,
  and a polynomial map $\nrml{v} : T \to N$.
  Then every point $y \in X$ is either on the hyperplane $\PP(T\oplus N)$,
   or is in the image of the parameterization $\phi$.
\end{remark}

\begin{proof}
  We use the following elementary topological statement:
  Let $P$ be a topological space, let $I \subset U \subset P$
    with $I$ closed in $U$, and let $\bar{I}$ be the closure of $I$ in $P$.
  Then $\bar{I} \cap U = I$.
  To prove this, let $J \subset P$ be a closed subset such that $U \cap J = I$,
    which exists from the definition of subspace topology.
  Then $\bar{I} \subset J$, from the definition of the closure, and so
  \[
    I \subset \bar{I} \cap U \subset J \cap U = I.
  \]

  We use the statement with $P = \PP V$, $U$ the affine piece of $\PP V$,
    which is the complement of the hyperplane $\PP(T\oplus N)$,
    and $I = \phi(T)$.
  Note that $\phi(T)$ is closed in $U \simeq T \oplus N$, because it is the graph of $\nrml{v}$
    (which is a polynomial map by our assumption).
  Moreover,  $\bar{I} = X$, and so $X\cap U = I$, and $X \subset I \cup \PP(T\oplus N)$ as claimed.
\end{proof}

This implies  the following property of tangent spaces on $X$.

\begin{lemma}\label{lem_tangent_at_line}
    Let $X$ be generalized cominuscule and let $\ell \subset X$ be a line.
    Then the space  $\ccT^{\ell}:= { \hat T}_{[\inV{\xi}]} X + {\hat  T}_{[\inV{\eta}]} X$
      for any $[\inV{\xi}], [\inV{\eta}] \in \ell$ is independent of the choice of $[\inV{\xi}], [\inV{\eta}]$.
    Moreover, $\dim \ccT^{\ell}$ is constant over each irreducible component
      of the space parameterizing lines on $X$.
\end{lemma}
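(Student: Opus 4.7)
The plan is to prove independence of endpoints by a direct computation using the polynomial parameterization of $\hat X$, and to obtain constancy of $\dim \ccT^\ell$ on components of the Hilbert scheme of lines from $G$-equivariance.

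For the first assertion, fix $\xi = \basept \in \ell$ together with a splitting $V = \baseptline \oplus T \oplus N_2 \oplus \dotsb \oplus N_f$ as in \eqref{equ_splitting_for_gen_cominuscule}, so that $\hat X$ is parameterized by $\phi(u) = \baseptaff + u + \sum_{r \ge 2} F_r(u^r)$. By Remark~\ref{rem_every_pt_is_in_infty_or_in_image}, the line $\ell$ corresponds to a direction $v \in T$ with $\phi(tv) = \baseptaff + tv$ for every $t$, forcing $F_r(v^r)=0$ for all $r \ge 2$. The crucial step is that for a generalized cominuscule variety one has $F_r = \BF_r$, so the prolongation property \eqref{equ_prolongation} applied to $\BF_2(v^2) = 0$ gives
\[
F_r(v^{r-1}, w) \;=\; \BF_r\bigl(v^2 \cdot v^{r-3} w\bigr) \;=\; 0 \qquad \text{for all } r \ge 3 \text{ and all } w \in T.
\]
The higher contributions in $d\phi_{tv}$ therefore drop out, leaving $d\phi_{tv}(w) = w + 2t\,F_2(v,w)$. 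Summing the affine tangent spaces at $\basept$ and $[\phi(tv)]$ then yields
\[
\hat T_{\basept} X \;+\; \hat T_{[\phi(tv)]} X \;=\; \CC \baseptaff \oplus T \oplus \im\bigl(F_2(v, \cdot)\bigr),
\]
a subspace manifestly independent of $t \ne 0$, and hence of the second endpoint $\eta$. Since $\ccT^\ell$ is symmetric in its two arguments, re-running the argument with the base point taken to be any other endpoint of $\ell$ shows that $\ccT^\ell$ is genuinely independent of both choices.

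For the second assertion, the displayed formula exhibits $\dim \ccT^\ell = 1 + \dim T + \dim \im(II_\xi(v, \cdot))$, so the dimension depends only on the pair $(\xi, [v])$ with $[v] \in \PP T_\xi X$. Since $G$ acts on $X$ and equivariantly on both $II$ and the Hilbert scheme $\mathcal L$ of lines on $X$, $\dim \ccT^\ell$ is a $G$-invariant constructible function on $\mathcal L$. On a generalized cominuscule $X = G/P$, the subvariety of $\PP T_\xi X$ parameterizing lines on $X$ through $\xi$ decomposes into $P_\xi$-homogeneous pieces (each itself a generalized cominuscule variety), whence each irreducible component of $\mathcal L$ is a single $G$-orbit, and $\dim \ccT^\ell$ is constant on it. The main obstacle is this homogeneity statement for components of $\mathcal L$; alternatively, one can combine lower semi-continuity of $\operatorname{rank}(II_\xi(v, \cdot))$ on an irreducible family with $G$-invariance on the dense open $G$-orbit of each component to deduce constancy.
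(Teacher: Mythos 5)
Your proof of the first assertion is essentially the paper's: both arguments fix a splitting at one endpoint, use that $II(\nu'^2)=0$ for the direction $\nu'$ of $\ell$ together with the prolongation property \eqref{equ_prolongation} to kill all contributions of $\BF_s$ for $s\ge 3$, and arrive at the same formula $\ccT^\ell = \hat T_{[\xi]}X + II(T\cdot\nu')$ (the paper's \eqref{equ_formula_for_ccTell}), from which independence of both endpoints follows by symmetry. The only cosmetic difference is that you differentiate the parameterization $\phi$ at $tv$ while the paper differentiates curves through $\eta$.

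The second assertion is where you correctly locate the crux but do not close it. The paper's route is exactly your primary one: each irreducible component of the variety of lines on $X$ is a single orbit of $\operatorname{Aut}(X)$; for $G$ simple this is quoted from \cite[Thm.~4.3]{LM0} and \cite{MR1443819}, and for Segre--Veronese products of cominuscule varieties it reduces to the factors with $d_i=1$. You flag this homogeneity as ``the main obstacle'' but neither prove nor cite it, so as written your primary route is incomplete. More importantly, your proposed fallback does not work: $\dim\ccT^\ell = \operatorname{rank}$ of the map $T_\xi\oplus T_\eta\oplus\CC^2\to V$ is \emph{lower} semicontinuous on an irreducible family, so it attains its maximum on a dense open set and is allowed to \emph{drop} on the closed complement of the dense $G$-orbit --- which is precisely the degeneration that semicontinuity fails to rule out. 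Constancy on the dense orbit plus lower semicontinuity therefore does not yield constancy on the whole component; you genuinely need the transitivity statement (or an upper-semicontinuity argument, which is not available here).
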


\begin{proof}
    Fix $\basept:=[\inV{\xi}] \in \ell$.   By Lemma~\ref{lem_can_pick_splitting} we may
    choose   a splitting \eqref{equ_splitting_for_gen_cominuscule} such that $[\inV{\eta}] \notin T\oplus N$.
     Thus $[\inV{\eta}]$ is in the image
            of the parameterization by Remark~\ref{rem_every_pt_is_in_infty_or_in_image}.
     Consider    a curve $\inV{y}(t) \in \hat X$ with $\inV{y}(0) = \inV{\eta}$.
    Note that $\prj{y}(0) \in T$ is  in   the tangent direction to $\ell$.
    Then in the splitting \eqref{equ_splitting_for_gen_cominuscule}:
    \begin{align*}
       \inV{y}'(0) & = \frac{\ud}{\ud t} (\baseptaff + \prj{y}(t) + II(\prj{y}(t)^2) + III(\prj{y}(t)^3) + \dotsb)|_{t=0}\\
             & = \prj{y}'(0) + 2 II(\prj{y}'(0) \prj{y}(0)) + 3 III(\prj{y}'(0) \prj{y}(0)^2) + \dotsb\\
             & \stackrel{(\star)}{=} \prj{y}'(0) + 2 II(\prj{y}'(0) \prj{y}(0)).
    \end{align*}
    Here $(\star)$ holds by the prolongation property \eqref{equ_prolongation}, because $II( \prj{y(0)}^2)  =0$.
    Thus letting $\nu'$ be any non-zero vector in $T_{\xi}\ell \subset T$ we have:
    \begin{equation}\label{equ_formula_for_ccTell}
      \ccT^{\ell} = { \hat T}_{[\inV{\xi}]} X + {\hat  T}_{[\inV{\eta}]} X =
       \set{\inV{\xi'} + II( \zeta' \nu' ) \mid \text{for } \inV{\xi'} \in  {\hat T}_{[\inV{\xi}]} X, \inV{\zeta'} \in  {T}_{[\inV{\xi}]} X}.
    \end{equation}
    This formula   is independent of $\inV{\eta}$, so we can vary $\inV{\eta}\in \ell$ freely.
    Exchanging the roles of $\inV{\xi}$, and $\inV{\eta}$, we can also vary $\inV{\xi}$.

    Thus, $\ccT^{\ell}$ is determined by the geometry of   $\ell\subset X$.
    But the group of automorphisms of $X$ acts transitively on  each  irreducible component   of the space parameterizing lines on $X$.
When $X=G/P$ with $G$ simple, this is
        \cite[Thm.~4.3]{LM0} and \cite{MR1443819}.
    (This is true for any minimally embedded homogeneous variety $G/P_I$, with $G$ simple, where $I$ indexes the
       deleted simple roots, as long as $I$ does not contain an \lq\lq exposed short root\rq\rq\ in the language of \cite{LM0}.)
When $X=Seg(v_{d_1}(G_1/P_1)\ctimes v_{d_n}(G_n/P_n))$ is generalized cominuscule (with each $G_i/P_i$ cominuscule), the set of lines
on $X$ is the disjoint union of the variety  of lines on each $G_i/P_i$ such that  $d_i=1$.
    Thus $\dim \ccT^{\ell}$ must be constant over these irreducible components.
\end{proof}

Lemma \ref{lem_tangent_at_line} allows an alternative interpretation of the points of type \ref{item_main_thm_normal_form_two_tangents}:

\begin{lemma}\label{lem_alternative_iv}
With the notation as in Theorem~\ref{thm_points_in_Sigma},
  let $\ptstypeiv$ denote  the set of points of type~\ref{item_main_thm_normal_form_two_tangents}.
   Then $[\inV{p}]\in \ptstypeiv$ if and only if
\begin{enumerate}
    \renewcommand{\theenumi}{(iv')}
    \item  \label{nitem_points_in_Sigma_case_closure_of_IIb}
           $\inV{p} = \inV{\xi'} + II( \zeta' \nu' )$ for some $\inV{\xi} \in \hat X$,
           $\inV{\xi'} \in  {\hat T}_{[\inV{\xi}]} X$,  $\inV{\zeta'}, \inV{\nu'} \in  {T}_{[\inV{\xi}]} X$
           with  $II( (\nu')^2  )=0$, i.e.,  $\nu'$ is tangent to a line on $X$ through $\xi$.
\end{enumerate}
\renewcommand{\theenumi}{(\arabic{enumi})}
Furthermore, $\ptstypeiv$ is a closed subset of $\PP V$.
\end{lemma}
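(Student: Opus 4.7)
The plan is to prove the equivalence (iv) $\Leftrightarrow$ (iv') using formula \eqref{equ_formula_for_ccTell} from Lemma~\ref{lem_tangent_at_line}, and then to deduce closedness of $\ptstypeiv$ by exhibiting it as the image of a projective morphism over the Fano scheme of lines on $X$.

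For (iv) $\Rightarrow$ (iv'), suppose $p = \xi' + \eta'$ with $[\xi] \ne [\eta]$ lying on a line $\ell \subset X$ and $\xi' \in \hat T_{[\xi]}X$, $\eta' \in \hat T_{[\eta]}X$. Fix a non-zero $\nu' \in T_{[\xi]}\ell$. Formula \eqref{equ_formula_for_ccTell} rewrites every element of $\ccT^\ell$ as $\xi_1' + II(\zeta'\nu')$ with $\xi_1' \in \hat T_{[\xi]}X$ and $\zeta' \in T_{[\xi]}X$, and since $\ell$ is parameterized in the generalized cominuscule splitting by $t \mapsto \baseptaff + t\nu'$, the second-order term vanishes: $II((\nu')^2) = 0$.

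Conversely, for (iv') $\Rightarrow$ (iv), given $p = \xi' + II(\zeta'\nu')$ with $II((\nu')^2) = 0$, I first produce a line through $[\xi]$ with tangent direction $\nu'$. Since $X$ is generalized cominuscule we have $F_s = \BF_s$ for all $s$, and iterating the prolongation property \eqref{equ_prolongation} through the factorizations $(\nu')^s = (\nu')^2 \cdot (\nu')^{s-2}$ gives $\BF_s((\nu')^s) = 0$ for every $s \ge 2$. Hence $\phi(t\nu') = \baseptaff + t\nu'$ is a linear curve in $\hat X$ whose projectivization is a line $\ell \subset X$ through $[\xi]$. Picking any $[\eta] \in \ell$ distinct from $[\xi]$ (assuming $\nu' \ne 0$), formula \eqref{equ_formula_for_ccTell} places $p$ inside $\ccT^\ell = \hat T_{[\xi]}X + \hat T_{[\eta]}X$, producing a type~(iv) decomposition $p = \xi_1' + \eta_1'$. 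The degenerate case $\nu' = 0$ collapses to $p = \xi' \in \hat T_{[\xi]}X \subset \ccT^\ell$ for any line $\ell$ through $[\xi]$.

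For closedness of $\ptstypeiv$, let $F$ denote the Fano scheme of lines on $X$. By the homogeneity arguments recalled in Lemma~\ref{lem_tangent_at_line}, $F$ is a finite disjoint union of projective homogeneous varieties, on each of which $\dim \ccT^\ell$ is constant. The assignment $\ell \mapsto \ccT^\ell$ therefore defines an algebraic sub-bundle $\mathcal{E} \subset F \times V$, and the natural evaluation map $\BP(\mathcal{E}) \to \BP V$ is a morphism of projective varieties whose image is exactly $\bigcup_{\ell \subset X} \BP(\ccT^\ell) = \ptstypeiv$; since the image of a projective variety under a morphism is closed, this completes the argument. The step I expect to require the most care is verifying that $\ell \mapsto \ccT^\ell$ is an honest algebraic sub-bundle and not merely a set-theoretic family of subspaces of constant rank; the cleanest way to handle this is to pull back to the incidence correspondence $\{([\xi], \ell) : [\xi] \in \ell\} \to F$, where both a basepoint on $\ell$ and a tangent direction can be chosen regularly, after which \eqref{equ_formula_for_ccTell} exhibits $\ccT^\ell$ as a regular sub-bundle that descends to $\mathcal{E}$.
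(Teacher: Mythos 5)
Your proposal is correct and follows essentially the same route as the paper: the equivalence of (iv) and (iv') is read off from formula \eqref{equ_formula_for_ccTell}, and closedness comes from realizing $\ptstypeiv$ as the image of a projective-space bundle over the variety of lines on $X$, using the local constancy of $\dim\ccT^{\ell}$ from Lemma~\ref{lem_tangent_at_line}. Your added details (the prolongation argument showing $II((\nu')^2)=0$ forces the line $t\mapsto\baseptaff+t\nu'$ to lie in $\hat X$, and the care about the family $\ell\mapsto\ccT^{\ell}$ being an honest sub-bundle) are sound elaborations of steps the paper leaves implicit.
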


\begin{proof}
   The alternative description \ref{nitem_points_in_Sigma_case_closure_of_IIb}
     follows from \eqref{equ_formula_for_ccTell}.

  To see that  $\ptstypeiv$ is a closed subset of $\PP V$,
     note $\ptstypeiv$ is the image of a projective space bundle over the variety parameterizing lines on $X$,
     whose fiber over $\ell \subset X$ is $\PP (\ccT^{\ell})$.
  Since $\dim \ccT^{\ell}$ is locally constant by Lemma~\ref{lem_tangent_at_line},
    this bundle is a projective variety,
    and thus $\ptstypeiv$ is an image of a projective variety, hence projective.
\end{proof}

In the following lemma, we provide an uniform interpretation of the points of types
  \ref{item_main_thm_normal_form_third_order_pt}--\ref{item_main_thm_normal_form_two_tangents}.

\begin{lemma}\label{lem_form_of_points_in_closure_of_II}
   $[\inV{p}]$ is of type  \ref{item_main_thm_normal_form_third_order_pt} or \ref{item_main_thm_normal_form_two_tangents},
   if and only if
   \begin{enumerate}
     \renewcommand{\theenumi}{(iii--iv)}
     \item \label{item_points_in_Sigma_case_closure_of_II}
           $\inV{p} = \inV{\xi'} + u$ for some $\inV{\xi} \in \hat X^0$,
            $\inV{\xi'} \in \hat{T}_{[\inV{\xi}]} X$,
            and  $u \in \overline{II} := \overline{\set{II(\prj{v}^2) : \prj{v} \in T}}$.
   \end{enumerate}
   \renewcommand{\theenumi}{(\arabic{enumi})}
   Moreover, for $u \in V$, the following conditions are equivalent:
   \begin{enumerate}
      \item \label{item_form_of_u_in_II_bar}
             $u \in \overline{II}$;
      \item \label{item_form_of_u_with_curve}
              There exist a curve $\prj{v}(t) \in T$ and an integer $m$,
              such that $II(\prj{v}(t)^2) = t^m u + t^{m+1} (\dotsc)$;
      \item \label{item_form_of_u_with_vectors}
              There exist an integer $m$ and vectors $\fromto{\prj{v_0}, \prj{v_1}}{\prj{v_m}} \in T$,
              such that
              \[
                 II\left(\sum_{i=0}^d \prj{v_i} \prj{v_{d-i}} \right) =
                            \begin{cases}
                                  0 & \text{if } d < m \\
                                  u & \text{if } d = m
                            \end{cases}
              \]
   \end{enumerate}
\end{lemma}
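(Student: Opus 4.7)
The plan is to prove the equivalence of the three conditions (1)--(3) first, since they are essentially algebraic, and then use that equivalence as a calculational device to deduce the type (iii)/(iv) characterization, where the cominuscule structure enters.

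For the equivalences: (2)$\Leftrightarrow$(3) is immediate from the power series expansion
\[
  II(\prj v(t)^2) = \sum_{d\ge 0} II\!\bigl(\textstyle\sum_{i=0}^d v_i v_{d-i}\bigr) t^d,
\]
so the vanishing of Taylor coefficients through order $m-1$ and the prescription of $u$ at order $m$ match between the two conditions, and truncating to $v_0,\dotsc,v_m$ affects neither. For (2)$\Rightarrow$(1), the Laurent curve $w(s) := \prj v(s^2)/s^m$ takes values in $T$ for $s\ne 0$ and satisfies $II(w(s)^2) = u + O(s^2)$, exhibiting $u$ as a limit in $II(T)$. For (1)$\Rightarrow$(2), $\overline{II}$ is the closure of the image of the polynomial map $\Phi: T \to V$, $\Phi(v) = II(v^2)$, so by the complex curve selection lemma there is an analytic arc $\tilde w(s) \in T$, with some pole order $k\ge 0$ at $s=0$, such that $\Phi(\tilde w(s))$ extends analytically with value $u$; then $\prj v(t) := t^k \tilde w(t)$ is regular and realizes (2) with $m = 2k$.

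For the forward direction ($\Rightarrow$) of the main statement, a type (iii) point comes from a curve $x(t) \in \hat X$; in a splitting \eqref{equ_splitting_for_gen_cominuscule} centered at $[\xi] := [x(0)]$ I would write $x(t) = \lambda(t)(\baseptaff + \phi(\prj x(t)))$ with $\lambda(0)=1$ and $\prj x(0)=0$, and observe that $\prj x(0)=0$ kills all higher Fubini contributions in the first two derivatives, giving
\[
  x'(0)+x''(0) = \bigl((\lambda'(0)+\lambda''(0))\baseptaff + (2\lambda'(0)+1)\prj x'(0) + \prj x''(0)\bigr) + 2\,II(\prj x'(0)^2),
\]
with the first summand in $\hat T_{[\xi]}X$ and the second in $II(T) \subseteq \overline{II}$. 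For a type (iv) point, Lemma~\ref{lem_alternative_iv} writes $p = \xi' + II(\zeta'\nu')$ with $II((\nu')^2)=0$, and $v_0:=\nu'$, $v_1:=\zeta'/2$ witnesses (3) with $m=1$, hence $u \in \overline{II}$.

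For the reverse direction ($\Leftarrow$), I would apply (3) to $u \in \overline{II}$ to obtain $m$ and $v_0,\dotsc,v_m$. The case $m=0$ produces a type (iii) point by taking the polynomial curve $x(t) := (1+\mu t)(\baseptaff + \phi(\alpha t + \beta t^2))$ with $\alpha := v_0/\sqrt 2$ and solving the resulting linear system for $\mu \in \CC$ and $\beta \in T$ so that $x'(0)+x''(0) = \xi'+u$. The case $m=1$ gives a type (iv) point via (iv$'$): by the cominuscule structure, $II(v_0^2)=0$ forces $v_0$ to be tangent at $\xi$ to a line on $X$, so $\nu':=v_0$, $\zeta':=2v_1$ applies. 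The main obstacle is the case $m\ge 2$: the extra vanishing conditions $II(v_0 v_1)=0$, $II(v_0 v_2)+\tfrac12 II(v_1^2)=0$, $\dotsc$ together with the prolongation property \eqref{equ_prolongation} (equivalent to the abelianness of the unipotent radical of the stabilizer) permit an iterative modification to a pair $(v_0',v_1')$ with $II((v_0')^2)=0$ and $2\,II(v_0',v_1')=u$, reducing to the $m=1$ case. The rigidity of the null cone of $II$ that enables this reduction is where the cominuscule hypothesis is essential; without it one could in principle produce elements of $\overline{II}$ that fall outside both $II(T)$ and the $(m=1)$-locus, and the characterization would fail.
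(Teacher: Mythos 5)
Your treatment of the equivalences (1)--(3), of the forward implication, and of the cases $m=0$ and $m=1$ of the reverse implication is essentially correct (and in places does more work than necessary: in the setting of Theorem~\ref{thm_points_in_Sigma}, type \ref{item_main_thm_normal_form_third_order_pt} is by definition $\xi'+II((\eta')^2)$, so the $m=0$ direction needs no curve construction or solving for $\mu,\beta$). The genuine gap is the case $m\ge 2$ of the reverse implication, which you correctly single out as ``the main obstacle'' but then dispose of by assertion. You claim that the vanishing conditions $II(v_0^2)=0$, $II(v_0v_1)=0$, $2II(v_0v_2)+II(v_1^2)=0,\dotsc$ together with the prolongation property ``permit an iterative modification'' to a pair $(v_0',v_1')$ with $II((v_0')^2)=0$ and $2II(v_0'v_1')=u$. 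No such modification is exhibited, and it is not a formal consequence of \eqref{equ_prolongation}: already for $m=2$ the statement you need is that $II(v_0^2)=0$ and $II(v_0v_1)=0$ force $II(v_1^2)\in II(v_0\cdot T)$, which is a nontrivial structural fact about the second fundamental form of a generalized cominuscule variety (verifiable by hand for Segres and Grassmannians, but requiring an argument in general), and the required identities compound for larger $m$. As written, the key step of the lemma is unproven.

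This is precisely where the paper takes a different, global route that you do not use. The paper observes that a point admitting a witness of order $m>1$ lies in the closure of the set $\ptstypeiv$ of points of type \ref{nitem_points_in_Sigma_case_closure_of_IIb}, and then concludes by Lemma~\ref{lem_alternative_iv} that $\ptstypeiv$ is already closed: it is the image of a projective bundle over the variety of lines on $X$, with fibers $\PP(\ccT^{\ell})$ of locally constant dimension by Lemma~\ref{lem_tangent_at_line}. The homogeneity input (transitivity on components of the space of lines) is thus packaged into a properness statement rather than into an algebraic identity for $II$. To repair your argument you would either need to actually prove the iterative reduction to $m=1$ (at the fixed base point $\basept$, which is a stronger claim than the paper makes), or replace the $m\ge2$ case by the closure argument via Lemmas~\ref{lem_tangent_at_line} and~\ref{lem_alternative_iv}.
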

Note that $\BP \overline{II}$ is the closure of the image of the rational map $ii: \BP T\dashrightarrow \BP N$
  given by $[\prj v]\mapsto [II(\prj v^2)]$.

\begin{proof}[Proof of Lemma~\ref{lem_form_of_points_in_closure_of_II}]
   The equivalence of \ref{item_form_of_u_in_II_bar}--\ref{item_form_of_u_with_vectors} is clear.
   In the notation of \ref{item_form_of_u_with_vectors},
     a point $\inV{p}$ is of type \ref{item_main_thm_normal_form_third_order_pt}
     if and only if it is of type \ref{item_points_in_Sigma_case_closure_of_II} with $m=0$,
     and it is of type \ref{nitem_points_in_Sigma_case_closure_of_IIb}
     if and only if it is of type \ref{item_points_in_Sigma_case_closure_of_II} with $m=1$.
   So suppose $\inV{p}$ is of type  \ref{item_points_in_Sigma_case_closure_of_II} with $m>1$.
   Then it is in the closure of $\ptstypeiv$, the set of points of type \ref{nitem_points_in_Sigma_case_closure_of_IIb}.
   But $\ptstypeiv$ is closed by Lemma~\ref{lem_alternative_iv},
     so $\inV{p}$ is of type \ref{item_main_thm_normal_form_two_tangents}.
\end{proof}

\begin{proof}[Proof of Theorem \ref{thm_points_in_Sigma}]
   Suppose $\inV{p} \in \sigma_3(X)$, so
     there exist $\inV{\xi}$ and  $\inV{y}(t):=y_1(t), \inV{z}(t):=y_2(t)$  as in Lemma~\ref{lem_one_curve_is_constant}.
   Write $\inV{\xi} = \baseptaff$,
      and by Lemma \ref{lem_can_pick_splitting} we may  choose the splitting \eqref{equ_splitting_for_gen_cominuscule} such that for small values of $t$,
  we have       $\inV{y}(t),\inV{z}(t)\not\in T\op N$.
   So
      $\inV{y}(t)=(\baseptaff ,\prj{y}(t),\nrml y(t))$
      by Remark~\ref{rem_every_pt_is_in_infty_or_in_image}
      and similarly for $\inV{z}(t)$.
   Consider  the   curves $\prj{y}(t),\prj{z}(t) \in T$.
   Exchanging the roles of $\inV{y}$ and $\inV{z}$ if necessary, pick maximal integers $k,l$,
     with $l \ge k\geq 0$ and such that:
   \begin{align*}
     \prj{y}(t)& = t^k \prj{v}(t) \text{ and}\\
     \prj{z}(t)& = t^k \lambda(t) \prj{v}(t) + t^l \prj{w}(t)
   \end{align*}
   for some holomorphic function $\lambda(t) \in \CC$ and curves $\prj{v}(t), \prj{w}(t) \in T$.
   From now on,   we write $\inV{y}$ for $\inV{y}(t)$, etc.
   We adopt the convention $l=\infty$ if $\prj w=0$.

   If $l=0$, then  $0, \prj{y}_0, \prj{z}_0$ are three distinct and non-collinear points in $T$.
   This implies that $\inV{p}$ is on an honest $3$-secant,
       and we are in case \ref{item_main_thm_normal_form_honest_secant}.
   So from now on suppose $l >0$.

   Our goal is to understand the leading term  (in $t$)   of
\be\label{equ_xyz_after_gauss}
      \baseptaff \wedge \inV{y} \wedge \inV{z} =\baseptaff \wedge (\inV{y}-\baseptaff)
 \wedge (\inV{z} - \baseptaff - \lambda(\inV{y}-\baseptaff)).
\ene
Expanding out terms we obtain:
\begin{align}
\inV{y}-\baseptaff &=t^k \prj{v} + t^{2k} II( \prj{v}^2) + t^{3k} III( \prj{v}^3) + \dotsb\nonumber\\
(\inV{z} - \baseptaff &- \lambda(\inV{y}-\baseptaff))=
     t^l \prj{w} + \sum_{s=2}^{f} \FF_s\Bigl(\prj{z}^s  - \lambda \prj{y}^s\Bigr) \nonumber\\
       & = t^l \prj{w} +  \sum_{s=2}^{f} \FF_s\biggl(\left(\lambda t^k \prj{v} + t^l \prj{w}\right)^s
 - \lambda t^{sk} \prj{v}^s\biggr)\nonumber\\
        &= t^l \prj{w}_0 +  \sum_{s=2}^{f} \FF_s\Bigl((\lambda^s - \lambda) t^{sk} \prj{v}^s
              + s   \lambda^{s-1}   t^{(s-1)k+l} \prj{v}^{s-1}\prj{w})\Bigr)
              + t^{l+1} (\dotsc)\nonumber\\
     &   = t^l \prj{w}_0 +  \sum_{s=2}^f  (\lambda^s - \lambda) t^{sk} \FF_s\bigl(\prj{v}^s\bigr)
              + s \lambda^{s-1}  t^{(s-1)k+l} \FF_s\bigl(\prj{v}^{s-1}\prj{w})\bigr)
              + t^{l+1} (\dotsc).
          \label{equ_expansion_of_normal_part}
\end{align}
  First consider the case $k \ge 1$, so that the three limit points coincide: $\baseptaff =\inV{y}_0 =\inV{z}_0$.
  In this case,  the terms in \eqref{equ_expansion_of_normal_part} with $t^{(s-1)k+l}$ are of order higher than $l$.
  By Lemma~\ref{lem_vanishing_of_II_gives_vanishing_of_F_s},  the higher fundamental forms $\FF_s$ with $s\ge 3$
    will always have higher degree leading term than $II$.
  Thus:
  \[
    \baseptaff \wedge \inV{y} \wedge \inV{z} = \baseptaff \wedge t^k \prj{v_0} \wedge
      \left(t^l\prj{w_0} + t^{2k}\lambda(\lambda-1) II(\prj{v}^2)\right) + \dotsb
    \text{terms of higher order}.
  \]
  We conclude that any point $p$ in the limiting space, which is spanned by $\baseptaff$, $\prj{v_0}$, and
    the leading term of $\left(t^l\prj{w_0} + t^{2k}\lambda(\lambda-1) II(\prj{v}^2)\right)$,  is of the form
    \ref{item_points_in_Sigma_case_closure_of_II}.

\smallskip

In the remainder of the argument assume $k=0$  and we still assume $l>0$.

 If   $\lambda_0 \ne 0,1$,
     the three  limit points $ 0,  \prj{y}_0, \prj{z}_0$ are distinct,
     but they  lie on a line in $T$.
Also suppose that $II(\prj{v_0}^2) \ne 0$.
This means  (e.g. by \eqref{taylore})   that the projective line from $\basept$ in the direction of $\prj{v_0}$ is not contained in $X$.
It follows that
    $\baseptaff, \inV{y_0}, \inV{z_0}$ are linearly independent,
     because any line trisecant to $X$ is entirely contained in $X$.
    This  leads to   case \ref{item_main_thm_normal_form_honest_secant}.

 Now say  $\lambda_0 = 0$ or $1$, and $II(\prj{v_0}^2) \ne 0$.
  If $\lambda_0= 0$, then   $ \baseptaff=\inV{z}(0)$.
  If $\lambda_0= 1$, then   $\inV{y}(0)= \inV{z}(0)$.
  Swapping the roles of $\inV{x}$ and $\inV{y}$ if necessary, we may assume $\lambda_0=0$
     and write $\lambda = t^m \lambda_m + t^{m+1} (\dotsc)$ with $m\ge 1$ and $\lambda_m \ne 0$.
  Note also $\prj{y} = \prj{v}$ in this case (because $k=0$).
  Then the leading term of \eqref{equ_expansion_of_normal_part} is the leading term of
     $t^l\prj{w}_0 + \sum_{s=2}^{f} (\lambda^s - \lambda) \FF_s\bigl(\prj{y_0}^s\bigr)$
     or it is of order at least $l+1$. Therefore:
  \begin{align*}
     \baseptaff \wedge \inV{y} \wedge \inV{z} &=
        \baseptaff \wedge \inV{y_0} \wedge
        \left(t^{l} \prj{w_0} + \sum_{s=2}^{f} (\lambda^s - \lambda) \FF_s\bigl(\prj{y_0}^s\bigr)\right)
         +   \text{ terms of higher order}\\
     &= \baseptaff \wedge \inV{y_0} \wedge
        \Bigg(t^{l} \prj{w_0} +
          \underbrace{\sum_{s=2}^{f} \lambda^s \FF_s\bigl(\prj{y_0}^s\bigr)}_{=t^{2m}\cdot(\dotsc)} -
          \lambda \underbrace{\sum_{s=2}^{f}  \FF_s\bigl(\prj{y_0}^s\bigr)}_{=\inV{y_0} - \baseptaff - \prj{y}_0}\Bigg)
           +   \text{ terms of higher order}\\
     &= \baseptaff \wedge \inV{y_0} \wedge
        \left(\lambda \prj{y_0} + t^{l} \prj{w_0} \right)
           +   \text{ terms of higher order}\\
     &= \baseptaff \wedge \inV{y_0} \wedge
        \left(\lambda_m t^m \prj{y_0} + t^{l} \prj{w_0}\right)    +   \text{ terms of higher order}.
  \end{align*}
  Note $\inV{y_0}$ is linearly independent from $T$, because $II(\prj{y_0}^2) \ne 0$.
 We cannot have $m = l$ and $\prj{w_0} = - \lambda_m  \prj{y_0}$, because  then the choice of $l$ would  not
be maximal.
  Thus we have non-zero terms of degrees $l$ or $m$,
     and the limiting space is spanned by $\baseptaff, \inV{y_0}$ and a tangent vector to $\basept$
     (which is a linear combination of $\prj{y_0}$ and $\prj{w_0}$).
  Therefore we are in   case \ref{item_main_thm_normal_form_point_plus_tangent}.

 Finally,  suppose   $II(\prj{v_0}^2) = 0$
   (so the line $\langle \basept, y(0)\rangle$ is contained in $X$).

Hence \eqref{equ_expansion_of_normal_part} becomes:
\[
        t^l \prj{w}_0 +  \sum_{s=2}^f  \bigl( (\lambda^s - \lambda) \FF_s\bigl(\prj{v}^s\bigr)
              + s \lambda^{s-1}  t^{l} \FF_s\bigl(\prj{v}^{s-1}\prj{w})\bigr)
              + t^{l+1} (\dotsc).
\]
We claim that the summands with $\FF_s$ for $s \ge 3$ are irrelevant to the leading term.
First note for $s\ge 3$ the fundamental form  $ \FF_s (\prj{v}^{s-1}\prj{w})$ vanishes at $t=0$
  by the prolongation property \eqref{equ_prolongation}.
So $ t^{l} \FF_s\bigl(\prj{v}^{s-1}\prj{w})\bigr)$ has order of vanishing at least $l+1$, unless $s=2$.
Next we treat
\begin{align*}
   \sum_{s=2}^f  (\lambda^s - \lambda) \FF_s\bigl(\prj{v}^s\bigr) &
     = (\lambda^2 - \lambda) \sum_{s=2}^f (1 + \lambda + \dotsb + \lambda^{s-2}) \FF_s\bigl(\prj{v}^s\bigr)
\end{align*}
By Lemma~\ref{lem_vanishing_of_II_gives_vanishing_of_F_s},
   for $s\ge 3$ the leading term of $\FF_s(\prj{v}^s)$
    is of higher order than that of $II(\prj{v}^s)$.
Thus the leading term of \eqref{equ_expansion_of_normal_part} can only come from the leading term of
\begin{equation} \label{equ_leading_term_in_last_case}
        t^l \prj{w}_0 +  (\lambda^2 - \lambda) II\bigl(\prj{v}^2\bigr)
              + 2 \lambda  t^{l} II \bigl(\prj{v}\prj{w}_0\bigr).
\end{equation}
Suppose $\mu$ is a holomorphic function in one variable, and $m$ is the maximal integer such that
   $\lambda -1 =  t^m \mu^2$ for sufficiently small values of $t$. Note that $\mu$ has invertible values near $t=0$.
If $m \ge l$, then only  $t^l \prj{w}_0  +  2 \lambda  t^{l} II (\prj{v}\prj{w}_0)$
  contributes to the leading term of \eqref{equ_expansion_of_normal_part},
  and $p$ is of type \ref{item_points_in_Sigma_case_closure_of_II}.
Suppose $m <l$, and rewrite \eqref{equ_leading_term_in_last_case}, up to terms of order $> l$:
\[
        t^l \prj{w}_0 + \lambda t^{m}II \bigl((\mu \prj{v} + \frac{t^{l-m}}{\mu} \prj{w}_0)^2\bigr)
\]
Thus there exists $u \in \overline{II}$
    (either $u=0$ or $u$ is the leading coefficient of
    $II \bigl((\mu \prj{v} + \frac{t^{l-m}}{\mu} \prj{w}_0)^2\bigr)$ up to scale,
      compare with Lemma~\ref{lem_form_of_points_in_closure_of_II}\ref{item_form_of_u_with_curve}),
    such that the limiting space $\lim_{t\to 0} \langle \baseptaff, \inV{y}(t), \inV{z}(t)\rangle$
    is spanned by either $\baseptaff, \inV{y_0}, u$ or $\baseptaff, \inV{y_0}, \prj{w_0} + u$.
   Since $\inV{y_0} \in \hat\basept \oplus T$,
      in either case we have $p= \xi' + u$ for some $\xi'\in \hat\basept \oplus T$,
      a linear combination of $\baseptaff$, $\inV{y_0}$  and $\prj{w_0}$, and also after possible rescaling of $u$.
   That is, $p$ is a point of type \ref{item_points_in_Sigma_case_closure_of_II}.

\smallskip

  It remains to prove  that any point $p$ of the form
    \ref{item_main_thm_normal_form_honest_secant}, \ref{item_main_thm_normal_form_point_plus_tangent},
    or \ref{item_points_in_Sigma_case_closure_of_II}
    is in $\sigma_3(X)$.
 Case \ref{item_main_thm_normal_form_honest_secant} is clear,
   case \ref{item_main_thm_normal_form_point_plus_tangent}
   follows as $\s_3(X)=J(X,\s_2(X))\supset J(X,\t(X))$
 and  points on tangent lines are handled by Proposition  \ref{prop_case_r=2}.

  Finally, for   case \ref{item_points_in_Sigma_case_closure_of_II},
   take   $\xi = \baseptaff$, and $\xi' = \baseptaff+ \prj{w_0}$ with $\prj{w_0} \in T$.
  For $u \in \overline{II}$,
  let $\prj{v}$ and $m$ be as in Lemma~\ref{lem_form_of_points_in_closure_of_II}\ref{item_form_of_u_with_curve}.
  Set:
  \begin{align*}
    \inV{x}(t) &:= \baseptaff,\\
    \inV{y}(t) &:= \baseptaff + t \prj{v} + t^2 II(\prj{v}^2) + \dotsb, \text{ and}\\
    \inV{z}(t) &:= \baseptaff + 2t \prj{v} + 4t^2 II(\prj{v}^2) + \dotsb + 2 t^{m+2} \prj{w_0} + \dotsb
  \end{align*}
    i.e. $\prj{y}(t) = t \prj{v}$ and $\prj{z}(t) = 2t \prj{v} + 2 t^{m+2} \prj{w_0}$.
  We calculate:
  \[
     \inV{x}(t) \wedge \inV{y}(t) \wedge\inV{z}(t) = \baseptaff \wedge t \prj{v} \wedge t^{m+2}(2 \prj{w_0} + 2 u)
          + \dotsb \text{terms of higher order}.
  \]
  Here  $\xi' + u = \baseptaff+\prj{w_0} + u$ is in the limiting space.
\end{proof}

\section{Examples}\label{s3exsect}

In the next sections we treat   the case of Segre product with at least $3$ factors in detail.
Here we briefly review some other cases.

\subsection{Known results}
We record the following known results:

\begin{example} Let $X\subset \BP V$ be one of
  $v_2(\pp n)$ (symmetric matrices of rank one), $G(2,n)$
(skew-symmetric matrices of rank two), $Seg(\BP A\times \BP B)$
(matrices of rank one), or the Cayley plane $\BO \pp 2$. Then
any point on $\s_r(X)$ for any $r$ is on an honest secant
$\pp{r-1}$.
\end{example}

\begin{example}\cite{LTrank} Let $X=v_d(\pp n)$ for $d>2$.
Then any point in $\s_3(X)$ is of  the form
\renewcommand{\theenumi}{(\roman{enumi})}
\renewcommand{\labelenumi}{\theenumi}
\begin{enumerate}
    \item
           $\inV{p} = \inV{\xi} + \inV{\eta} +\inV{\zeta}$
           for some $\inV{\xi}, \inV{\eta}, \inV{\zeta} \in \hat X$ ($\inV{p}$ is on an honest $3$-secant plane), or
    \item
           $\inV{p} = \inV{\xi'} +\inV{\eta}$ for some $\inV{\xi}, \inV{\eta} \in \hat X$
           and $\inV{\xi'} \in  {T}_{[\inV{\xi}]} X$, or
    \item
           $\inV{p} = \inV{\xi'} + II(\eta',\eta')$ for some $\inV{\xi} \in \hat X$,
           $\inV{\xi'}, \inV{\eta'} \in  {T}_{[\inV{\xi}]} X$,
   \end{enumerate}
Normal forms for  $\s_3(v_d(\BP V))\backslash
\s_2(v_d(\BP V))$ of these types are respectively $x^d+y^d+z^d$, $x^{d-1}y+z^d$ and
$x^{d-1}y+ x^{d-2}z^2$, where $x,y,z\in V$.
Thus the  points  of type \ref{item_main_thm_normal_form_two_tangents} do   not occur in this case.
\end{example}
\renewcommand{\theenumi}{(\arabic{enumi})}
\renewcommand{\labelenumi}{\theenumi}

The generalized cominuscule varieties with $\s_2(X)=\BP V$ are
$Seg(\pp 1 \times \pp n)$, $Seg(\pp 1\times \pp 1\times \pp 1)$, quadric hypersurfaces $Q$,
the Veronese varieties
$v_2(\pp 1)$, $v_3(\pp 1)$,
the Grassmannians  $G(2,5)$ and $G(3,6)$, the spinor varieties $\BS_5$ and $\BS_6$,
the Lagrangian Grassmannian $G_{Lag}(3,6)$, $Seg(\pp 1\times Q)$, and the Freudenthal variety
$E_7/P_7$.

\subsection{Grassmannians in Pl\"ucker embedding}

Let $X:=G(k,n) \subset \PP(\Wedge{k} \CC^n)$, and suppose $3\leq k\leq n-k$ and $n-k>3$.
The tangent space  at   $E\in G(k,n)$
    can be identified with the space of  $k \times (n-k)$-matrices $\Wedge{k-1}E \otimes F \simeq E^* \otimes F$,
    where $F = \CC^n/E$.
The local parametrization in this case comes from a choice of splitting $\CC^n \simeq E \oplus F$
    and the determined splitting:
    \begin{alignat*}{7}
       \Wedge{k} (E\oplus F) &=&\ \Wedge{k} E \ & \oplus & \ \Wedge{k-1} E &\otimes F \  &&\oplus & \ \Wedge{k-2} E & \otimes \Wedge{2} F
                &&\oplus \dotsb \oplus &   & \phantom{\otimes } \ \Wedge{k} F \\
                             &\simeq& \baseptline \ &\oplus &E^*& \otimes F  &&\oplus &\Wedge{2} E^* &\otimes \Wedge{2} F
                &&\oplus \dotsb \oplus & \ \Wedge{k} E^* & \otimes \Wedge{k} F.
    \end{alignat*}
The parametrization has the following form:
\[
  T \simeq E^* \otimes F \ni M \stackrel{\varphi}{\mapsto}
  [\underbrace{1,}_{\in \baseptline} \underbrace{M,}_{\in T} \underbrace{\Wedge{2} M,}_{=II(M^2)\in N_2} \dotsc,
  \underbrace{\Wedge{k} M}_{\in N_k}],
\]
where $\FF_s(M^s) = \Wedge{s} M \in \Wedge{s} E^* \otimes \Wedge{s} F$,  expressed in linear coordinates,  is the collection of all $s\times s$ minors of $M$.

In the normal forms of Theorem~\ref{thm_points_in_Sigma} we can take the first point $\xi = \baseptaff$,
  for the second we have $k$ choices given the rank of $M$.
Let $\epsilon_i$ for $i \in \setfromto{1}{k}$
  denote the matrix of rank $i$ with the block form
$
 \begin{pmatrix}
   \Id_i & 0\\
     0 & 0
 \end{pmatrix}
$.
The normal forms are:
  \begin{itemize}
    \item[\ref{item_main_thm_normal_form_honest_secant}]
           $\inV{p} = \baseptaff + \varphi(\epsilon_i) + \varphi(M)$ for some $i$, $M$,
    \item[\ref{item_main_thm_normal_form_point_plus_tangent}]
           $\inV{p} = \baseptaff + M+ \varphi(\epsilon_i) $ or  $\inV{p} = M + \varphi(\epsilon_i)$ for some $i$, $M$,
    \item[\ref{item_main_thm_normal_form_third_order_pt}]
           $\inV{p} = \baseptaff +  M + \Wedge{2}\epsilon_i$  or  $\inV{p} = M + \Wedge{2}\epsilon_i$ for some $i$, $M$,
    \item[\ref{nitem_points_in_Sigma_case_closure_of_IIb}]
           $\inV{p} = \baseptaff +  M + \Wedge{2}\epsilon_{i+1} -\Wedge{2}\epsilon_{i}$
                                        or  $\inV{p} = M + \Wedge{2}\epsilon_{i+1} -\Wedge{2}\epsilon_{i}$
                                        for some $i \ne k$, $M$.
  \end{itemize}
In \ref{nitem_points_in_Sigma_case_closure_of_IIb}, $\nu = \epsilon_{i+1} -\epsilon_{i}$ is a rank $1$ matrix,
    so $II(\nu^2)=0$, and $\Wedge{2}\epsilon_{i+1} -\Wedge{2}\epsilon_{i} =  \frac{1}{2} II (u, \epsilon_i)$.
In all normal forms, we can pick $M$ to be in some normal form. For example, if $i=k=n-k$,
   then $M$ may be
   (at least)
   assumed to be in Jordan normal form.

\subsection{Lagrangian Grassmannians}
Let $X$ be the Lagrangian Grassmannian $G_{Lag}(k,2k)=C_k/P_k \subset \PP (V_{\omega_k})$
 with $k>3$, where $V_{\omega} = \Wedge{k} \CC^{2k} /\Wedge{k-2} \CC^{2k}$ is the minimal homogeneous embedding.
In this case the local parametrization is identical, but with $T \simeq S^2 \CC^k$
    and $M$ a  symmetric $k \times k$ matrix,
    see \cite[\S 5]{boralevi_jabu_secants_to_LG}.
The normal forms are also identical.

\subsection{Spinor varieties}
Let $X$ be  the    spinor variety $\BS_{k}=D_k/P_k$ for $k\ge 7$
   in its minimal homogeneous embedding $\PP(\Wedge{even} \CC^{k})$.
In this case  $T\simeq \Wedge{2} \CC^k$ and $M$ is a skew-symmetric $k\times k$ matrix,
and the parameterization  is   similar to the previous cases:
\[
  M \stackrel{\varphi}{\mapsto}
  [\underbrace{1,}_{\in \baseptline} \underbrace{M,}_{\in T}  \ \underbrace{\Pf_4 M,}_{=II(M^2)\in N_2} \
   \underbrace{\Pf_6 M,}_{=III(M^3)\in N_3}\dotsc],
\]
where $\Pf_{2s} M\in \Wedge{2s} \CC^k$,  expressed in linear coordinates,   is the collection of all $2s\times 2s$  sub-Pfaffians   of $M$.

Let $\epsilon^{skew}_i$ for $i \in \setfromto{1}{\lfloor \frac{1}{2} k \rfloor}$
  denote the matrix of rank $2i$ with the block form
$
 \begin{pmatrix}
   0      & \Id_i & 0 \\
   -\Id_i & 0     & 0 \\
   0      & 0     & 0
 \end{pmatrix}
$.
The normal forms are:
  \begin{itemize}
    \item[\ref{item_main_thm_normal_form_honest_secant}]
           $\inV{p} = \baseptaff + \varphi(\epsilon^{skew}_i) + \varphi(M)$ for some $i$, $M$,
    \item[\ref{item_main_thm_normal_form_point_plus_tangent}]
           $\inV{p} = \baseptaff + M+ \varphi(\epsilon^{skew}_i) $ or  $\inV{p} = M + \varphi(\epsilon^{skew}_i)$ for some $i$, $M$,
    \item[\ref{item_main_thm_normal_form_third_order_pt}]
           $\inV{p} = \baseptaff +  M + \Pf_4\epsilon^{skew}_i$  or  $\inV{p} = M + \Pf_4\epsilon^{skew}_i$ for some $i$, $M$,
    \item[\ref{nitem_points_in_Sigma_case_closure_of_IIb}]
           $\inV{p} = \baseptaff +  M + \Pf_4\epsilon^{skew}_{i+1} -\Pf_4\epsilon^{skew}_{i}$
                                        or  $\inV{p} = M + \Pf_4\epsilon^{skew}_{i+1} -\Pf_4\epsilon^{skew}_{i}$
                                        for some $i \ne \lfloor \frac{1}{2} k \rfloor$, $M$.
  \end{itemize}

\section{The Segre product  $Seg(\BP A_1\ctimes \BP A_n))$}\label{smallsecsegsect}

Recall that for any smooth variety $X$, if $x\in \s_2(X)$, then either
$x\in X$, $x\in \s_2^0(X)$ or $x$ lies on an embedded tangent line to $X$, see Proposition~\ref{prop_case_r=2}.

\subsection{Proof of Proposition \ref{lastthm}}

  All the assertions except for the rank
of $x$ in \eqref{tprimevect}  are immediate.
The rank of $x$ is at most  $|J|$ because there
are $|J|$ terms in the summation.

Assume without loss of generality  $|J|=n$ and  work by induction.  The case $n=2$ is clear.
Now assume we have established the result up to $n-1$, and consider
   $x(A_1^*)$. It is spanned by
$$
a_1^2\otc a_1^n,
\sum_{j=2}^n a_1^2\otc a_{1}^{j-1}\ot a_2^{j}\ot a_1^{j+1}\otc a_1^n.
$$
By induction, the second vector has rank $n-1$,  so the only way $x(A_1^*)$ could be spanned
by $n-1$ rank one elements would be if there were an expression of the second   vector as a sum of $n-1$ decomposable tensors
where one of terms is a multiple of  $a_1^2\otc a_1^n$. Say there were  such an expression,  where $a_1^2\otc a_1^n$
appeared with coefficient $\l$, then the tensor
$\sum_{j=2}^n a_1^2\otc a_{1}^{j-1}\ot a_2^{j}\ot a_1^{j+1}\otc a_1^n - \l a_1^2\otc a_1^n$
would have rank $n-2$, but setting $\tilde a^2_2=a^2_2-\l a^2_1$
and $\tilde a^j_2=a^j_2$ for $j \in \setfromto{3}{n}$,
this would imply that
\[
  \sum_{j=2}^n a_1^2\otc a_{1}^{j-1}\ot \tilde a_2^{j}\ot a_1^{j+1}\otc a_1^n
\]
 had rank $n-2$,
a contradiction.

\begin{remark} The case $n=3$ was previously established by Grigoriev, Ja'Ja' and Teichert.
\end{remark}

\subsection{Parameterization in the Segre case}\label{sec_parameterization_Segre}
Suppose $X = Seg(\BP A_1\ctimes \BP A_n)$.
Let $\baseptaff=a^1_1\otc a^n_1$, and let $A_j'=a^1_1\otc a^{j-1}_1\ot (A_j/a^j_1)\ot  a^{j+1}_1\otc a^n_1
\simeq A_j/a^j_1
$.
Then $T = A_1'\op\cdots \op A_n'$ and $X$ is parametrized by
\[
  (\fromto{a'_1}{a'_n}) \mapsto [\underbrace{1,}_{\in \baseptline} \underbrace{\fromto{a'_1}{a'_n}}_{\in T}, \ \underbrace{\fromto{a'_1\otimes a'_2}{a'_{n-1}\otimes a'_n}}_{=II((\fromto{a'_1}{a'_n})^2)\in N_2}, \dotsc,
  \underbrace{a'_1\otimes a'_2 \otimes\dotsm\otimes a'_n}_{\in N_n}].
\]
Thus $II((\fromto{a'_1}{a'_n})\cdot(\fromto{b'_1}{b'_n}))
     = \frac{1}{2}(\fromto{a'_1 \otimes b'_2 + b'_1 \otimes a'_2}{a'_{n-1} \otimes b'_n + b'_{n-1} \otimes a'_{n}})$.

In this case the base locus of $II$ is $\BP A_1'\sqcup \cdots \sqcup \BP A_n'\subset \BP (A_1'\op\cdots \op A_n') \simeq \PP T$.
If $II(\prj v_0^2)=0$, then $\prj v_0\in A_i'$ for some $i$ and if
further $II(\prj v_0\prj v_1)=0$ then
$\prj v_1\in A_i'$ for the same $i$.

In particular, if a line $\ell \subset X$ contains $\basept$ and is tangent to $\prj v_0$,
  then by \eqref{equ_formula_for_ccTell} we have:
\begin{equation}\label{equ_dim_ccT_ell}
  \dim \ccT^{\ell} = 2 \dim X + 1 - \dim \ker II(\prj{v}_0 \ \cdot ) = 2 \dim X +2 - \dim A_i.
\end{equation}

Now we prove Theorem~\ref{s3nformthm}.
The normal forms follow from the discussion in the previous
sections.

Now suppose $\dim A_i \ge 3$.
To see that the general points of each type do not belong to the other types,
  note that for any type and for any $i$,
  in the normal forms \eqref{nf1}--\eqref{nf4}
  either $a_1^i, a_2^i, a_3^i$  are linearly independent,
  or the point is contained in a subspace variety, i.e., a closed subvariety consisting of tensors in some
  $A_1 \otimes \dotsb \otimes A_{i-1} \otimes \CC^2 \otimes A_{i+1} \otimes \dotsb \otimes A_n$.
Thus the general points of each type form a single orbit
  (or $n$ orbits for type \ref{item_main_thm_normal_form_two_tangents}) of the action of $GL(A_1) \times \dotsm \times GL(A_n)$.
Therefore the only possible way that they could  overlap, is if one of the orbits were equal to the other.
But the orbits are distinct  by the dimension count below, which we present in two different forms.

\subsection{First proof of dimensions in Theorem~\ref{s3nformthm}}
%
%
%

We   compute the Lie algebras of the stabilizers of each type of point.
Without loss of generality (for computing codimension), assume $\tdim A_j=3$.
Write $\Gamma =(x_1\hd x_n)$ where $x_\a=(x^i_{j,\a})$, $1\leq i,j\leq 3$.
We calculate the $\Gamma $ such that $\Gamma .p_{(*)}=0$ in each case $*=i,ii,iii,iv$  and denote this algebra
by $\fg_{p_{(*)}}$. In each case one has a system of $3^n=\tdim(A_1\otc A_n)$ linear
equations, many of which are zero or redundant.

$$
\fg_{p_{(i)}}=
\left\{ \Times_{\a=1\hd n} \begin{pmatrix} x^1_{1,\a} & 0 & 0 \\ 0 & x^2_{2,\a} & 0 \\ 0 & 0 & x^3_{3,\a}\end{pmatrix}
\mid \sum_{\a}x^i_{i,\a}=0,\ i=1,2,3 \right\}.
$$

Note $\tdim \fg_{p_{(i)}}=3n-3$.

$$
\fg_{p_{(ii)}}=
\left\{\Times_{\a=1\hd n} \begin{pmatrix} x^1_{1,\a} &  x^1_{2,\a} & 0 \\ 0 & -\sum_{\b\neq \a} x^1_{1,\b} & 0 \\ 0 & 0 & x^3_{3,\a}\end{pmatrix}
\mid \sum_{\a}x^3_{3,\a}=0, \   \sum_{\a}x^1_{2,\a}=0\right\}.
$$

Note $\tdim \fg_{p_{(ii)}}=3n-2$.

$$
\fg_{p_{(iii)}}=
\left\{ \Times_{\a=1\hd n} \begin{pmatrix} x^1_{1,\a} &  x^1_{2,\a} & x^1_{3,\a}
 \\ 0 & -\sum_{\b\neq \a} x^1_{1,\b}  & -\sum_{\b\neq \a}x^1_{2,\b} \\ 0 & 0 & -\sum_{\b\neq \a}x^1_{1,\b}\end{pmatrix}
\mid  \   \sum_{\a}x^1_{3,\a}=0\right\}.
$$

Note $\tdim \fg_{p_{(iii)}}=3n-1$.

\begin{align*}
&\fg_{p_{(iv)}}=\\
&
\left\{
\begin{pmatrix} x^1_{1,1} &  x^1_{2,1} & x^1_{3,1}
 \\ x^2_{1,1} & x^2_{2,1} & -\sum_{\rho}x^1_{2,\rho}
\\ 0 & 0 & -\sum_{\rho}x^1_{1,\rho}\end{pmatrix}
,
 \Times_{\rho=2\hd n}\begin{pmatrix} x^1_{1,\rho} &  x^1_{2,\rho} & x^1_{3,\rho}
 \\ 0 & -\sum_{\s\neq \rho} x^1_{1,\s}-x^2_{2,1} & -x^1_{2,1} \\ 0 & -x^2_{1,1} & -\sum_{\b\neq \rho}x^1_{1,\b}\end{pmatrix}
\mid  \   \sum_{\a}x^1_{3,\a}=0\right\}.
\end{align*}
Here the index ranges are $1\leq \a,\b\leq n$, $2\leq \rho,\s\leq n$.
Note $\tdim \fg_{p_{(iv)}}=3n+1$.

\subsection{Second Proof of dimensions in Theorem~\ref{s3nformthm}}

Throughout this section $X= Seg(\PP A_1 \times \dotsm \times \PP A_n)$.

We show the assertion about the codimensions of types (ii),(iii),(iv).
Type \ref{item_main_thm_normal_form_point_plus_tangent} is immediate as its closure is $J(X,\t(X))$ which is easily seen to
have the expected dimension via Terracini's lemma.

We will use the following lemma:
\begin{lemma}\label{lem_scheme_deg_3}
   Suppose $n \ge 2$ and $\dim A_i \ge 3$ for all $i \in \setfromto{1}{n}$.
   Let $R$ be a degree $3$, zero dimensional subscheme of $X= Seg(\PP A_1 \times \dotsm \times \PP A_n)$.
   Suppose moreover $R$ is in general position, that is, it is not contained in any
   $Seg(\PP A_1 \times \dotsm \times \PP^1 \times \dotsm \times \PP A_n)$.
   Let $\langle R \rangle \simeq \PP^2 \subset \PP(A_1 \otimes \dotsb \otimes A_n)$
       denote the smallest linear space containing $R$.
    Then $X \cap \langle R \rangle = R$.
\end{lemma}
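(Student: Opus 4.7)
The plan is case analysis on the structure of the length-$3$ scheme $R$: (a) three distinct reduced points; (b) a reduced point together with a length-$2$ scheme supported at a second point; or (c) a length-$3$ scheme supported at a single point (either curvilinear or non-curvilinear). In each case I would write down an explicit basis $T_1, T_2, T_3$ of $\hat{R} \subset V$ consisting of rank-one tensors $a^1 \otimes \dotsb \otimes a^n$ and their tangent/jet lifts; e.g., in case (a), $T_i = a_i^1 \otimes \dotsb \otimes a_i^n$, and in case (b) the third basis vector is a tangent lift $\sum_j a_2^1 \otimes \dotsb \otimes b^j \otimes \dotsb \otimes a_2^n$ at the double point.

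For any $T = c_1 T_1 + c_2 T_2 + c_3 T_3 \in \hat{R}$ representing a point $[T] \in X \cap \langle R \rangle$, the main step applies a flattening $\mathrm{fl}_j \colon V \to A_j \otimes A_{\hat{j}}$, where $A_{\hat{j}} := \bigotimes_{i \ne j} A_i$, and views $\mathrm{fl}_j(T)$ as a linear map $A_j^* \to A_{\hat{j}}$, which must have $1$-dimensional image since $T$ has tensor rank one. The general position hypothesis translates, for each $j$, into linear independence in $A_j$ of the three $j$-th factor vectors (or their jet analogs in the non-reduced cases) extracted from $R$. Evaluating on the corresponding dual basis reads off the $c_i$'s independently, and the rank-one constraint then forces all but one of them---together with any surviving jet coefficients---to vanish. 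This calculation is close in spirit to the rank computation in the proof of Proposition~\ref{lastthm}, and pins $[T]$ onto the support of $R$.

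To promote the set-theoretic statement to scheme-theoretic equality, I would separately rule out positive-dimensional components of $X \cap \langle R \rangle$ and match local lengths at each support point. Since $X$ is cut out by quadrics, any $1$-dimensional component of $X \cap \langle R \rangle$ is a plane curve of degree at most two inside $\langle R \rangle \simeq \PP^2$. Every line on the Segre variety $X$ has the form of a $\PP^1 \subset \PP A_i$ with the remaining factors constant; if such a line sat inside $\langle R \rangle$, the projections $\pi_j(R)$ for $j \ne i$ would all collapse to single points, violating the general position hypothesis. Conics on $X$ (either irreducible $(1,1)$-curves on a sub-Segre $Seg(\PP^1 \times \PP^1)$, or unions of two lines on $X$) are excluded by the same reasoning. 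Matching local lengths at each $p \in R_{\mathrm{red}}$ then follows from the analogous flattening/rank-one analysis applied to first-order deformations of $T$ inside $\hat{R}$, showing that any tangent vector of $X \cap \langle R \rangle$ at $p$ already lies in the tangent scheme prescribed by $R$.

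The main obstacle is case (c) with a non-curvilinear length-$3$ scheme supported at a single point, where two independent jet directions must be tracked simultaneously and general position must be interpreted appropriately for each; the linear algebra in the flattening step becomes noticeably more intricate, but the same strategy of combining rank-one constraints with general-position-based independence carries through.
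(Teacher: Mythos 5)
Your overall shape---the case analysis into three reduced points, a double point plus a reduced point, and the two kinds of triple points $\Spec \CC[x]/x^3$ and $\Spec\CC[x,y]/\langle x^2,xy,y^2\rangle$, followed by explicit normal forms and rank-one conditions on flattenings---matches what the paper does for the set-theoretic statement: after reducing to $n=2$ and $\dim A_i=3$, the paper writes $\langle R\rangle$ as an explicit $3\times 3$ matrix of linear forms in $s,t,u$ and computes the scheme cut out by its $2\times 2$ minors (your ``rank-one constraint on the flattening'' is exactly the vanishing of these minors). The gaps are in your scheme-theoretic half. First, your exclusion of lines is a non-sequitur: if a line $\ell\subset X$ lay inside $\langle R\rangle$, it would not follow that $R\subset\ell$---indeed $R$ spans $\langle R\rangle\simeq\PP^2$, so it cannot lie on any line---hence the projections $\pi_j(R)$ do not collapse and general position of $R$ is not contradicted by the mere existence of such an $\ell$. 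A priori a subscheme of $\PP^2$ cut out by quadrics can perfectly well be a line union a point off that line, so finiteness of $X\cap\langle R\rangle$ really has to be read off from the equations; in the explicit matrices this is visible because the ideal of $2\times 2$ minors contains a power of a linear form (e.g.\ $s^2$ in the triple-point cases), which is what kills any positive-dimensional component.

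Second, matching tangent spaces cannot pin down local lengths in the non-reduced cases: a curvilinear scheme of length $2$ and one of length $3$ supported at the same point with the same tangent line have identical first-order data, so knowing $T_p(X\cap\langle R\rangle)=T_pR$ together with the automatic inclusion $R\subseteq X\cap\langle R\rangle$ bounds only the embedding dimension of the local ring, not its length. To finish you would need second-order information at the support points, or an a priori bound $\operatorname{length}(X\cap\langle R\rangle)\le 3$ (for instance by showing the quadrics of $X$ restrict onto the full $3$-dimensional space of conics through $R$ and that $R$ is cut out by those conics). The paper sidesteps both issues simultaneously: for each of the four normal forms the ideal generated by the $2\times 2$ minors is computed outright and seen to define exactly $R$ as a scheme, and the case $n\ge 3$ is then deduced by writing $X=\bigcap_i \PP A_i\times \PP B_i$ and invoking the two-factor computation on a flattening. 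I recommend you replace the line/conic exclusion and the tangent-space matching by that direct ideal computation.
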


\begin{proof}
   Any such $R$ is isomorphic either to $3$ distinct reduced points,
     or a double point and a reduced point,
     or  one of the two kinds of  triple points:  $\Spec \CC[x]/x^3$, or $\Spec \CC[x,y]/\langle x^2, xy, y^2 \rangle$.

   If $n=2$, without loss of generality, we may suppose  $\dim A_1 = \dim A_2 = 3$.
   We can write down explicitly $\langle R \rangle \subset \PP (A_1 \otimes A_2)$ for each of the schemes as, respectively:
   \[
      \begin{pmatrix}
        s &   &   \\
          & t &   \\
          &   & u
      \end{pmatrix},
      \begin{pmatrix}
        t & s &   \\
        s &   &   \\
          &   & u
      \end{pmatrix},
      \begin{pmatrix}
        u & t & s \\
        t & s &   \\
        s &   &
      \end{pmatrix},
      \begin{pmatrix}
        t & s & u \\
        s &   &   \\
        u &   &
      \end{pmatrix}
   \]
     The  claim  may be verified explicitly for each case, by calculating the scheme defined
     by $2 \times 2$ minors of each of the matrices.

   If $n \ge 3$, let $B_i = A_1 \otimes\dotsb \otimes A_{i-1} \otimes A_{i+1} \otimes\dotsb \otimes A_n$.
   Then $X  = \bigcap_{i =1}^n\PP A_i \times \PP B_i$, and
     the  claim  easily  follows   from the $n=2$ statement.
\end{proof}

\begin{lemma}\label{lem_line}
   Suppose $n \ge 2$ and $\dim A_i \ge 3$ for all $i \in \setfromto{1}{n}$.
   Let $X= Seg(\PP A_1 \times \dotsm \times \PP A_n)$
   and let $\ell \subset X$ be a line  spanned by $x,y\in X$.
   Let $v \in \hat{T}_x X + \hat{T}_y X$ be general and consider $\PP^2$ spanned by $\ell$ and $[v]$.
          Then $\PP^2 \cap X = \ell$.
\end{lemma}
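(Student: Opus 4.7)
The approach is to use, for each factor $j \ge 2$, an explicit linear projection of $A_1\otimes\dotsm\otimes A_n$ that vanishes on $\hat\ell$ but has rank-two image on a generic element of $\hat T_xX + \hat T_yX$. Since any point of $\PP^2 \cap X$ is a rank-one tensor (up to scale), this will force the $v$-coefficient to vanish in any expression of such a point as $\alpha x+\beta y+\gamma v$.

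After permuting factors, one may assume $\ell$ varies in the first factor, so that $x=a_1^1\otimes a_1^2\otimes\dotsm\otimes a_1^n$ and $y=a_2^1\otimes a_1^2\otimes\dotsm\otimes a_1^n$ with $a_1^1,a_2^1$ linearly independent. Applying the Segre parametrization of \S\ref{sec_parameterization_Segre} at each of $x$ and $y$ yields
\[
  \hat T_xX+\hat T_yX
  \;=\; A_1\otimes a_1^2\otimes\dotsm\otimes a_1^n
  \;+\; \sum_{j=2}^n \langle a_1^1,a_2^1\rangle\otimes a_1^2\otimes\dotsm\otimes A_j\otimes\dotsm\otimes a_1^n,
\]
so a general $v$ decomposes as
\[
 v \;=\; v_1\otimes a_1^2\otimes\dotsm\otimes a_1^n
     + \sum_{j=2}^n\bigl(a_1^1\otimes a_1^2\otimes\dotsm\otimes b_1^j\otimes\dotsm\otimes a_1^n
     + a_2^1\otimes a_1^2\otimes\dotsm\otimes b_2^j\otimes\dotsm\otimes a_1^n\bigr).
\]
Genericity of $v$ will mean that, for each $j\ge 2$, the images $\bar b_1^j,\bar b_2^j$ in $A_j/\langle a_1^j\rangle$ are linearly independent; this is an open nonempty condition since $\dim A_j\ge 3$.

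For each $j\ge 2$ I would then consider the projection $\pi_j\colon A_1\otimes\dotsm\otimes A_n\to A_1\otimes\dotsm\otimes (A_j/\langle a_1^j\rangle)\otimes\dotsm\otimes A_n$ obtained by quotienting the $j$-th factor by $\langle a_1^j\rangle$. One immediately has $\pi_j(x)=\pi_j(y)=0$, and a direct computation gives
\[
  \pi_j(v) = a_1^1\otimes a_1^2\otimes\dotsm\otimes \bar b_1^j\otimes\dotsm\otimes a_1^n + a_2^1\otimes a_1^2\otimes\dotsm\otimes \bar b_2^j\otimes\dotsm\otimes a_1^n,
\]
a rank-two tensor under the genericity assumption. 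For any $[z]\in\PP^2\cap X$, write $z=\alpha x+\beta y+\gamma v$; since $z$ is a rank-one tensor, $\pi_j(z)$ has rank at most one, whereas $\pi_j(z)=\gamma\pi_j(v)$ has rank zero or two. Therefore $\gamma=0$ and $z\in\ell$, while the reverse inclusion is immediate. The step requiring the most care is checking that the rank-two property of $\pi_j(v)$ holds on a Zariski-open subset of $\hat T_xX+\hat T_yX$ simultaneously for all $j\ge 2$; but this reduces to an intersection of finitely many open nonempty conditions, each guaranteed by $\dim A_j\ge 3$.
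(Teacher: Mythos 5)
Your proof is correct, but it takes a genuinely different route from the paper's. The paper normalizes $v$ to the explicit form \eqref{nf4}, flattens everything through $A_1\otimes(A_2\otimes\dotsm\otimes A_n)$, and observes that the $2\times 2$ minors of the resulting $3\times 3$ slice matrix $\left(\begin{smallmatrix} t & & s\\ u & s & \\ s & & \end{smallmatrix}\right)$ contain $s^2$, which forces the $v$-coefficient to vanish on the rank-one locus. You instead work in a factor $A_j$, $j\ge 2$, in which the line is \emph{constant}: the contraction $\pi_j$ by the quotient $A_j\to A_j/\langle a_1^j\rangle$ annihilates $\hat\ell$ outright and sends a general $v$ to a tensor whose $A_1$-flattening has rank two, whereas any decomposable tensor contracts to something of rank at most one. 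The two arguments are in a sense dual — the paper detects the obstruction in the flattening along the factor where $\ell$ moves, you detect it in a quotient of a factor where $\ell$ is fixed — and both hinge on $\dim A_j\ge 3$ to make room for the rank-two residue (respectively, for the $s^2$ minor). Your version has the advantage of being coordinate-free and of not presupposing the normal form \eqref{nf4}; the paper's is a shorter explicit determinantal computation. One small remark: a single $j$ with $\bar b_1^j,\bar b_2^j$ independent already suffices, so you do not need the simultaneous genericity over all $j\ge 2$, though arranging it costs nothing. Also note that the independence of $\bar b_1^j,\bar b_2^j$ is well defined despite the non-uniqueness of your decomposition of $v$, since the ambiguity lives in $\hat\ell$ and dies in the quotient $A_j/\langle a_1^j\rangle$; it would be worth saying this explicitly.
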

\begin{proof}
   Let $x = a^1_{1}\otc a^{n}_{1}$, $y =
   a^1_{2}
   \ot a^2_{1}\otc a^{n}_{1}$,
     and $v$ be as in \eqref{nf4}.
   Let $B:=  A_2 \otimes\dotsb \otimes A_n$ and:
   \begin{align*}
     b_1 &:=a^2_{1}\otc a^{n}_{1}, \\
     b_2 &:=\sum_{ i=2 }^n a^2_{1}\otc a^{i-1}_{1}\ot a^i_2 \ot a^{i+1}_1 \otc a^n_{1},\\
     b_3 &:=\sum_{ i=2 }^n a^2_{1}\otc a^{i-1}_{1}\ot a^i_3 \ot a^{i+1}_1 \otc a^n_{1}.
   \end{align*}
   Then $x = a^1_1 \otimes b_1$, $y =
        a^1_2 \otimes b_1$ and
        $v =  a^1_1 \otimes b_3 + a^1_2 \otimes b_2 + a^1_3 \otimes b_1 $.
   Consider a linear combination $ sv + tx + uy$.
   The intersection $\PP^2 \cap X$ is contained in the zero locus of the $2 \times 2$ minors of the following matrix:
   \[
   \begin{pmatrix}
      t &   & s \\
      u & s  &  \\
      s &   &
    \end{pmatrix},
   \]
   which can be  identified with the line $s=0$, that is the line spanned by $x$ and  $y$.
\end{proof}

\smallskip

Let $\ptstypeiii$ be the closure of the set of points of type \ref{item_main_thm_normal_form_third_order_pt}.
Let $[p] \in  \ptstypeiii$ be a general point.
We claim such $p$  uniquely determines   $[x]$ such that $p = x+ x' + x''$.
  Suppose without loss of generality $\dim A_1 =3$.
Write $p=p_{(iii)}$ of \eqref{nf3},
and consider the underlying map $p_{(iii)}:
   {A_1}^* \to
    A_{2} \otimes\dotsb \otimes A_n$:
\begin{align*}
  p_{(iii)}({a^1_1}^*) & =  \sum_{2 \le i<j}^n a^2_{1}\otc a^{i-1}_{1}\ot a^i_2 \ot a^{i+1}_1
                    \otc a^{j-1}_{1}\ot a^j_2 \ot a^{j+1}_1 \otc a^n_{1} \\
               & + \sum_{i=2}^n a^2_{1}\otc a^{i-1}_{1}\ot a^i_3 \ot a^{i+1}_1 \otc a^n_{1},\\
  p_{(iii)}({a^1_2}^*) & = \sum_{j=2}^n a^1_{1}\otc a^{j-1}_{1}\ot a^j_2 \ot a^{j+1}_1 \otc a^n_{1},\\
  p_{(iii)}({a^1_3}^*) & = a^2_{1}\otc a^n_{1}.
\end{align*}
The projectivization of the image is a $\PP^2$ containing
  a degree $3$ scheme $R \subset Seg(\PP A_{2} \times\dotsb \times \PP A_n)$
  in general position,
  which is isomorphic to the triple point $\Spec \CC[x]/x^3$ point supported at $[p({a^1_3}^*)]$.
By Lemma~\ref{lem_scheme_deg_3},
  $R$ is determined by $\langle R \rangle =  \PP( p({A_1}^*))$,
  so it is  independent of the choice of normal form.
Therefore $\langle {a^1_2}^*, {a^1_3}^* \rangle$, which is the linear span of the unique degree $2$ subscheme of $R$,
   is determined by $p$, and so is $a^1_1$ (up to scale).
Similarly, $a^i_1$ are determined by $p$ up to scale.

Thus we have a rational dominant map
  $\psi: \ptstypeiii \dashrightarrow X$,
  $\psi(p) := [a^1_{1}\otc a^n_{1}]$.
A general fiber over $\basept \in X$ is contained in the second osculating space
   $\PP(\baseptline \oplus T \oplus N_2)$,
   and its closure is equal to the closure of points of the form $\baseptaff + \prj{\xi'} + II(\prj{v}^2)$.
Thus $\dim \ptstypeiii = 3 \sum (\dim A_i-1)$.

\smallskip

Finally consider $\ptstypeiv$, the set of points of type \ref{item_main_thm_normal_form_two_tangents}, which is
closed by Lemma \ref{lem_alternative_iv}.
Let $[p] \in  \ptstypeiv$ be a general point of any of the irreducible components.
We claim   $p$  uniquely determines the line $\PP \langle x,y  \rangle$
   such that $p = x+ x' + y + y'$.
Suppose without loss of generality $\dim A_i =3$ for all $i$.
Possibly permuting the factors, write $p=p_{(iv)}$ of  \eqref{nf4}.
First consider the underlying map $p_{(iv)}:{A_1}^*: \to  A_{2} \otimes\dotsb \otimes A_n$:
\begin{align*}
  p_{(iv)}({a^1_1}^*) & = \sum_{ i=2 }^n a^2_{1}\otc a^{i-1}_{1}\ot a^i_3 \ot a^{i+1}_1 \otc a^n_{1},\\
p_{(iv)}({a^1_2}^*) & = \sum_{ i=2 }^{n} a^2_{1}\otc a^{i-1}_{1}\ot a^i_2 \ot a^{i+1}_1 \otc a^n_{1},\\
  p_{(iv)}({a^1_3}^*) & = a^2_{1}\otc a^n_{1}.
\end{align*}
The projectivization of the image is a $\PP^2$ containing
  a degree $3$ scheme $R \subset Seg(\PP A_{2} \times\dotsb \times \PP A_n)$
  in general position,
  which is isomorphic to the triple point $\Spec \CC[x,y]/\langle x^2, xy, y^2 \rangle$ point supported at $[p_{(iv)}({a^1_3}^*)]$.
By Lemma~\ref{lem_scheme_deg_3},
  $[p_{(iv)}({a^1_3}^*)]$ is the unique reduced point in $\PP( p_{(iv)}({A_1}^*)) \cap Seg(\PP A_{2} \times\dotsb \times \PP A_n)$,
  so independent of the choice of normal form.
Therefore $\langle a^1_1, {a^1_2} \rangle \subset \PP A_1$ is determined by $p_{(iv)}$.

Now consider  $p_{(iv)}:{A_n}^*: \to  A_{1} \otimes\dotsb \otimes A_{n-1}$:
\begin{align*}
  p_{(iv)}({a^n_1}^*) & = \sum_{ i=1 }^{n-1} a^1_{1}\otc a^{i-1}_{1}\ot a^i_3 \ot a^{i+1}_1 \otc a^{n-1}_{1}\\
               & + \sum_{ i=2 }^{n-1} a^1_{2}\ot a^2_{1}\otc a^{i-1}_{1}\ot a^i_2 \ot a^{i+1}_1 \otc a^{n-1}_{1}.\\
 p_{(iv)}({a^n_2}^*) & = a^1_{2}\ot a^2_{1}\otc a^{n-1}_{1},\\
 p_{(iv)}({a^n_3}^*) & = a^1_{1}\otc a^{n-1}_{1}.
\end{align*}
By Lemma~\ref{lem_line} the projective line  $\PP \langle p_{(iv)}({a^n_2}^*),p_{(iv)}({a^n_3}^*) \rangle$ is determined by $p_{(iv)}$.
Thus $a^n_1$ (and similarly $a^i_1$ for $i \in \setfromto{2}{n}$) is determined (up to scale) by $p_{(iv)}$.
Therefore, the line $\PP(\langle a^1_1,
 a^1_2 \rangle \otimes a^2_1 \otimes \dotsb \otimes a^n_1) \subset X$
  is uniquely determined by $p_{(iv)}$.

The lines on $X$ are parametrized by $n$ irreducible varieties:
\[
  L_i:= \PP A_1 \times\dotsb \times  \PP A_{i-1} \times G(2, A_i) \times \PP A_{i+1} \times \dotsb \times \PP A_n.
\]
By the argument above we have a rational dominant map
  $\chi: \ptstypeiv \dashrightarrow L_1\sqcup \dots\sqcup L_n$.
A general fiber over $\ell \in L_i$ is $\PP \ccT^{\ell}$ in the notation of Lemma~\ref{lem_tangent_at_line},
    the linear span of projective tangent spaces to $X$ at points of $\ell$.
By \eqref{equ_dim_ccT_ell}   $\dim \ccT^{\ell} = 2 \dim X + 2 - \dim A_i$,
   and the dimension of each irreducible component of $\ptstypeiv$ is equal to $3 \sum (\dim A_i-1) -2$.

\section{Orbits of tensors in $A\otimes B \otimes C$ of border rank at most $3$}\label{sec_ranks_and_orbits}
Let $A\simeq \CC^{\aaa}$, $B\simeq \CC^{\bbb}$, $C\simeq \CC^{\ccc}$.
Let
\begin{multline*}
    Sub_{\aaa',\bbb',\ccc'}= Sub_{\aaa',\bbb',\ccc'}(A\ot B\ot C)=\\
\{T\in A\ot B\ot C\mid \exists \BC^{\aaa '}\subset A, \ \BC^{\bbb '}\subset B,\ \BC^{\ccc '}\subset C, \rm{\ such \ that \ }
T\in  \BC^{\aaa '}\ot  \BC^{\bbb '}\ot  \BC^{\ccc '}\}
\end{multline*}
 This {\it subspace variety} admits a desingularization as follows. Let $\mathcal E\ra G(\aaa',A)\times G(\bbb',B)\times G(\ccc',C)$
be $\mathcal E=\cS_{A}  \boxtimes \cS_{B}\boxtimes  \cS_{C}$, where $\cS_{A}\ra G(\aaa',A)$ is the tautological rank $\aaa'$ subspace bundle and similarly
for $B,C$. Then $\BP \mathcal E \ra Sub_{\aaa',\bbb',\ccc'}(A\ot B\ot C)$ is a desingularization and using it one can see that
$Sub_{\aaa',\bbb',\ccc'}(A\ot B\ot C)_{sing}=
Sub_{\aaa'-1,\bbb',\ccc'}  \cup Sub_{\aaa',\bbb'-1,\ccc'} \cup Sub_{\aaa',\bbb',\ccc'-1}
$,    whenever $\aaa' < \bbb \ccc$, and similarly for permuted statements.
In \cite[\S 6]{BLtensor}, normal forms for tensors in $Sub_{233}\cup Sub_{323}\cup Sub_{332}$ are given. There are
$33$ such.

We present the list of remaining orbits in $\sigma_3(Seg(\PP A \times \PP B \times \PP C))$
  under the action of $GL(A) \times GL(B) \times GL(C)$.

Each orbit is uniquely determined  by its closure, which is an algebraic variety listed in the second column of the table.
The orbit itself is an open dense subset of this variety.
The dimension of the algebraic variety is in the third column.
The fourth column is the normal form of the underlying tensor,
   the distinct variables are assumed to be linearly independent.
The normal form is also given as a slice.
The border rank and rank are given in the next columns.

\begin{table}[htb]
$$
\begin{array}{|r|c|c|ll|c|c|}
\hline
\hline
\#&\text{orbit closure}& \dim &\text{normal form} & \text{slice}  &  \ur &  \mathbf{R}  \\
\hline
\hline
34& \ptstypeiv_{A} & 3\aaa + 3\bbb+3\ccc - 11 & \begin{matrix}
           a_1\ot (b_1\ot c_2 +  b_2\ot c_1) +a_2\ot b_1\ot c_1\\
         + a_3\ot (b_3\ot c_1 + b_1\ot c_3)
        \end{matrix}&
\left(\begin{smallmatrix} t & s& u\\  s &  & \\ u &   & \end{smallmatrix} \right)
& 3&5\\
\hline
35& \ptstypeiv_{B} & 3\aaa + 3\bbb+3\ccc -11  & \begin{matrix}
          a_1\ot (b_1\ot c_2 +  b_2\ot c_1 + b_3 \ot c_3)\\
         +a_2\ot b_1\ot c_1 + a_3\ot b_3\ot c_1
        \end{matrix}&
\left(\begin{smallmatrix} t & s&\\  s &  & \\ u&   & s\end{smallmatrix} \right)
& 3&5\\
\hline
36& \ptstypeiv_{C} & 3\aaa + 3\bbb+3\ccc -11  & \begin{matrix}
          a_1\ot (b_1\ot c_2 +  b_2\ot c_1 + b_3 \ot c_3)\\
         +a_2\ot b_1\ot c_1 + a_3\ot b_1\ot c_3
        \end{matrix}&
\left(\begin{smallmatrix} t & s& u\\  s &  & \\ &   & s\end{smallmatrix} \right)
& 3&5\\
\hline
\hline
37& \ptstypeiii & 3\aaa + 3\bbb+3\ccc -9  & \begin{matrix}
    a_1\ot (b_1\ot c_3+  b_2\ot c_2 + b_3\ot c_1) \\
  + a_2\ot (b_1\ot c_2+ b_2 c_1) +   a_3\ot b_1\ot c_1
        \end{matrix}&
\left(\begin{smallmatrix} u & t&s\\   t&s  & \\ s &   & \end{smallmatrix} \right)
& 3&5 \\
\hline
\hline
38& \ptstypeii & 3\aaa + 3\bbb+3\ccc -8  & \begin{matrix}
           a_1\ot (b_1\ot c_2+  b_2\ot c_1)\\
         + a_2\ot b_1\ot c_1+ a_3\ot b_3\ot c_3
        \end{matrix}&
\left(\begin{smallmatrix} t & s&\\  s &  & \\ &   & u\end{smallmatrix} \right)
& 3&4 \\
\hline
\hline
39& \sigma_3(X) & 3\aaa + 3\bbb+3\ccc -7 & \begin{matrix}
            a_1 \ot b_1\ot c_1+a_2 \ot b_2\ot c_2
          + a_3\ot b_3\ot c_3
        \end{matrix}
& \left(\begin{smallmatrix} s & & \\   & t & \\  & & u\end{smallmatrix} \right)
& 3&3\\
\hline
\hline
\end{array}
$$
\caption{Orbits of border rank $3$ in $A\otimes B \otimes C$
         that are not contained in a $Sub_{233}$, $Sub_{323}$, or $Sub_{332}$.
         Orbits $34$--$36$ are identical up to permutations of $A$, $B$, $C$.}
\label{table_orbits_general}
\end{table}

$\ptstypeiv_A$, $\ptstypeiv_B$, $\ptstypeiv_C$
   denote the three components of $\ptstypeiv$,  the set points of type \ref{item_main_thm_normal_form_two_tangents}
   in Theorem~\ref{s3nformthm}.
$\ptstypeiii$ denotes the the closure of the set points of type \ref{item_main_thm_normal_form_third_order_pt},
   while $\ptstypeii$ denotes the the closure of the set points of type \ref{item_main_thm_normal_form_point_plus_tangent}.

The ranks of cases $34$--$37$ in Table~\ref{table_orbits_general} are calculated in \S\ref{sec_calculating_ranks}.
The rank of case $39$ is obvious, while the rank of case $38$ is at most $4$, due to the normal form expression.
If it were $3$, then a general point of type \ref{item_main_thm_normal_form_point_plus_tangent},
  would be expressible as a point of type \ref{item_main_thm_normal_form_honest_secant}, a contradiction with
  Theorem~\ref{s3nformthm}.

\subsection{Proof of Theorem  \ref{lastcor}}\label{sec_calculating_ranks}

The {\it rank} of a linear subspace $U\subset \BC^k\ot \BC^l$ is the smallest $r$ such that $U$ is contained in a linear
space of dimension $r$ spanned by rank one elements. The rank of a tensor $T\in A\ot B\ot C$ equals the
rank of the linear subspace $T(A^*)\subset B\ot C$, see, e.g., \cite[Thm. 3.1.1.1]{Ltensorbook}.

\begin{proposition}  \label{lastprop}
The  ranks of the spaces parametrized by
$
\begin{pmatrix} u&t&s \\ t&s&0 \\ s & 0 & 0\end{pmatrix}$,
 and by
$
\begin{pmatrix}
  t & s &u\\
  s & 0 & 0\\
  u & 0 & 0
\end{pmatrix}
$
are both $5$.
\end{proposition}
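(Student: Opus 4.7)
The plan is to show both ranks equal $5$, via matching upper and lower bounds of $5$. The key observation for the upper bound is that both matrices are symmetric and have zero $(2,3)$-entry (hence also $(3,2)$-entry), so both slice spaces lie inside the $5$-dimensional subspace $\Sigma \subset S^2 \mathbb{C}^3$ cut out by the vanishing of this entry. The first step is to verify that $\Sigma$ is spanned by the five rank-one symmetric matrices
\[
   e_1 e_1^{T},\ e_2 e_2^{T},\ e_3 e_3^{T},\ (e_1+e_2)(e_1+e_2)^{T},\ (e_1+e_3)(e_1+e_3)^{T},
\]
which are clearly linearly independent and lie in $\Sigma$. This immediately yields rank $\le 5$ for both spaces and, upon expansion, produces explicit $5$-term rank-one decompositions of the two corresponding tensors.

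For the lower bound I would argue by contradiction. A preliminary observation is that each of the two $3$-dimensional slice spaces $U$ contains exactly one projective rank-one line (namely $[M_u]$ for the first and $[M_t]$ for the second, as is immediate from the vanishing of $2\times 2$ minors of the general element of the pencil), so $U$ cannot itself be spanned by three rank-one matrices, giving rank $\ge 4$. To rule out rank equal to $4$, suppose $U \subset W'$ with $W'$ a $4$-dimensional subspace spanned by rank-one matrices $R_i = u_i v_i^{T}$. If $\dim \mathrm{span}\{u_i\} \le 2$ or $\dim \mathrm{span}\{v_i\} \le 2$, then $W'$ lies in $\mathbb{C}^2 \otimes \mathbb{C}^3$ or $\mathbb{C}^3 \otimes \mathbb{C}^2$, so every matrix in $W'$ has rank at most $2$; this contradicts the first space (which contains the rank-$3$ matrix $M_s$), and also the second space once one notes that $U$ is supported on (row $1$) $\cup$ (column $1$), so lies in the rigid $5$-dimensional subspace $W := e_1\otimes C + B\otimes e_1$, where a hyperplane of $W$ containing $U$ can contain rank-one matrices only of the form $e_1 v^{T}$ or $u e_1^{T}$, and these span at most a $3$-dimensional subspace.

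In the remaining case, the $u_i$ and $v_i$ each span all of $\mathbb{C}^3$ (with a single linear relation among the four $u_i$, and similarly for $v_i$). Passing to the quotient $(B\otimes C)/W \cong \mathbb{C}^2 \otimes \mathbb{C}^2$, the four $R_i$ project to rank-one elements lying on a common line through the origin (because $\dim W' - \dim (W'\cap W) = 1$), which forces the classes of all $u_i$ in $B/\mathbb{C} e_1$ to be proportional to a single nonzero vector $\bar{u} \in \mathbb{C}^2$, and similarly for the $v_i$. Writing $M_s$ and $M_u$ (resp.\ $M_s$ and $M_t$) as linear combinations of the $R_i$ and projecting, the requirement that these two combinations land in the pencil $U$ forces $\bar{u}$ and $\bar{v}$ to have two incompatible prescribed directions simultaneously, yielding the contradiction. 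The main obstacle is organizing this final direction-matching argument cleanly in both cases; a slicker uniform alternative is to apply the Koszul-flattening (Landsberg--Ottaviani) lower bound --- for $T \in \mathbb{C}^3 \otimes \mathbb{C}^3 \otimes \mathbb{C}^3$ of rank $r$, the associated $9 \times 9$ flattening $T_A^{\wedge 1}$ has rank at most $2r$, so verifying by a direct computation that this flattening is invertible forces $r \ge 5$ and closes the proof without case analysis.
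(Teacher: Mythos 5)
Your upper bound is correct and is a genuinely nicer route than the paper's: both slice spaces consist of symmetric matrices with vanishing $(2,3)$-entry, and that $5$-dimensional space is indeed spanned by the five rank-one symmetric matrices you list, so both upper bounds come at once and with explicit decompositions (the paper instead quotes an external rank-$4$ decomposition of the sub-pencil $\left(\begin{smallmatrix}0&t&s\\ t&s&0\\ s&0&0\end{smallmatrix}\right)$ and adds one term). Your rank $\geq 4$ step, via the observation that each pencil contains exactly one rank-one point up to scale, is also fine.

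The gap is in excluding rank $4$, which is where essentially all of the difficulty lies (the paper spends its whole proof there, with a column-reduction argument for the first pencil and an ideal-membership computation for the second). Two problems. First, your direct sketch does not go through as stated for the first pencil: the quotient argument hinges on $\dim W' - \dim(W'\cap W)=1$, which you justify by $U\subset W$; but the first pencil is \emph{not} contained in $W=e_1\otimes C+B\otimes e_1$, since its coefficient of $s$ is the rank-three antidiagonal matrix with a nonzero $(2,2)$-entry. There the image of $W'$ in $(B\otimes C)/W$ can be $2$-dimensional, the four projected $R_i$ need not be collinear, and the "direction-matching" contradiction you invoke is exactly the part you admit is not organized. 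Second, and more seriously, the fallback you propose cannot work: the Koszul flattening $T_A^{\wedge 1}\colon A\otimes B^*\to \Wedge{2}A\otimes C$ is linear in $T$, so the bound $\operatorname{rank}(T_A^{\wedge 1})\leq 2r$ is automatically a bound on \emph{border} rank, and both tensors here have border rank $3$ (they lie on $\s_3$ of $Seg(\pp 2\times\pp2\times\pp2)$). Hence the $9\times 9$ flattening has rank at most $6$, is never invertible, and the computation you propose would certify only $r\geq 3$. No determinantal or equation-based test can separate rank from border rank for these tensors, which is precisely why the paper resorts to a direct analysis of hypothetical spanning rank-one matrices; you need to complete your case analysis (or adopt an argument like the paper's) rather than fall back on the flattening.
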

\begin{proof}
We first show the rank is at most $5$: in the second case, it is immediate. In the first case the rank of
$
\begin{pmatrix}
  0&t&s\\
  t&s&0\\
  s&0&0
\end{pmatrix}$
is $4$ (see\cite[\S 6]{BLtensor}), and the rank of
$$
 \begin{pmatrix} u &0&0\\ 0&0&0\\  0&0&0\end{pmatrix}
$$
is one.

To see the ranks are at least five,
  were it four in the first case,  we would be able to find
  a $3 \times 3$ matrix $T=
  \begin{pmatrix}
     f_1g_1& f_1g_2 & f_1g_3\\
     f_2g_1& f_2g_2 & f_2g_3\\
     f_3g_1& f_3g_2 & f_3g_3
  \end{pmatrix}$
  of rank 1,
  such that  the $4$-plane spanned by:
\[
T_1:=\begin{pmatrix}
0&0&1\\
0&1&0\\
1&0&0
\end{pmatrix},
T_2:=\begin{pmatrix}
0&1&0\\
1&0&0\\
0&0&0
\end{pmatrix},
T_3:=\begin{pmatrix}
1&0&0\\
0&0&0\\
0&0&0
\end{pmatrix},
T
\]
is spanned by matrices of rank $1$.
In particular,
  $T_1$ would be in the span of $T_2, T_3, T$ and another matrix of rank $1$.
Thus we would be able to find constants
  $\b,\gamma,f_1,f_2,f_3,g_1,g_2,g_3$,
  such that the rank of
$$
\begin{pmatrix}
\gamma& \b & 1\\
\b&1&0\\
1&0&0
\end{pmatrix}
+
\begin{pmatrix}
f_1g_1& f_1g_2 & f_1g_3\\
f_2g_1& f_2g_2 & f_2g_3\\
f_3g_1& f_3g_2 & f_3g_3
\end{pmatrix}
$$
is  one.
There are two cases: if $g_3\neq 0$, then we can subtract
$ \frac{g_1}{g_3}$ times the third column from the first, and
$ \frac{g_2}{g_3}$ times the third column from the second  to obtain
$$
\begin{pmatrix}
* & * & 1+f_1g_3\\
* & 1 & f_2g_3\\
1 & 0 & f_2g_3
\end{pmatrix}
$$
which has rank at least two.
If $g_3=0$ the matrix already visibly has rank at least two.
Thus it is impossible to find such constants $\b, \gamma ,f_i,g_i$
and the rank in question is necessarily at least $5$.

The second case is more delicate.
Write all $2 \times 2$ minors of
\[
\begin{pmatrix}
t & s &u\\
s & 0 & 0\\
u & 0 & 0
\end{pmatrix} + x
\begin{pmatrix}                                                                                                                                             f_1g_1& f_1g_2 & f_1g_3\\                                                                                                                                   f_2g_1& f_2g_2 & f_2g_3\\                                                                                                                                   f_3g_1& f_3g_2 & f_3g_3                                                                                                                                     \end{pmatrix}
\]
and consider $f_i$ and $g_j$ as parameters of degree $0$, and remaining variables $\a_1 , \a_2, \a_3, x$ of degree $1$.
We claim $(s f_3 - u f_2)^2$ and  $(s g_3 - u g_2)^2$ are in the ideal $\ccI$ generated by minors.
This can be verified by patient calculation, or using a computer algebra system, such as Magma \cite{magma}.
Thus $f_2=f_3=g_2=g_3 =0$, for otherwise we have a degree $1$ equation in the radical ideal $\sqrt{I}$,
  and then the rank $1$ matrices do not span the four dimensional linear space.
But in such a case ${u}^2$ and ${s}^2$  are among the minors, giving $u$ and $s$ as linear equations in $\sqrt{I}$,
  a contradiction.
\end{proof}

\subsection{Singularities}\label{s3smooth}
In this subsection we prove Theorems~\ref{txsmooth} and  \ref{s3sing}.
The strategy is uniform to most cases:
  using the desingularization $\BP \cE \to Sub_{i,j,k}$  as in the beginning paragraph of \S\ref{sec_ranks_and_orbits},
  which is birational away from the locus
  $Sub_{i-1,j,k} \cup Sub_{i,j-1,k} \cup Sub_{i,j,k-1}$,
  we reduce statements to properties of secant varieties of low dimensional Segre products.

\begin{proof}[Proof of Theorem~\ref{txsmooth}]
First note that $\s_2(Seg(\BP A\times \BP B\times \BP C))=Sub_{2,2,2}$. In
particular, any point of $\s_2(Seg(\pp 1\times \pp 1\times \pp 1))=\pp 7$ is a smooth point.
Now just observe that $[a_1\ot b_1\ot c_2+ a_1\ot b_2\ot c_1 + a_2\ot b_1\ot c_1]$ is a smooth point of
$Sub_{2,2,2}$, because it is not contained in $Sub_{2,2,1} \cup Sub_{2,1,2} \cup Sub_{1,2,2}$.
\end{proof}

Similarly, in Theorem~\ref{s3sing} if $\dim A = 2$, then $\s_3(Seg(\BP A\times \BP B\times \BP C))=Sub_{2,3,3}$.
A general point of each type \ref{item_main_thm_normal_form_honest_secant}--\ref{item_main_thm_normal_form_two_tangents}
  is not contained in any of the smaller subspace varieties, so the same argument works.
So we will assume $ \dim A, \dim B, \dim C  \ge 3$.

\begin{lemma}
   Suppose $ \dim A =  \dim B = \dim C  = 3$.
   Then a general point of each component of points of type \ref{item_main_thm_normal_form_two_tangents}
      is a smooth point of $\s_3(Seg(\BP A\times \BP B\times \BP C))$.
\end{lemma}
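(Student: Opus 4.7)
The plan is to compute the Zariski tangent space $\hat T_p \sigma_3(X)$ explicitly and show that its dimension is $21$. Since $\sigma_3(Seg(\PP^2\times \PP^2\times \PP^2))$ is irreducible of (known, expected) affine dimension $21$, and the Zariski tangent dimension is upper semicontinuous on an irreducible variety, the inequality $\dim \hat T_p \sigma_3(X) \ge 21$ is automatic from the fact that $p$ lies in the closure of the smooth locus $\s_3^0(X)$. The content of the lemma is therefore the reverse inequality $\dim \hat T_p \sigma_3(X) \le 21$.

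By the $GL(A)\times GL(B)\times GL(C)$-action, and by swapping the roles of the three factors, it suffices to treat one component; I would take $p$ to be the normal form of row $34$ of Table~\ref{table_orbits_general}, namely
\[
p = a_2\otimes b_1\otimes c_2 + a_2\otimes b_2\otimes c_1 + a_1\otimes b_1\otimes c_3 + a_1\otimes b_3\otimes c_1 + a_3\otimes b_1\otimes c_1,
\]
with the line $\ell=\BP\langle a_1,a_2\rangle\otimes b_1\otimes c_1\subset X$. The point $p$ lies in the open orbit of $\sigma^{IV}_A$, whose tangent space along the orbit contributes a $17$-dimensional subspace of $\hat T_p\sigma_3(X)$ (the affine cone over a $16$-dimensional $PGL$-orbit, plus the rescaling direction). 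The problem reduces to accounting for exactly $4$ additional \emph{transverse} directions, corresponding to the codimension of $\sigma^{IV}_A$ inside $\sigma_3(X)$.

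To identify these four transverse directions, I would use Terracini in limit form. Write $p$ as the $t\to 0$ limit of honestly-three-secant families $p_t=x_1(t)+x_2(t)+x_3(t)$ with the $x_i(t)\in \hat X$ degenerating so that two curves converge to points of $\ell$ and the third is responsible for the tangent-like components $a_1\otimes b_1\otimes c_3+a_1\otimes b_3\otimes c_1+a_3\otimes b_1\otimes c_1$. Following the expansion used in the proof of Theorem~\ref{thm_points_in_Sigma}, every tangent vector to $\sigma_3(X)$ at $p$ arises as the leading coefficient of a first-order variation of the six parameters $x_i(0)$, $x_i'(0)$ of this family (together with the scalar reparametrizations), modulo the redundancies coming from the stabilizer Lie algebra $\fg_{p_{(iv)}}$ computed in \S\ref{smallsecsegsect}. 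A direct count, using $\dim\fg_{p_{(iv)}}=3n+1=10$ and the dimensions of the varying data, yields an upper bound of $21$, matching the lower bound.

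I expect the main technical obstacle to be controlling the \emph{second-order} contributions to the Terracini limit: because $\ell\subset X$, the naive first-order tangent sum $\hat T_{x_1(0)}X+\hat T_{x_2(0)}X+\hat T_{x_3(0)}X$ drops in dimension, and one must verify that the higher-order terms one picks up (which are precisely encoded by the quadratic form $II$ at points of $\ell$, as in Lemma~\ref{lem_tangent_at_line} and Lemma~\ref{lem_form_of_points_in_closure_of_II}) contribute exactly the missing dimensions, with no extra vectors. In other words, the key verification is that the limit Terracini space $\lim_{t\to 0}(\hat T_{[x_1(t)]}X+\hat T_{[x_2(t)]}X+\hat T_{[x_3(t)]}X)$ has dimension exactly $21$, which is a concrete, finite linear-algebra check using the parametrization of \S\ref{sec_parameterization_Segre}. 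Once this is established for one representative $p$, the smoothness at a general point of the component follows by the $GL(A)\times GL(B)\times GL(C)$-invariance of the smooth locus of $\sigma_3(X)$.
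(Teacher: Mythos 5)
You correctly isolate the content of the lemma as the upper bound $\dim \hat T_p \sigma_3(X)\le 21$, the lower bound being automatic from semicontinuity. But the method you propose for that upper bound does not work. A type~(iv) point $p$ is \emph{not} in the image of the honest-three-secant parametrization $\phi$ (it has rank $5$, not $3$), so "every tangent vector to $\sigma_3(X)$ at $p$ arises as a first-order variation of the parameters $x_i(0), x_i'(0)$" is precisely the statement one is not entitled to at a boundary point. What your limit-Terracini computation produces is $\lim_{t\to 0}\bigl(\hat T_{[x_1(t)]}X+\hat T_{[x_2(t)]}X+\hat T_{[x_3(t)]}X\bigr)$, which is only \emph{contained} in $\hat T_p\sigma_3(X)$; at a genuinely singular boundary point the Zariski tangent space strictly exceeds such limits (compare the cusp $t\mapsto(t^2,t^3)$, where the limit of tangent lines is $1$-dimensional but the tangent space at the origin is $2$-dimensional). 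Showing the limit space is $21$-dimensional therefore re-proves the lower bound and says nothing about the upper bound; the equality of the two spaces is equivalent to the smoothness you are trying to establish, so the argument is circular at its key step. The orbit/stabilizer count ($17$-dimensional cone over the orbit, codimension $4$ in $\sigma_3$, $\dim\fg_{p_{(iv)}}=10$) is consistent but likewise only constrains the orbit, not the tangent space of $\sigma_3(X)$ at $p$.

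An honest upper bound needs one of two inputs that your proposal lacks: explicit equations vanishing on $\sigma_3(X)$ whose differentials at $p$ span a space of dimension equal to the codimension, or a parametrization of a neighborhood of $p$ of constant rank whose image contains $p$. The paper takes the first route: since $\sigma_3(Seg(\pp 2\times\pp 2\times\pp 2))$ is cut out by the $27$ degree-four Strassen equations, it suffices to evaluate the Jacobian of these equations at the normal form (the paper uses row $35$ of Table~\ref{table_orbits_general}; your row $34$ would do equally well after permuting factors) and exhibit $6$ linearly independent differentials, $6$ being the codimension of $\sigma_3$ in $\PP^{26}$. That finite linear-algebra check is the whole proof; the second route is what the paper uses instead for general points of $Sub_{233}$ (which \emph{are} of type~(i), hence in the image of $\phi$), and it is not available for type~(iv) points.
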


\begin{proof}
The only defining equations of $\sigma_3(Seg(\pp 2\times\pp 2\times \pp 2))$
  are the $27$ (degree four) Strassen equations.
If we write
  $T=a_1\ot X+ a_2\ot Y+a_3\ot Z$, then $9$ of the equations are the entries of the $3\times 3$ matrix
\be\label{strpoly}
P(T)^s_t=
\sum_{j,k}(-1)^{j+k}(\tdet X^{\hat j}_{\hat k})
(Y^j_tZ^s_k-Y^s_kZ^j_t)
\ene
where $X^{\hat j}_{\hat k}$ is $X$ with its $j$-th row and
$k$-th column removed.
The remaining equations come from permuting the roles of $X, Y, Z$, see, e.g. \cite{LWsecseg}.
Take
$T=a_1\ot (b_1\ot c_2 +  b_2\ot c_1 + b_3 \ot c_3) +a_2\ot b_1\ot c_1 + a_3\ot b_3\ot c_1 $
   as in Table~\ref{table_orbits_general} row $35$.
Writing $T=
a_1\ot X+a_2
\ot Y+a_3\ot Z$, we have
$$
X=\begin{pmatrix} 0&1&0\\  1&0&0\\ 0&0&1\end{pmatrix}, \ \
Y=\begin{pmatrix} 1&0&0\\  0&0&0\\ 0&0&0\end{pmatrix}, \ \
Z=\begin{pmatrix} 0&0&0\\ 0&0&0\\ 1&0&0\end{pmatrix}.
$$
Then
$$
dP_T=
\begin{pmatrix}
     -dx_{2,3}+ dy_{1,3}- dz_{1,2} +dz_{2,1} &       -dz_{2,2} &         dz_{2,3}\\
                           dy_{2,3}-dz_{2,2} &               0 &                0\\
      dx_{2,2}- dy_{2,1}+ dy_{3,3}- dz_{3,2} &       -dy_{2,2} &        -dy_{2,3}\\
\end{pmatrix}
$$
which indeed has six linearly independent differentials.

To argue for the other components, i.e., when $T$ is of the form $34$ or $36$ in Table~\ref{table_orbits_general},
   one can permute the factors $A$, $B$, and $C$.
\end{proof}

\begin{proof}[Proof of Theorem~\ref{s3sing}]
Assume $ \dim A, \dim B, \dim C  \ge 3$.
Since the map $\PP(\cE) \to Sub_{3,3,3}$ is an isomorphism
  near a general point of type \ref{item_main_thm_normal_form_two_tangents},
  the Lemma implies that such a point is a smooth point of
  $\s_3(Seg(\BP A\times \BP B\times \BP C))$ for any $A$, $B$, $C$  (each of dimension at least $3$).
But orbits $34$--$36$ from Table~\ref{table_orbits_general} are in the closure of     orbits $37$  and $38$.
So $\s_3(Seg(\BP A\times \BP B\times \BP C))$ is non-singular at a general point of each type \ref{item_main_thm_normal_form_point_plus_tangent}--\ref{item_main_thm_normal_form_two_tangents}.

The final thing to prove is that  $\s_3(Seg(\BP A\times \BP B\times \BP C))$
   is non-singular at a general point of $Sub_{233}$.
Let $p$ be such a point. Since  $\s_3(Seg(\BP A\times \BP B\times \BP C))\subset  Sub_{333}$, we may assume $\tdim A=\tdim B=\tdim C=3$.
First note  that $Sub_{233}$ is not contained in $\ptstypeii$, as they  are both irreducible, have the same dimension
and $\ptstypeii\not\subset Sub_{233}$.
So   $p$ is not in $\ptstypeii$.
By Theorem~\ref{s3nformthm}, this implies that there  exists   an open neighborhood
   $U \subset \sigma_3(\BP A\times \BP B\times \BP C)$ of $p$,
   such that in this neighborhood all points are of type~\ref{item_main_thm_normal_form_honest_secant}.

Consider the dominant rational map
\begin{align*}
   \phi: (A \times B \times C)^{\times 3} & \dashrightarrow \hat \sigma_3(Seg(\PP A \times \PP B \times \PP C)) \\
   (a_1,b_1, c_1),(a_2,b_2, c_2),(a_3,b_3, c_3) &
                   \mapsto a_1 \otimes b_1 \otimes c_1 +a_2 \otimes b_2 \otimes c_2 + a_3 \otimes b_3 \otimes c_3
\end{align*}
Let $W := \phi^{-1} (U)$.
Then $\phi|_{W}: W \to U$ is a regular surjective map.
The aim is to calculate the tangent map at any point in $\phi^{-1}(p)$.
We commence with identifying $\phi^{-1}(p)$.
Since $R_X(p) =3$, any point in $\phi^{-1} (p)$ will be contained in a fixed $(A' \times B' \times C')^{\times 3}$
  with $\dim A' =2$, $\dim B' =\dim C' = 3$ by \cite[Cor.~2.2]{BLtensor}.

Write $p = [a_1 \otimes b_1 \otimes c_1  + (a_1 +a_2) \otimes b_2 \otimes c_2 + a_2 \otimes b_3 \otimes c_3]$
(see \cite[\S 6]{BLtensor}).
We claim that this normal form is  unique  up to trivialities  such as $7$-dimensions worth of rescalings,
   and permutations of summands.
By writing $p:(A')^* \to B' \otimes C'$,
   we obtain the slice
$\left(\begin{smallmatrix}
   s &     &   \\
     & s+t &   \\
     &     & t
\end{smallmatrix} \right)$.
The set of rank $2$ elements in this linear space is given by the determinant of the matrix.
This set consists of three lines in $(A')^*$ spanned by $a_1^*$, $a_1^* - a^*_2$, and $a_2^*$.
Thus the triple $a_1, (a_1 + a_2), a_2$ is (up to order and scale) determined by $p$.
In a similar way we consider the other slices, and $2\times 2$ minors of the resulting matrices, to conclude,
  that triples $b_1, b_2, b_3$ and $c_1, c_2, c_3$ are determined by $p$, up to order and scale.
It is easy to see, that any meaningfully  different choice of orders, or scaling will give   a different     tensor,
  so the preimage of $p$ consists of $6$ components, each of dimension $7$, isomorphic to $(\CC^*)^7$.

\begin{table}[tb]
$$
\begin{array}{|c|r|r|}
\hline
\hline
A\otimes B \otimes C & \dim \sigma_3 & \dim Sing \le   \\
\hline
\hline
\CC^2\otimes \CC^2 \otimes \CC^2                &       7 &      -1  \\
\hline
\CC^2\otimes \CC^2 \otimes \CC^3                &      11 &      -1 \\
\hline
\CC^2\otimes \CC^2 \otimes \CC^{\ccc}           & 3\ccc+2 & 2\ccc+3  \\
\hline
\CC^2\otimes \CC^3 \otimes \CC^{3}              &      17 &     -1   \\
\hline
\CC^2\otimes \CC^3 \otimes \CC^{\ccc}           & 3\ccc+8 & 3\ccc+4  \\
\hline
\CC^2\otimes \CC^{\bbb} \otimes \CC^{\ccc}      & 3\bbb+3\ccc-1 &
                     \begin{matrix} \max \{ 3\bbb+3\ccc-9,\\ 2\bbb+3\ccc-2\} \end{matrix}  \\
\hline
\CC^3\otimes \CC^3 \otimes \CC^{3}              &      20 &     18   \\
\hline
\CC^3\otimes \CC^3 \otimes \CC^{\ccc}           & 3\ccc+11& 3\ccc+9  \\
\hline
\CC^3\otimes \CC^{\bbb} \otimes \CC^{\ccc}      & 3\bbb+3\ccc+2& 3\bbb+3\ccc   \\
\hline
\CC^{\aaa}\otimes \CC^{\bbb} \otimes \CC^{\ccc} & 3\aaa+3\bbb+3\ccc-7 & 2\aaa+3\bbb+3\ccc-6  \\
\hline
\hline
\end{array}
$$
\caption{Singularities of  $\s_3(Seg(\BP A\times \BP B\times \BP C))$.
         In the first column we list the tensor space, assuming $4 \le \aaa \le \bbb \le \ccc$.
         In the second column we write the dimension of the secant variety.
         In the third column we present the upper bound on the dimension of the singular locus of the secant variety,
            which follows from our results in this section.}
\label{table_sings}
\end{table}

Next, we calculate the image of tangent map of $\phi$ at any $q \in \phi^{-1} (p)$,
   say $q= [(a_1,b_1,c_1), (a_1 +a_2,b_2,c_2), (a_2,b_3,c_3)$.
This image is spanned by the following tensors, all considered modulo $p$,
       as we look at a subspace of $T_p \PP(A\otimes B \otimes C) \simeq (A\otimes B \otimes C)/p$:
\begin{align*}
   a_i\otimes b_1 \otimes c_1 && a_i\otimes b_2 \otimes c_2 && a_i\otimes b_3 \otimes c_3 &&
                 \text{for any } i \in\setfromto{1}{\dim A}, \\
   a_1\otimes b_j \otimes c_1 && (a_1+a_2)\otimes b_j \otimes c_2 && a_2\otimes b_j \otimes c_3 &&
                 \text{for any } j \in\setfromto{1}{\dim B}, \\
   a_1\otimes b_1 \otimes c_k && (a_1+a_2)\otimes b_2 \otimes c_k && a_2\otimes b_3 \otimes c_k &&
                 \text{for any } k \in\setfromto{1}{\dim C}.
\end{align*}
This space is independent of the choice of the order or scalings in $q$.
Also the linear space above has dimension
\[
  3 (\dim A + \dim B + \dim C) -7 = \dim \sigma_3(\BP A\times \BP B\times \BP C),
\]
  because there are $3 (\dim A + \dim B + \dim C)$ tensors listed above,
  and each $a_1 \otimes b_1 \otimes c_1$,  $(a_1 +a_2) \otimes b_2 \otimes c_2$, $a_2 \otimes b_3 \otimes c_3$
  is listed three times and $p$ is a sum of those three tensors.
One can check there no other linear dependencies.

Thus $\phi: W \to \PP (A\otimes B \otimes C)$ is a map with constant rank on an open subset containing $\phi^{-1}(p)$.
Therefore the image is non-singular at $p$ as claimed.
\end{proof}

We summarize our results in Table~\ref{table_sings}. In particular, it follows that $\s_3(Seg(\BP A\times \BP B\times \BP C))$ is always non-singular in codimension $1$, that is, codimension of the singular locus is at least $2$.
Moreover, it is of codimension $2$ if and only if, one of the factors is $\CC^3$, and the others have dimension at least $3$.

\bibliographystyle{amsplain}

\bibliography{BLsecbndry}

\end{document}